\numberwithin{equation}{section}
\newtheorem{theorem}{Theorem}[section]
\newtheorem{proposition}[theorem]{Proposition}
\newtheorem{lemma}[theorem]{Lemma}
\newtheorem{follow}[theorem]{Corollary}
\newtheorem{pr}[theorem]{Proposition}
\theoremstyle{definition}
\newtheorem{remark}[theorem]{Remark}
\newcommand{\bel}{\begin{equation} \label}
\newcommand{\ee}{\end{equation}}
\newcommand{\ba}{\begin{array}}
\newcommand{\ea}{\end{array}}
\newcommand{\Z}{{\mathbb Z}}
\newcommand{\R}{{\mathbb R}}
\newcommand{\N}{{\mathbb N}}
\newcommand{\C}{{\mathbb C}}
\newcommand{\re}{{\mathbb R}}
\begin{document}
\begin{center}{\Large \bf Threshold Singularities of the Spectral Shift Function for a Half-Plane Magnetic  Hamiltonian}

\medskip

{\sc Vincent Bruneau\footnote{ 
Institut de Math\'ematiques de Bordeaux,
UMR 5251 du CNRS,
Universit\'e de Bordeaux,
351 cours de la Lib\'eration,
33405 Talence Cedex, France. 
e-mail: vbruneau@math.u-bordeaux.fr}  and  Pablo Miranda\footnote{
Departamento de Matem\'atica y Ciencia de la Computaci\'on, Universidad de Santiago de Chile, Las Sophoras 173. Santiago, Chile. e-mail: pablo.miranda.r@usach.cl}}

\medskip
\medskip\medskip
\end{center}

\abstract We consider the Schr\"odinger operator  {with} constant magnetic field defined  on the  half-plane with {a} Dirichlet boundary condition, $H_0$, and a decaying electric perturbation $V$. We analyze the spectral density near {the Landau levels}, {which  are} thresholds 
in the spectrum {of $H_0,$}  by studying the  Spectral Shift Function (SSF) associated  to the pair $(H_0+V,{H_0})$. For perturbations  of {a} fixed sign{,}  we {estimate}
the SSF in terms of the eigenvalue counting function {for}  certain compact operators. 
If  {the decay of} $V$ is   power-like, then using {p}seudodifferential analysis, we {deduce} that   there are  singularities at the thresholds and we obtain the corresponding asymptotic behavior {of the SSF}.   
Our {technique gives} also results for the Neumann  boundary condition.

$ $

{Keywords: Magnetic Schr\"odinger operators; {Half-plane}; Dirichlet and Neumann boundary conditions; Spectral Shift Function; Pseudodifferential Calculus.}

\medskip

Mathematics Subject Classification 2010: 35P20, 35J10, 47F05, 81Q10.


\section{Introduction}\setcounter{equation}{0}
\label{s1}

\subsection*{Motivations}
We consider $H_0^D$ (resp., $H_0^N$),  the Dirichlet (resp.,  Neumann) self-adjoint realization of the magnetic Schr\"odinger operator
\bel{Hk}
    -\frac{\partial^2}{\partial x^2} +
    \left(-i\frac{\partial}{\partial y} - bx\right)^2, \quad b>0,
    \ee
    in the half-plane $\R_+\times\R$ {($\R_+:=(0,\infty)$)}.

Our goal is to analyze the effect{s on the spectrum when a relatively compact perturbation of $H_0^D$ or  $H_0^N$ is introduced.   The perturbations under consideration  will be}   real electric potentials $V$ {that} {decays {to zero} at infinity} in $\R_+\times\R$.

Such {effect} is now well understood for perturbations of the so-called {\it Landau Hamiltonian} $H_L$ i.e.{,} the  magnetic Schr\"odinger operator \eqref{Hk} {but defined for  the whole plane} $\R^2$.  
The {Landau Hamiltonian} admits pure point spectrum with eigenvalues of infinite multiplicity  (the so called {\it Landau levels} ${\mathcal E}_j$, $j \in \N$
). It is established that perturbations by a {decaying}  electric potential of {a} definite sign, even if it is compactly supported,  produce {an accumulation} of { discrete} eigenvalues {around} the Landau levels (see  \cite{rai1, ivrii2, raiwar, melroz, filpush, rozta2}).  {Using variational methods, it can be seen that} the distribution of these eigenvalues is governed by the counting function of the eigenvalues of the {compact} Toeplitz operators $P_jVP_j$, where $P_j$ is the spectral projection on Ker$(H_L- {\mathcal E}_j)$. Then, depending on the {decay} rate  of $V${,}  it is possible to obtain {the} asymptotic behavior of the counting functions of {the} eigenvalues {of $H_L+V,$} near each Landau Level.  
{Tools from} pseudodifferential {analysis}, {together with} variational and Tauberian  {methods}  have been used to obtain this behavior for V: power-like decaying \cite{rai1, ivrii2},  exponentially decaying \cite{raiwar}, compactly supported \cite{raiwar, melroz, filpush, pushroz2, roz}. Also 
magnetic \cite{rozta2} and {geometric}  perturbations \cite{pushroz1, per, GoKaPe13} have been considered.


In our case, on the half-plane, the spectrum of $H_0^D$ (resp. $H_0^N$) is {rather} different {from that} of $H_L$. It is {purely absolutely} continuous, given by $\sigma (H_0^D)=[b , \infty)$ (resp. $\sigma (H_0^N)=[\Theta_0, \infty)$, $0< \Theta_0<b$). From the dynamical point of view, {this difference is related to the fact that} in $\R^2$ the classical trajectories are circles,  while in $\R_+\times \R$ there {exist}  propagation phenomena along the boundary $\{0\}\times \R$. The {accumulation of the} discrete spectrum 
of $H_0^D+V$ and of $H_0^N+V{,}$ for compactly supported potentials $V{,}$ {was} studied in \cite{bmr2}{.}  However,  to our best knowledge, there {are} no results concerning the continuous spectrum.

A natural tool to {extend {this}  notion of} spectral densit{y}, from the discrete spectrum into the continuous spectrum, is the {s}pectral {s}hift {f}unction (SSF)  {(see \eqref{jun_24_16} and \eqref{15_sep_16} below).}
For example, {this function is studied} for the  Schr\"odinger operator with constant  magnetic field in $\R^3$, which {has} purely absolutely continuous  {spectrum}, and  it is proved that it admits  singularities at the Landau levels (see \cite{ferrai, BoBrRa14_02} and   other {magnetic} examples in  \cite{bsoccrai, rai3, tie}).  {The operators considered in these works  admit an analytically fibered decomposition, and the singularities of the SSF are present at the minima   of the corresponding band functions. Is is important  to notice that in all these models, the minima of the band functions are non-degenerate. 
These points are called thresholds, because they are points where  the {Lebesgue} multiplicity of the {a.c.} spectrum  changes. }

From a general point of view, it is known that  the extrema of the band functions play a significant role {in the description of the  spectral properties of fibered  operators}(see \cite{GeNi}). In the particular case where these extrema  are reached and   non{-}degenerate,   there is a well known procedure to obtain effective Hamiltonians that  allows to describe the distribution of eigenvalues (as in \cite{rai2, bmr2})    and the singularities of the SSF (see  \cite{bsoccrai}). 

{Our operator $H_0$ also admits an analytically fibered decomposition (see  \eqref{sant1} below), and as we will see, the singularities of the spectral shift function are  present at the infima of the band functions as well. However, the  nature of these infima  is completely different from the ones mentioned above. They are not reached, they are  the limits of the band functions at infinity. The derivative is zero only at infinity.} This is the source of one of the main technical difficulties  that we have to {overcome} in order to  describe the {properties} of the SSF.




{Since our extremal points are only the limits on the band functions, it is necessary to modify the analysis of previous works.} The  phenomenon of {thresholds given by limits of the band functions} is also present  for some quantum Hall effect models (see \cite{bmr1}) and for  some Iwatsuka models (see \cite{mir}). 
 In  these works, {the SSF}  was studied only  {in the region  where} it  counts the number of  discrete eigenvalues. Similar results are expected {to hold also inside the continuous spectrum}, but it requires a more precise description of  some Birman-Schwinger operators{,} {e}specially when the considered energy levels cross the {corresponding} band functions. 

\subsection*{Dirichlet magnetic Schr\"odinger operator on the half-plane}

In the paper, we will focus on the Dirichlet realization of magnetic Schr\"odinger operator \eqref{Hk} and write $H_0$ instead of  $H^D_0$. {We will discuss the} Neumann boundary condition at {the end} of  {Section} \ref{s2}. 

 The operator $H_0$ is  generated in $L^2(\R_+\times\R)$  by the closure of the quadratic form
$$\int_{\R_+\times\R}\left(\left|\frac{\partial u}{\partial x}\right|^2+\,\left|i\frac{\partial u}{\partial y}+bxu\right|\right)^2dy\,dx, {\quad b>0, }$$
defined originally  on $C_0^\infty (\R_+\times\R)$. 
This is the Hamiltonian of a 2D spinless nonrelativistic quantum {particle}  in a half-plane, subject to a constant  magnetic field of scalar intensity $b$.

Let ${\mathcal F}$ {be} the partial Fourier transform with respect to
$y$, i.e.
$$
({\mathcal F}u)(x,k) = (2\pi)^{-1/2} \int_{\re} e^{-iyk} u(x,y)\,{dy}, \quad (x,k) \in \re_+ \times \re.
$$
Then, we have the identity
\bel{sant1}
{\mathcal F} H_0 {\mathcal F}^* = \int_\R^{\oplus} h(k) \,dk,
\ee
where the operator $h(k)$ is the Dirichlet realization in $L^2(\R_+)$ of
$$
 - \frac{d^2}{dx^2} + (bx-k)^2, \quad k \in \R.
$$
{The domain of the operator $h(k)$ is:
\bel{24apr17}
D(h(k))=\{f\in {\rm H}^2(\R_+);x^2f {\in} L^2(\R_+)\}\cap {\rm H}_0^1(\R_+).
\ee}{Note that} it does not depends on $k$.
{Also}, the family    ${\{}h(k){\}}_{k\in\R}$, is {real} analytic in the sense of Kato  \cite{kato}{, and} for each $k \in \R$ the operator $h(k)$  has a discrete and simple spectrum. Let
$\left\{E_j(k)\right\}_{j=1}^{\infty}$ be the increasing sequence of
the eigenvalues of $h(k)$. For $j\in \N$, the function $E_j(\cdot)$  is called the $j$-th \emph{band function}. By {the} Kato
analytic perturbation theory, $E_j(k)$  is a real analytic functions of $k \in \R$. Further, it is proved in  \cite{bp} (see also {\cite[Chapter 15.A]{ivrii2}}), that for  any $j\in \N$, the band function $E_j$ is strictly decreasing, and
\bel{oct9}
\lim_{k\to -\infty}E_j(k)=\infty, \quad \lim_{k\to \infty}E_j(k)=b(2j-1)=\mathcal{E}_j.
\ee
In consequence, the spectrum of $H_0$ 
is purely absolutely continuous and is given by $$
\sigma(H_0)=\bigcup_{j\in \N}[\mathcal{E}_j,\infty) = [b,\infty).
$$
Using decomposition \eqref{sant1} and the monotonicity of $E_j$, it is possible to see that the {Lebesgue} multiplicity of the {a.c.} spectrum of $H_0$ changes at any point in the set $\{{\mathcal E}_{j}\}_{j=1}^\infty$. {Such  points are called} \emph{thresholds} in  $\sigma(H_0)$.


\subsection*{Perturbation and Spectral Shift Function}

Suppose that the electric potential $V : \re_+ \times \re \to \re$ is a Lebesgue measurable {function} {that} satisfies 
    \bel{cd2}
    |V(x,y)|\leq C \,{\langle x,y \rangle  ^{-m}} 
, \quad (x,y) \in \R_+\times \R,
\ee  for  some  positive constant $C$, $m > 2$, and $\langle x, y \rangle:=(1+x^2+y^2)^{1/2}$. On the domain of $H_0$ introduce the operator 
$$H: = H_0 + V,$$ self-adjoint in $L^2(\re_+ \times \re)$.
Estimate \eqref{cd2} combined  {with} the diamagnetic inequality in the half-plane {(see \cite{hlmw})} imply that for any real $E_0 < \inf \sigma(H)$ the operator
 $|V|^{1/2} (H_0 - E_0)^{-1}$ is Hilbert--Schmidt, and hence the resolvent difference $(H-E_0)^{-1}-(H_0-E_0)^{-1}$ is
a trace-class operator. In particular,  the {absolutely continuous} spectrum of $H$ coincides with $[{\mathcal E}_1,\infty)$. Furthermore, 
there exists a unique function $\xi = \xi(\cdot; H, H_0) \in L^1(\re; (1+E^2)^{-1}dE),$ called the
spectral shift function  for the operator pair $(H,H_0)$, that satisfies the Lifshits-Kre\u{\i}n trace formula
$$\mbox{Tr}(f(H)-f(H_0))=\int_\R \xi(E;H,H_0) f'(E)dE, $$
for each $f \in C_0^\infty(\R)$, and vanishes identically in $(-\infty, \inf \sigma(H))$ \cite{yaf}.

In scattering theory, the SSF can be seen as  the scattering phase of the operator pair $(H,H_0)$, namely  we have the  Birman-Kre\u{\i}n formula
$${\rm det}(S(E))=e^{-2\pi i\xi(E)}, \quad E \in [b,\infty) \, {\rm a.e.}, $$ 
where $S(E)$ is the scattering matrix of the operator pair $(H,H_0)$. In addition, for almost every $E<b=\mathcal{E}_1$ the SSF coincides with the eigenvalue counting function of the operator $H$, i.e. 
\bel{jun_24_16}\xi(E;H,H_0)=-\,{\rm Tr} \,{\bf 1}_{(-\infty,E)}(H),\ee
where ${\bf 1}_\Omega$ denotes the characteristic function of the Borel set $\Omega\subset\R$. 

\vspace*{.5cm}

In this article, for $V$ of {a} definite sign, we investigate  the properties of the  SSF  $\xi(E;H,H_0)$, {particularly its behavior} near the Landau levels. 
Using the Pushnitski representation formula of the SSF, we will reduce our analysis to the study of counting function of some parametrized compact operators (see Theorem \ref{firsteh}).
To prove these  results, in Section \ref{SecThm1} we will describe precisely {some properties of} a Birman-Schwinger operator for which a careful analysis of the band functions and of {their} derivatives {is} necessary.  As a consequence, in Corollary \ref{coro1}, we {can} show that the SSF is bounded on compact sets not containing the thresholds $\{{\mathcal E}_j\}$. Then, in Section \ref{SecThm2}, assuming {that} $V$ {is} smooth and {admits a} power-like {decay} at infinity, we will prove  our main result Theorem \ref{V_power}. It gives the asymptotic behavior of $\xi(E;H,H_0)$ as the energy $E$ approaches the singularit{y} present at the  spectral threshold $\mathcal{E}_j$. {This result is} proved using {pseudodifferential} {methods}. 

%

\section{Main results} \label{s2}
%
%
%

\subsection{Reduction {to a counting function for a compact operator}} \label{ss23}

Our first main result concerns an effective Hamiltonian that permits us to estimate the behavior of the SSF for a wide class of non-negative potentials $V$. The effective Hamiltonian is given by the real part {of a limit} of the trace class operator in \eqref{sof1} below. To describe this operator  we need to introduce some notations, which will be used {systematically in the sequel.}

Fix $k \in\re$ and $j \in {\mathbb N}$. Denote by $\pi_j(k)$ the one-dimensional
orthogonal projection onto ${\rm Ker}\,(h(k) - E_j(k))$. Then
    \bel{lau5}
    \pi_{j}(k)  =\big| \psi_{j}(\cdot;k) \big\rangle \big\langle
\psi_{j}(\cdot;k)\big|,    \ee
    where
 $\psi_j(x;k)$, $x \in \re_+$, is an eigenfunction
of  $h(k)$ that  satisfies
    \bel{defpsij}
h(k) \psi_j(\cdot;k) = E_j(k) \psi_j(\cdot;k), \quad \psi_j(0;k) = 0, \quad
\|\psi_j(\cdot;k)\|_{L^2(\re_+)} = 1.
    \ee
Moreover,  $\psi_j(\cdot;k)$ could be chosen to be real-valued, and {since the family $\{h(k)\}_{k\in\R}$ is analytic}, it can be chosen analytic as a function of $k$ . {The projection $ \pi_{j}$ is analytic with respect to $k$ as well.}

Fix $j \in {\mathbb N}$. For $z \in {\mathbb C}_+ : = \{\zeta \in {\mathbb C} \, | \, {\rm Im}\,\zeta > 0\}$ define
    \bel{sof1}
T_j(z):= |V|^{1/2}{\mathcal F}^* \int^\oplus_\R(E_j(k)-z)^{-1}\pi_j(k)\,dk\,{\mathcal F}|V|^{1/2}.
    \ee
    By Proposition \ref{fom} below, if $V$ satisfies \eqref{cd2}, the limit $\lim_{\delta \downarrow 0}T_j(E+i\delta):=T_j(E)$ exists in the {trace class}-norm for energies $E \in \re$, $E \neq {\mathcal E}_j$.

{For a compact self-adjoint operator $A$, {define} the eigenvalue counting function}
\bel{defn}
n_{\pm}(s; A) : = {\rm Tr}\,{\bf 1}_{(s,\infty)}(\pm A), \quad s>0.
\ee
\begin{theorem}\label{firsteh}
\noindent Assume that $V\geq 0$ satisfies \eqref{cd2}, and write  $H_\pm=H_0\pm V$. For all $j \in  \N$ and  all $r \in (0,1)$ we have
 \bel{pr41}\ba{ll}
    &n_{\mp}(1+r;{\rm Re}\,T_j(\mathcal{E}_j+\lambda))+O(1)  
\\[.5em] \leq &
\pm \xi(\mathcal{E}_j+\lambda;H_\pm,H_0) \\[.5em]
\leq &
n_{\mp}(1-r; {\rm Re}\,T_j(\mathcal{E}_j+\lambda))+O(1),
    \ea\ee
    as $\lambda \to 0$. \end{theorem}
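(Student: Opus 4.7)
The strategy is to combine Pushnitski's representation of the SSF for sign-definite perturbations with a band-by-band decomposition of the sandwiched resolvent $T(z) := |V|^{1/2}(H_0-z)^{-1}|V|^{1/2}$, so as to isolate the singular contribution of the $j$-th band near $\mathcal{E}_j$. The starting point is the Pushnitski representation
$$\pm \xi(E;H_\pm,H_0) = \frac{1}{\pi}\int_\R n_\mp\bigl(1;\,\mathrm{Re}\,T(E+i0) + t\,\mathrm{Im}\,T(E+i0)\bigr)\,\frac{dt}{1+t^2}, \qquad V \geq 0,$$
valid for a.e.\ $E$, the trace-class condition on the resolvent difference having already been verified via the diamagnetic inequality.

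Using the fiber decomposition \eqref{sant1} together with the spectral representation $(h(k)-z)^{-1} = \sum_{i\geq 1}\pi_i(k)/(E_i(k)-z)$, I would write $T(z) = T_j(z) + R_j(z)$ with $R_j(z) := \sum_{i \neq j}T_i(z)$. For $\lambda$ small, bands with index $i > j$ satisfy $E_i(k) > \mathcal{E}_i > \mathcal{E}_j+\lambda$ by \eqref{oct9} and contribute analytically through $E = \mathcal{E}_j + \lambda$; bands with $i < j$ each cross this level transversally at a single finite point (by the strict monotonicity of $E_i$), producing a bounded term. Proposition \ref{fom}, combined with a summability control on the high-band tail, should give $R_j(\mathcal{E}_j + \lambda) \to R_j(\mathcal{E}_j)$ in trace-norm as $\lambda \to 0$.

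A Sokhotski--Plemelj computation then shows that $\mathrm{Im}\,T(\mathcal{E}_j+\lambda)$ is a sum of at most $j$ rank-one operators, one at each crossing point $k_i(\lambda)$ of $E_i$ with $\mathcal{E}_j+\lambda$. Thus $\mathrm{rank}\,\mathrm{Im}\,T(\mathcal{E}_j + \lambda) \leq j$ \emph{uniformly} in $\lambda$ small, even though the scalar weight $1/|E_j'(k_\lambda)|$ coming from the $j$-th term blows up (since $E_j'(k)\to 0$ at $+\infty$). Applying the rank-stability bound $|n_\mp(s;A+K)-n_\mp(s;A)| \leq \mathrm{rank}(K)$ with $K = t\,\mathrm{Im}\,T$ inside the Pushnitski integral, together with the integrable weight $dt/(1+t^2)$, one obtains
$$\pm\xi(\mathcal{E}_j+\lambda;H_\pm,H_0) = n_\mp\bigl(1;\,\mathrm{Re}\,T(\mathcal{E}_j+\lambda)\bigr) + O(1), \qquad \lambda \to 0.$$

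Finally, to trade $\mathrm{Re}\,T$ for $\mathrm{Re}\,T_j$ I would use the Weyl-type inequality $n_\mp(s_1+s_2;\,A+B) \leq n_\mp(s_1;\,A) + n_\mp(s_2;\,B)$ twice: once with $A = \mathrm{Re}\,T_j$, $B = \mathrm{Re}\,R_j$, $s_1 = 1-r$, $s_2 = r$ to get $n_\mp(1;\mathrm{Re}\,T) \leq n_\mp(1-r;\mathrm{Re}\,T_j) + O(1)$; and once with $A = \mathrm{Re}\,T$, $B = -\mathrm{Re}\,R_j$, $s_1 = 1$, $s_2 = r$ to get $n_\mp(1+r;\mathrm{Re}\,T_j) \leq n_\mp(1;\mathrm{Re}\,T) + O(1)$. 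Both $O(1)$ errors are instances of $n_\mp(r;\pm\mathrm{Re}\,R_j(\mathcal{E}_j+\lambda))$, bounded as $\lambda \to 0$ by the trace-norm continuity of $R_j$. Combining these bounds yields \eqref{pr41}. The main obstacle I expect lies in the uniform trace-norm control of $R_j(\mathcal{E}_j+\lambda)$ at the threshold: the contributions from bands with $i \leq j$ are handled by explicit finite calculations, but the infinite tail $\sum_{i > j}T_i(z)$ requires uniform summability estimates that rely on the detailed spectral information on $h(k)$ provided by Proposition \ref{fom}.
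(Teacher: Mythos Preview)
Your overall architecture matches the paper's proof exactly: Pushnitski's formula, the uniform rank bound $\mathrm{rank}\,\mathrm{Im}\,T(\mathcal{E}_j+\lambda)\le j$ to kill the $t$-dependence, and Weyl inequalities to peel off the remainder from $\mathrm{Re}\,T_j$. The one substantive difference, and the place where your proposal has a genuine gap, is the treatment of the high-band tail.

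You write $R_j=\sum_{i\neq j}T_i$ and propose to obtain trace-norm continuity of $R_j$ at $\mathcal{E}_j$ by ``summability control on the high-band tail'' via Proposition~\ref{fom}. This does not work as stated. Proposition~\ref{fom} yields $\|{\rm Re}\,T_l(\mathcal{E}_j)\|_1=O(|\mathcal{E}_l-\mathcal{E}_j|^{-1})=O((l-j)^{-1})$, and summing over $l>j$ gives a divergent harmonic series; so the naive band-by-band trace-norm bound cannot close. The paper avoids this entirely: it splits $R_j$ into $T_j^-=\sum_{l<j}T_l$ (finitely many terms, each handled by Proposition~\ref{fom}) and the tail $T_j^+=V^{1/2}P_j^+(H_0-z)^{-1}P_j^+V^{1/2}$, treated as a \emph{single} projected resolvent rather than a sum of bands. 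Lemma~\ref{posipart} then gives continuity of $T_j^+$ across $\mathcal{E}_j$ only in the Hilbert--Schmidt norm, via the elementary estimate $\|T_j^+(E)\|_2\le \|V^{1/2}\|\,\|H_0^{-1}V^{1/2}\|_2\,\mathcal{E}_{j+1}/(\mathcal{E}_{j+1}-E)$.

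The second point you should adjust is that trace-norm continuity is not needed to conclude $n_\mp(r;\mathrm{Re}\,R_j(\mathcal{E}_j+\lambda))=O(1)$. Compactness together with \emph{operator-norm} convergence suffices: by the Weyl inequality, $n_\pm(r;T_j^+(\mathcal{E}_j+\lambda))\le n_\pm(r/2;T_j^+(\mathcal{E}_j))+n_\pm(r/2;T_j^+(\mathcal{E}_j+\lambda)-T_j^+(\mathcal{E}_j))$, and the second term vanishes once the norm of the difference drops below $r/2$. This is exactly how the paper proceeds. So the fix is simple---replace your band-summation of the tail by the projected-resolvent argument of Lemma~\ref{posipart} and downgrade ``trace-norm continuity'' to ``norm continuity of a compact operator''---but without it the argument is incomplete at precisely the point you flagged as the main obstacle.
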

\noindent The proof of Theorem \ref{firsteh} is {contained} in Section \ref{s4}.\\

{Arguing as in  the proof Theorem \ref{firsteh} we have the following: }
\begin{follow}\label{cor1}
Suppose that  $V\geq0$ satisfies \eqref{cd2}. Then, on any compact set $\mathcal{C}\subset \R\setminus\{{\mathcal E}_l\}_{l=1}^\infty$, 
$$\sup_{E\in{\mathcal C}}\xi(E;H_\pm,H_0)<\infty,$$
i.e.  the SSF is bounded  away from the thresholds.

\end{follow}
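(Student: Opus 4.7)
The plan is to rerun the argument of Theorem \ref{firsteh} while keeping the energy $E$ uniform over the compact set $\mathcal{C}$, rather than letting it approach a single threshold. As in that proof, the starting point is the Pushnitski representation formula, which controls $\pm \xi(E; H_\pm, H_0)$ by integrals of counting functions $n_\mp(s; \operatorname{Re} T(E) + t \operatorname{Im} T(E))$, where $T(E)$ is the limit from the upper half-plane of the Birman--Schwinger operator $|V|^{1/2}(H_0 - z)^{-1} |V|^{1/2}$. Using the fibered decomposition \eqref{sant1} and the spectral resolution of each fiber $h(k)$, this operator decomposes as $T(E) = \sum_{j \geq 1} T_j(E)$, with $T_j(E) = \lim_{\delta \downarrow 0} T_j(E+i\delta)$ defined from \eqref{sof1}.

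The key observation is that for $\mathcal{C} \subset \R \setminus \{\mathcal{E}_l\}_{l=1}^\infty$ compact, one has $d := \operatorname{dist}(\mathcal{C}, \{\mathcal{E}_l\}_{l=1}^\infty) > 0$. For each fixed $j$ the map $E \mapsto T_j(E)$ is continuous from $\mathcal{C}$ into the trace class: for $j$ with $\mathcal{E}_j \notin [\inf \mathcal{C}, \sup \mathcal{C}]$ this is straightforward because the denominator $E_j(k) - E$ stays bounded below uniformly in $k \in \R$ and $E \in \mathcal{C}$ (using the monotonicity of $E_j$ and \eqref{oct9}); for the finitely many $j$ with $\mathcal{E}_j$ in that interval, which by hypothesis do not lie in $\mathcal{C}$, continuity up to $E \neq \mathcal{E}_j$ is exactly what Proposition \ref{fom} provides. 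Compactness of $\mathcal{C}$ then yields a uniform trace-norm bound $\|T_j(E)\|_1 \leq C_j$ for $E \in \mathcal{C}$.

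Summing in $j$, the same estimates used in Proposition \ref{fom} to define $T(E)$ as a trace-class operator at a single energy now produce $\sup_{E \in \mathcal{C}} \|T(E)\|_1 < \infty$: the uniform distance $d > 0$ plays the role of the safety margin that made the single-energy limit exist. Substituting this into the Pushnitski formula together with the elementary inequality $n_\mp(s; A) \leq s^{-1} \|A\|_1$ converts the trace-norm bound on $\operatorname{Re} T(E)$ and $\operatorname{Im} T(E)$ into the desired uniform bound $\sup_{E \in \mathcal{C}} \xi(E; H_\pm, H_0) < \infty$.

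The main obstacle is precisely the uniform summability in $j$ used in the previous paragraph: one must verify that the constants $C_j$ inherited from Proposition \ref{fom} can be chosen summable, uniformly in $E \in \mathcal{C}$. This is expected to follow by rereading the proof of Proposition \ref{fom}, since the estimates there depend on the distance from $E$ to the thresholds, and this distance is $\geq d > 0$ throughout $\mathcal{C}$; nevertheless, it is the one step where one must work with the proof of Proposition \ref{fom} rather than merely quote its statement.
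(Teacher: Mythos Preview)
Your overall strategy—Pushnitski formula, band decomposition, uniform bounds on the compact set—is the right one, and matches the paper in spirit. But two concrete steps do not go through as you have written them, and the paper handles both differently.

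\textbf{The $t$-integration.} From \eqref{pssf} you must bound $\int_\R n_\mp(1;\operatorname{Re} T(E)+t\,\operatorname{Im} T(E))\frac{dt}{1+t^2}$. Applying $n_\mp(1;A)\leq\|A\|_1$ gives an integrand dominated by $\|\operatorname{Re} T(E)\|_1+|t|\,\|\operatorname{Im} T(E)\|_1$, and $\int_\R\frac{|t|}{1+t^2}\,dt$ diverges. A trace-norm bound on $\operatorname{Im} T(E)$ is therefore not enough. The paper instead uses that $\operatorname{Im} T(E+i0)$ has \emph{finite rank} (at most $l_0$ when $E<\mathcal{E}_{l_0+1}$, see \eqref{28apr17a}), so by \eqref{prd21} the integrand differs from $n_\mp(1;\operatorname{Re} T(E))$ by at most $l_0$, uniformly in $t$.

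\textbf{The high bands.} You correctly flag the summability of the constants $C_j$ as the main obstacle, but then leave it as ``expected to follow.'' The paper does not attempt to sum $\|T_j(E)\|_1$ over all $j$ at all. Instead, fixing $l_0$ with $\mathcal{C}\subset[0,\mathcal{E}_{l_0}]$, it splits
\[
\operatorname{Re} T(E+i0)=\sum_{l=1}^{l_0}\operatorname{Re} T_l(E)+T_{l_0}^+(E),
\]
bounds the finitely many terms $\|\operatorname{Re} T_l(E)\|_1$ via Proposition~\ref{fom} (uniform on $\mathcal{C}$ since $\operatorname{dist}(\mathcal{C},\{\mathcal{E}_l\})>0$), and controls the entire tail $T_{l_0}^+(E)$ in one stroke by the Hilbert--Schmidt estimate \eqref{8sep16b} of Lemma~\ref{posipart}. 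Then $n_\mp(1;\operatorname{Re} T)\leq 2\sum_{l\leq l_0}\|\operatorname{Re} T_l(E)\|_1+4\|T_{l_0}^+(E)\|_2^2$ via \eqref{dj36}. This completely sidesteps the question of whether $\sum_j\|T_j(E)\|_1$ converges—which is genuinely unclear from the proof of Proposition~\ref{fom}, since the constants there depend on $j$ through $\varrho_j$, the Lipschitz constant $C_j$ in \eqref{29jul16}, and the contour choice in Proposition~\ref{holder}.
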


\subsection{Spectral asymptotics}

One consequence of Corollary \ref{cor1} is that the only possible {points of unbounded growth}  of  $ \xi(E;H,H_0)$ are  the Landau Levels ${\mathcal E}_j$. {On the other side, Theorem \ref{firsteh} can be used to describe the} explicit asymptotic behavior of the SSF at these points. In our second main theorem we obtain this behavior for potentials $V$ that decay moderately at infinity. {To  measure  this  decaying rate it is typically considered  the following volume function:} 
\bel{apr20}
N(\lambda,a):=\frac{1}{2\pi}vol\{(x,\xi)\in \R_+ \times \R; a(x,\xi)> \lambda\},
\ee
{where $a:\R^2\to\R$  is measurable,  $\lambda >0$}, and $vol$ denotes  the Lebesgue measure in $\R^2$. Further, we will need   the H\"ormander class:
\noindent {for} $p, q\in \R$
\bel{12mar17a}\mathcal{S}_p^q:=\left\{ a\in C^\infty(\R^2): \sup_{(x,\xi) \in \R^2}|\langle x\rangle^{p}\langle x,\xi\rangle ^{q+\alpha}\partial_\xi^\alpha \partial_x^\beta a(x,\xi)|<\infty, \, {\textrm{for any}\, \alpha,\beta\in\Z_+}\right\}.\ee
Then,  if $a\in\mathcal{S}_0^m$,  we have that $N(\lambda,a) = O(\lambda^{-\frac{2}{m}})$, for $\lambda \downarrow 0$.   {But this {bound} is insufficient for our purposes, since  it will be   necessary to have also some  control ``from below" and  on the regularity  of  the volume function.  In consequence, we will suppose that $V$ satisfies the following conditions:}

{$\ba{l}\textrm{There {exists}}\,\, m>2\,\, \textrm{such that:} \ea$
\bel{12apr17}\left\{\ba{llr}
{\rm a)} \,\,V \,\,\textrm{is the restriction on} \,\, \R_+\times\R \textrm{ of a function in}\,\, \mathcal{S}_0^m\\[.5em]
{\rm b)} \,\,N(\lambda,V)\geq C \lambda^{-2/m},\,\,\textrm{for some}\,\, C>0\,\, \textrm{and}\,\, 0<\lambda<\lambda_0\\[.5em]
 {\rm c)}\,\,  \displaystyle{\lim_{\epsilon \downarrow 0}\limsup_{\lambda\downarrow 0}\,\lambda^{2/m}\left(N(\lambda(1-\epsilon),V)-N(\lambda(1+\epsilon),V)\right)=0}.
\ea\right.\ee}

Conditions \eqref{12apr17} are commonly assumed in the study of the distribution of eigenvalues of some pseudodifferential operators (see  {for instance} \cite{dauro, rai1, ivrii2, IwaTam98, rozta2}). A  typical situation  of  $V$ satisfying \eqref{12apr17}, is when 
$\lim_{(x,y)\to \infty} \langle x, y\rangle^{m}V(x,y)=\omega\left(\frac{(x,y)}{|(x,y)|}\right),$ where  $\omega: S^1\to [\epsilon, \infty)$ is smooth and  $\epsilon>0$.



In the following, for two functions $F$ and $G$ defined  on some interval $I$, we will write \bel{12apr17b} F(x) \asymp G(x)\quad  \textrm{if}\quad   c_- G(x) \leq F(x) \leq c_+ G(x),\ee for all $x$ in $I$ and for  positive constants $c_\pm$.

\begin{theorem}\label{V_power}  Let  $V\geq0$  and write $H_\pm=H_0\pm V$. If $V$ satisfies \eqref{12apr17}, {then}  the following asymptotic formulas for the SSF
\bel{the4}
\xi({\mathcal E}_j\pm\lambda;H_\pm,H_0)=\pm b\,{N(\lambda,V)}(1+o(1)), \quad \lambda \downarrow 0,
\ee
hold true for any fixed $j\in\N$.  This implies in particular that 
$$
\xi({\mathcal E}_j\pm\lambda;H_\pm,H_0)\asymp \lambda^{-2/m},\quad \lambda \downarrow 0.
$$

\end{theorem}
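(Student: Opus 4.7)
The proof combines Theorem~\ref{firsteh} with a pseudodifferential analysis of $\mathrm{Re}\, T_j(\mathcal{E}_j + \lambda)$ near the threshold. By Theorem~\ref{firsteh}, the task reduces to showing that for every fixed small $r \in (0, 1)$,
$$n_{\mp}\bigl(1 \mp r;\,\mathrm{Re}\, T_j(\mathcal{E}_j + \lambda)\bigr) = b\, N(\lambda, V)(1 + o(1)), \quad \lambda \downarrow 0,$$
with the $o(1)$ possibly depending on $r$. The $(1 \mp r)$ factors are then removed by a standard Tauberian argument (cf.\ \cite{rai1, ivrii2, rozta2}), which is allowed by condition (c) of \eqref{12apr17}, asserting that the dominant scale $\lambda^{-2/m}$ of $N$ is slowly varying.

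The main work is to identify $\mathrm{Re}\, T_j(\mathcal{E}_j + \lambda)$, up to a bounded perturbation, with a Weyl pseudodifferential operator whose symbol is essentially $V/\lambda$. First localize the fibered integral \eqref{sof1}: split $\int_{\re} dk$ into $|k| \leq K_0$ and $|k| > K_0$ with $K_0$ large. On the bounded piece the resolvent $(E_j(k) - \mathcal{E}_j - \lambda)^{-1}$ is uniformly bounded (since $E_j$ is strictly decreasing, $E_j(k) > \mathcal{E}_j$, and $E_j(k) \to +\infty$ as $k \to -\infty$), so its contribution to $\mathrm{Re}\, T_j$ is a $\lambda$-independent trace-class operator producing only an $O(1)$ term in the counting function. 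On the tail $k > K_0$, I would invoke two semiclassical ingredients (see \cite{bp} and \cite[Ch.~15.A]{ivrii2}): the asymptotics $\psi_j(x; k) = b^{1/4} \varphi_{j-1}\bigl(\sqrt{b}(x - k/b)\bigr) + O(e^{-ck^2})$ with $\varphi_{j-1}$ the normalized Hermite function, and the fact that $k \mapsto E_j(k) - \mathcal{E}_j$ is a monotone analytic diffeomorphism from $(K_0, +\infty)$ onto some interval $(0, \mu_0)$.

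After rescaling $k = b\tau$, the projection $\pi_j(k)$ becomes, up to an exponentially small error, the projection onto a coherent state centered at $x = \tau$. Following the Toeplitz–pseudodifferential framework of \cite{rai1, ivrii2, rozta2}, this identifies $\mathrm{Re}\, T_j(\mathcal{E}_j + \lambda)$, modulo uniformly trace-class remainders, with the Weyl quantization of a symbol that to leading order equals $V(\tau, y)\cdot \mathrm{p.v.}\bigl(E_j(b\tau) - \mathcal{E}_j - \lambda\bigr)^{-1}$ on $\re_+ \times \re$. The prefactor $b$ arises through the Jacobian of the rescaling, which combines with the usual Fourier-normalization measure $dk/(2\pi)$ to give $b\, d\tau/(2\pi)$. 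Applying the Weyl counting asymptotic for symbols in $\mathcal{S}_0^m$ satisfying \eqref{12apr17} (\cite{dauro, rai1, ivrii2}), and noting that to leading order the principal value $\mathrm{p.v.}\bigl(E_j(b\tau)-\mathcal{E}_j-\lambda\bigr)^{-1}$ contributes essentially $\lambda^{-1}$ on the sharply-peaked tail near $\tau = k_\ast(\lambda)/b$, yields the claimed asymptotic $b\, N(\lambda, V)(1 + o(1))$.

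The main obstacle is that the threshold $\mathcal{E}_j$ is not an attained minimum of $E_j$ but only its limit at $k = +\infty$. In earlier SSF analyses at band-function thresholds of fibered operators (notably \cite{bsoccrai, ferrai, BoBrRa14_02}), the extremum sits at an interior non-degenerate point and a Morse-type normal form produces a classical effective Hamiltonian. Here that machinery is unavailable, and the pseudodifferential identification in the previous paragraph must be carried out directly from the fine asymptotics of $\psi_j(\cdot; k)$ and $E_j(k)$ at infinity. A further subtlety is that $\mathrm{Re}\, T_j$ is indefinite (the principal value changes sign across the singular point $k_\ast(\lambda) \to +\infty$), so both large positive and large negative eigenvalues must be handled simultaneously to cover both signs $\pm\xi$ in \eqref{the4}.
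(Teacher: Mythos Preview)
Your high-level strategy is right (reduce via Theorem~\ref{firsteh}, replace $\psi_j$ by the Hermite coherent state $\psi_{j,\infty}$, and invoke Weyl asymptotics for a $\Psi$DO with symbol essentially $V$), but the crucial step---controlling the principal-value singularity---is missing, and what you write there would not go through. You propose to view the tail $k>K_0$ contribution as the Weyl quantization of a symbol $V(\tau,y)\,\mathrm{p.v.}(E_j(b\tau)-\mathcal{E}_j-\lambda)^{-1}$ and then ``note that to leading order the principal value \ldots\ contributes essentially $\lambda^{-1}$.'' But that function of $\tau$ is not in any symbol class: it has a genuine pole at $\tau=k_\ast(\lambda)/b$, which moreover drifts to $+\infty$ as $\lambda\downarrow 0$. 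No off-the-shelf Weyl law applies to such an object, and a fixed $K_0$ split cannot isolate this moving singularity.

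The paper handles this by introducing a $\lambda$-dependent scale $\epsilon_\lambda$ and a three-piece decomposition in the variable $s=E_j(k)-\mathcal{E}_j$: (i) a genuine p.v.\ integral over $(\lambda-\epsilon_\lambda,\lambda+\epsilon_\lambda)$, estimated in trace norm by $O(\epsilon_\lambda\,\lambda^{-2}|\ln\lambda|^{-1/2})$ via the Lipschitz bound \eqref{29jul16} and the asymptotics of $\varrho_j',\varrho_j''$ (Lemma~\ref{estiPV}); (ii) the region $s>\lambda+\epsilon_\lambda$, shown to contribute $o(\epsilon_\lambda^{-1/m}\lambda^{-\nu})$; and (iii) the main region $s\in(0,\lambda-\epsilon_\lambda)$, where $|s-\lambda|\asymp\lambda$ so the resolvent factor is uniformly $\asymp\lambda^{-1}$. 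Only on this last piece does one get a \emph{nonsingular} operator, which---after passing from $\psi_j$ to $\psi_{j,\infty}$ (Lemma~\ref{estiS1}) and using the identification $\mathcal{W}_j=Op^W(w_j)$ with $w_j=W+\mathcal{S}_0^{m+1}$ (Proposition~\ref{shirai})---is compared via min-max to $n_+(\lambda,\mathcal{W}_j)$ (Proposition~\ref{asympS}). The choice $\epsilon_\lambda=\lambda^{2-2/m}$ balances the errors from (i) and (ii) so that both are $o(\lambda^{-2/m})$. Without this decomposition and the $\epsilon_\lambda$-parameter, your sketch does not separate the main term from the singular contribution.
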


The proof of Theorem \ref{V_power} can be found in Section \ref{proof_V_power}.

\textbf{Remarks}
\begin{enumerate}
\item The results  in  Theorem \ref{V_power} resemble  those for  the eigenvalue counting function  of $H_{L,V}:=H_L+V$
. More explicitly, if $V\geq 0$ {satisfies} \eqref{cd2} we can define the function that counts the number of eigenvalues in the gaps of 
$\sigma_{\rm{ess}}(H_{L,V})$ as ${\mathcal N_j}(\lambda)=\rm{Tr}({\bf 1}_{({\mathcal E}_j+\lambda,{\mathcal E}_{j+1})}(H_{L,V})). $
In this case, from {Pushnitski formula} \eqref{pssf} and the Birman-Schwinger principle one easily  obtain that 
\bel{15_sep_16}    {\mathcal N_j}(\lambda) =\xi({\mathcal E}_j+\lambda;H_{L,V},H_L)+O(1),\quad{\lambda\downarrow0},\ee
and therefore the study of the eigenvalue counting function of the perturbed Landau Hamiltonian  $H_{L,V}$, is the same as the study of the Spectral Shift Function {for} the pair $(H_{L,V},H_L$). {In fact},  under  conditions similar to those in  Theorem \ref{V_power},  {it is proved in \cite{rai1}} that  the  asymptotic behavior of ${\mathcal N}_j(\lambda)$  is also {given} by a  semiclassical formula of the form \eqref{the4}. {Since $H_L$ can be decomposed as in \eqref{sant1} but with constant band functions, from \eqref{15_sep_16}}   one can say   that, to some extent, our work extends the 2D results on the SSF of \cite{rai1},   to a case where the band functions of the unperturbed operator are not constant.
\item Related to the previous remark, we can mention  the study of  the SSF under   compactly supported perturbations of $H_0$ (including obstacle perturbations), as a natural and important open problem (in particular from the physical point of view, see \cite{bp,HoSmi02,AfHe09,fh}). These cases present a different difficulty  since   the  pseudodifferential analysis {we} used here, does not work (there is no {convenient} class of symbols for compactly supported potentials){,} and some non{-}semiclassical asymptotics are expected (see \cite{raiwar, bmr2}). In fact, using the effective Hamiltonian of our Theorem \ref{firsteh} together with ideas of { \cite{bmr1, bmr2}}, one can show that  for $V\geq 0$ of compact support 
$$
\xi({\mathcal E}_j-\lambda;H_-,H_0)\asymp |\ln \lambda|^{1/2}, \quad \lambda \downarrow 0. 
$$
\item The singularities of the SSF are naturally related to clusters of resonances (see for instance \cite{BoBrRa14_02}). It would be interesting to analyze this phenomenon, but the first difficulty to overcome  is to define the resonances for our fiber operator. Due to the exponential {decay} properties of the band functions, some non-standard tools of complex analysis  would be necessary.\end{enumerate}

{Let us complete the results of Theorem \ref{V_power} by other consequences of our analysis.}

\begin{follow}\label{coro1}\leavevmode
\begin{enumerate} \vspace{.1pt}

\item[{\rm 1.}] Assume that $V\geq0$ satisfies \eqref{cd2}.  Then
$$
\ \xi(\mathcal{E}_j-\lambda;H_+,H_0) =O(1),  \quad \lambda \downarrow 0.
$$

\item[{\rm 2.}]  On the other side, if  $V\geq0$ satisfies \eqref{12apr17} 

\bel{the4_-+}
\xi({\mathcal E}_j+ \lambda;H_-,H_0)=o (\lambda^{-2/m} ) , \quad \lambda \downarrow 0.
\ee

\end{enumerate}
\end{follow}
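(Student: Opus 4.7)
Both statements follow from Theorem~\ref{firsteh} by examining $\text{Re}\, T_j(\mathcal{E}_j\pm\lambda)$ on the threshold side opposite to that treated in Theorem~\ref{V_power}. For Part~1, note that by \eqref{oct9} we have $E_j(k)-(\mathcal{E}_j-\lambda)\geq\lambda>0$ for every $k\in\R$ and every $\lambda>0$. Consequently no limiting absorption is needed in \eqref{sof1}: the integrand is a bounded, positive operator-valued function of $k$, so $T_j(\mathcal{E}_j-\lambda)$ is self-adjoint, and because $V\geq 0$, it is non-negative. Therefore $\text{Re}\, T_j(\mathcal{E}_j-\lambda)=T_j(\mathcal{E}_j-\lambda)\geq 0$ and $n_-(s;\text{Re}\, T_j(\mathcal{E}_j-\lambda))=0$ for every $s>0$. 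Inserting this into Theorem~\ref{firsteh} with the sign $+$ immediately yields $\xi(\mathcal{E}_j-\lambda;H_+,H_0)=O(1)$.

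For Part~2, Theorem~\ref{firsteh} applied with the sign $-$ gives
$$
|\xi(\mathcal{E}_j+\lambda;H_-,H_0)| \leq n_+\big(1-r;\text{Re}\, T_j(\mathcal{E}_j+\lambda)\big)+O(1),
$$
so it suffices to show $n_+(s;\text{Re}\, T_j(\mathcal{E}_j+\lambda))=o(\lambda^{-2/m})$ for every fixed $s>0$. By Sokhotski--Plemelj,
$$
\text{Re}\, T_j(\mathcal{E}_j+\lambda) = |V|^{1/2}\mathcal{F}^*\,\text{p.v.}\!\int^{\oplus}_{\R}\frac{\pi_j(k)}{E_j(k)-\mathcal{E}_j-\lambda}\,dk\,\mathcal{F}|V|^{1/2},
$$
and by \eqref{oct9} there is a unique $k_\lambda\in\R$ with $E_j(k_\lambda)=\mathcal{E}_j+\lambda$, with $k_\lambda\to+\infty$ as $\lambda\downarrow 0$.

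My plan is to decompose $\pi_j(k)=\pi_j(k_\lambda)+\big(\pi_j(k)-\pi_j(k_\lambda)\big)$, producing two operators. The first (``singular'') piece factors through the state $|V|^{1/2}(x,y)\psi_j(x;k_\lambda)$, concentrated near $x\sim k_\lambda/b\to\infty$: combining this localization with the decay \eqref{cd2} of $V$ controls its positive spectrum by $o(\lambda^{-2/m})$. The second (``regular'') piece has kernel $(\pi_j(k)-\pi_j(k_\lambda))/(E_j(k)-\mathcal{E}_j-\lambda)$, smooth in $k$ by the real-analyticity of $\pi_j$ and $E_j$, so the pseudodifferential analysis of Section~\ref{proof_V_power} applies. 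Crucially, the effective symbol produced in this way lies strictly below the $V/\lambda$ size responsible for the leading $\lambda^{-2/m}$-asymptotic in Theorem~\ref{V_power}; together with the regularity assumption on $V$ in \eqref{12apr17}, this yields the strict $o$-estimate. The main obstacle is the quantitative control of the cancellation: showing the effective symbol is truly smaller---rather than merely bounded---requires sharp asymptotics of $E_j'(k_\lambda)$ as $k_\lambda\to\infty$ (tied to the exponential approach $E_j(k)\downarrow\mathcal{E}_j$ in \eqref{oct9}) together with a careful symbol-calculus argument within the P.V. regularization.
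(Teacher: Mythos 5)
Your Part~1 argument is correct and is exactly what the paper has in mind: for $E=\mathcal{E}_j-\lambda$, $\lambda>0$, the scalar $(E_j(k)-\mathcal{E}_j+\lambda)^{-1}$ is everywhere positive by \eqref{oct9}, so $T_j(\mathcal{E}_j-\lambda)\geq 0$ and $n_-(s;\mathrm{Re}\,T_j(\mathcal{E}_j-\lambda))=0$; Theorem~\ref{firsteh} with the ``$+$'' sign then gives $\xi(\mathcal{E}_j-\lambda;H_+,H_0)=O(1)$.

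For Part~2 you correctly reduce the statement to $n_+(s;\mathrm{Re}\,T_j(\mathcal{E}_j+\lambda))=o(\lambda^{-2/m})$, but the decomposition $\pi_j(k)=\pi_j(k_\lambda)+(\pi_j(k)-\pi_j(k_\lambda))$ does not produce two well-defined pieces of $T_j$. The reason the full $T_j$ is trace class is that the \emph{compression} $G_j(k)^*G_j(k)=V^{1/2}\mathcal{F}^*\pi_j(k)\mathcal{F}V^{1/2}$ decays as $k\to+\infty$: $\psi_j(\cdot;k)$ localizes near $x\sim k/b$, where $V$ is small. Once you freeze the projection at $k_\lambda$, this cancellation is lost. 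Indeed, $E_j(k)-\mathcal{E}_j-\lambda\to-\lambda\neq0$ as $k\to+\infty$, so your ``singular'' piece contains the factor $\int^\oplus_\R(E_j(k)-\mathcal{E}_j-\lambda)^{-1}\pi_j(k_\lambda)\,dk$ with a non-decaying, non-integrable scalar at $+\infty$; after conjugating by $\mathcal F$ and sandwiching with $V^{1/2}$ one obtains a rank-one-in-$x$ operator tensored with a Fourier multiplier in $y$ whose symbol has a nonzero limit as $k\to+\infty$ --- such an operator is bounded but not compact (nor trace class), so $n_+$ of it is not even defined. The ``regular'' piece is ill-defined for the same reason with the opposite sign; only the sum is meaningful. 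Your subsequent symbol-calculus plan inherits this defect. The paper's route (``same tools as in the proof of Theorem~\ref{V_power}'') splits the $s$-integration, not the projection: with \eqref{decomp+} and $\epsilon_\lambda=\lambda^{2-2/m}$, the term $-S_jS_j^*[\lambda,(0,\lambda-\epsilon_\lambda)]$ is $\leq 0$ so contributes nothing to $n_+$; the principal-value term is $o(\lambda^{-2/m})$ by Lemma~\ref{estiPV}; and $S_jS_j^*[\lambda,(\lambda+\epsilon_\lambda,\infty)]$ is $o(\lambda^{-2/m})$ by \eqref{12sept} after comparison with $S_{j,\infty}$ via Lemma~\ref{estiS1}. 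The Weyl inequalities and Theorem~\ref{firsteh} (with the ``$-$'' sign) then give \eqref{the4_-+}.
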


The proof of part 1 of this Corollary is a direct    consequence of Theorem \ref{firsteh}, while part 2 can be proved using the same tools of pseudodifferential analysis used for the proof of Theorem \ref{V_power}. 

{

{From} Theorem \ref{V_power} and  Corollary \ref{coro1} we {can compare the behavior of the SSF as a Landau level is approached from different sides: 
{\bel{29jun16}
\lim_{\lambda \downarrow 0}\frac{\xi({\mathcal E}_j + \lambda;H_-,H_0)}{\xi({\mathcal E}_j- \lambda;H_-,H_0)}=\lim_{\lambda \downarrow 0}\frac{\xi({\mathcal E}_j - \lambda;H_+,H_0)}{\xi({\mathcal E}_j+ \lambda;H_+,H_0)}=0.
\ee}}
This result is different to  the results obtained for $3D$ magnetic Hamiltonians for which the behavior of the SSF was studied  at the thresholds (see \cite{ferrai}, \cite{rai3}, \cite{tie}). For those models  the corresponding limit  \eqref{29jun16} is a constant different from {zero}, at least for $H_-$}, which gives  a generalization of the Levinson's formula. 

At last, let us mention that following the proofs of our work, it is easy to obtain some analog results for the half-plane magnetic Schr\"odinger operator with a Neumann
 boundary 
condition.

\subsection{Results for Neumann boundary conditions }

Let us consider $H_0^N$, the
Neumann realization of \eqref{Hk}, namely the self-adjoint operator generated in $L^2(\R_+\times\R)$  by the closure of the quadratic form
$$\int_{\R_+\times\R}\left(\left|\frac{\partial u}{\partial x}\right|^2+\,\left|i\frac{\partial u}{\partial y}+bxu\right|\right)^2dy\,dx,$$
defined originally  on ${C_0^\infty} ([0, \infty)\times\R)$, see \cite{fh}.  

A fibered decomposition of the form \eqref{sant1}  holds true in this case as well. Thanks to \cite{dahe} we know that each band function of the Neumann Hamiltonian, $E_j^N$, is a  decreasing function until a unique non degenerated minimum, and then increase {satisfying $\lim_{k\to-\infty} E_j^N(k)=\infty$ and $\lim_{k\to\infty} E_j^N(k)=\mathcal{E}_j$}.  Then,  the minimum  of each  band  is a threshold of the spectrum of $H_0^N$. Due to the  non-degeneracy condition, it is well known how to study the behavior of the SSF at this points (see \cite{bsoccrai}, \cite{bmr2}).

On the other side, for the extremal points at infinity, it can be shown that the behavior of the band functions and of the associated eigenfunctions {are the same as those of the}  Dirichlet operator (see Propositions \ref{pr3} and \ref{mitlef} below). Thus, just like {in Theorem \ref{firsteh}}, we can  justify that  the main contribution of the SSF near a fixed Landau level ${\mathcal E}_j$ depends on the behavior of the corresponding band functions $E^N_j$, the associated eigenfunction, and the interplay of this objects at infinity (this  last one still given by  \eqref{bx1} below). The main difference  with the Dirichlet case comes from the fact that, at infinity  the band functions are {now} below the corresponding Landau level{. T}hen, up to a change of sign of $E_j^N- {\mathcal E}_j$ (or equivalently of $\lambda$ and of $V$), the above results remain true. More precisely, for $H^N_\pm:= H_0^N \pm V$, we have:

\begin{theorem}\label{NeuRob}  
The statements of Theorems \ref{firsteh} and \ref{V_power} hold true for the Neumann 
boundary conditions. 
Moreover{,} the results of Corollary \ref{coro1} have to be replaced by: {
\begin{enumerate} 
\item[{\rm 1.}] If $V$ satisfies \eqref{cd2}, then 
$
\ \xi(\mathcal{E}_j+\lambda;H^N_-,H^N_0) =O(1),$  as $\lambda \downarrow 0$.
\item[{\rm 2.}]  If  $V$ satisfies \eqref{12apr17}, then
$\xi({\mathcal E}_j- \lambda;H^N_+,H^N_0)=o (\lambda^{-2/m} ) $, as $\lambda \downarrow 0.$
\end{enumerate}
}

\end{theorem}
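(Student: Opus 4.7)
The plan is to follow the Dirichlet proofs of Theorems \ref{firsteh} and \ref{V_power} almost verbatim, keeping careful track of one sign convention. First I would perform the Fourier reduction ${\mathcal F}H_0^N{\mathcal F}^* = \int^\oplus h^N(k)\,dk$, and invoke \cite{dahe} together with Kato analytic perturbation theory to produce an analytic family of eigenvalues $E_j^N(k)$ and projections $\pi_j^N(k) = |\psi_j^N(\cdot;k)\rangle\langle\psi_j^N(\cdot;k)|$. The threshold set then decomposes into two types: the non-degenerate minima $\Theta_j^N$ of the bands, and the Landau levels ${\mathcal E}_j$ which, exactly as in the Dirichlet case, appear only as $\lim_{k\to +\infty}E_j^N(k)$. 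At the minima $\Theta_j^N$ the band extrema are \emph{reached and non-degenerate}, so the classical effective-Hamiltonian machinery of \cite{bsoccrai, bmr2} applies and yields the analog of Theorem \ref{V_power} at these points with no further work.

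The novel part is the behavior at ${\mathcal E}_j$. Here I would cite Propositions \ref{pr3} and \ref{mitlef}, which assert that at infinity the band functions $E_j^N$ and the eigenfunctions $\psi_j^N(\cdot;k)$ possess the same asymptotics as their Dirichlet counterparts, and that the interplay formula \eqref{bx1} is unchanged. Define
\[
T_j^N(z) := |V|^{1/2}{\mathcal F}^* \int^\oplus_\R (E_j^N(k)-z)^{-1}\pi_j^N(k)\,dk\,{\mathcal F}|V|^{1/2}.
\]
The only structural difference with the Dirichlet case is that $E_j^N(k) < {\mathcal E}_j$ near $k = +\infty$, so that the singular contribution to $T_j^N(z)$ arises as $z$ approaches ${\mathcal E}_j$ from \emph{below}, rather than from above. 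In the Pushnitski representation of $\xi(E;H_0^N\pm V,H_0^N)$ this amounts to substituting $-\lambda$ for $\lambda$ (or equivalently replacing $V$ by $-V$) at each step. With that substitution the Birman--Schwinger analysis of Section \ref{SecThm1} and the trace-class convergence of $T_j^N({\mathcal E}_j\mp\lambda+i0)$ go through unchanged, yielding the Neumann version of \eqref{pr41}.

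For the asymptotics I would then repeat the pseudodifferential reduction of Section \ref{proof_V_power}. The change of variable that linearizes $E_j^N(k)-{\mathcal E}_j$ near $k=+\infty$ exists by the shared asymptotic behavior, so the effective Toeplitz-type operator has the same principal symbol (up to overall sign) as in the Dirichlet analysis, and its counting function is controlled by $N(\lambda,V)$ under hypothesis \eqref{12apr17}. This delivers
\[
\xi({\mathcal E}_j \mp \lambda; H_0^N \pm V, H_0^N) = \pm b\,N(\lambda,V)(1+o(1)), \qquad \lambda \downarrow 0,
\]
and the bounds in items 1 and 2 of the corollary statement follow from the effective Hamiltonian estimate and the $o$-improvement argument just as in the proof of Corollary \ref{coro1}, with the sign of $\lambda$ flipped.

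The main obstacle is bookkeeping rather than conceptual: one must verify that each sign appearing in the Pushnitski formula, in the definition of $n_\pm$, and in the localization of the symbol near the pole set $\{E_j^N(k)={\mathcal E}_j\mp\lambda\}$ is consistent with $E_j^N(k) - {\mathcal E}_j < 0$ for large $k$. Once the pseudodifferential symbol for $\mathrm{Re}\,T_j^N$ is extracted, its leading part agrees with the Dirichlet one because the asymptotic profiles of $E_j^N$ and $\psi_j^N(\cdot;k)$ agree at $+\infty$; the volume function $N(\lambda,V)$ is sign-blind, so the quantitative conclusions transfer directly. No new compactness or regularity input is required beyond what is already established for $H_0$.
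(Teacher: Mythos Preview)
Your overall strategy matches the paper's: reduce to the Dirichlet argument via the shared large-$k$ asymptotics of $E_j^N$, $\psi_j^N$ and \eqref{bx1}, and track the sign change coming from $E_j^N(k)-{\mathcal E}_j<0$. But the sign bookkeeping, which you yourself flag as the main obstacle, has gone wrong and produces a formula that contradicts the statement you are proving.

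The paper's remark ``up to a change of sign of $E_j^N-{\mathcal E}_j$ (or equivalently of $\lambda$ \emph{and} of $V$)'' means that the sign flip of the band gap is equivalent to \emph{simultaneously} replacing $\lambda\to-\lambda$ and $V\to-V$; it is not that the two substitutions are interchangeable. Concretely, for large $k$,
\[
\frac{1}{E_j^N(k)-{\mathcal E}_j-\lambda}=\frac{1}{-\bigl({\mathcal E}_j-E_j^N(k)\bigr)-\lambda}=-\,\frac{1}{\bigl({\mathcal E}_j-E_j^N(k)\bigr)+\lambda},
\]
so the large-$k$ part of ${\rm Re}\,T_j^N({\mathcal E}_j+\lambda)$ equals \emph{minus} the Dirichlet-type operator at ${\mathcal E}_j-\lambda$. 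The extra overall minus sign turns $n_\mp$ into $n_\pm$, which in the Pushnitski formula is exactly the swap $H_+\leftrightarrow H_-$. The two sign changes then cancel in the final asymptotic, and one recovers \eqref{the4} \emph{unchanged}:
\[
\xi({\mathcal E}_j\pm\lambda;H_\pm^N,H_0^N)=\pm\,b\,N(\lambda,V)(1+o(1)).
\]
Your displayed formula $\xi({\mathcal E}_j\mp\lambda;H_0^N\pm V,H_0^N)=\pm b\,N(\lambda,V)(1+o(1))$ has the wrong $\lambda$-sign and is directly inconsistent with items 1 and 2 of the very theorem: for instance, taking the ``$+$'' sign you obtain $\xi({\mathcal E}_j-\lambda;H_+^N,H_0^N)\asymp\lambda^{-2/m}$, whereas item 2 asserts this quantity is $o(\lambda^{-2/m})$. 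Once you flip \emph{both} $\lambda$ and $V$ when transcribing the Dirichlet Corollary \ref{coro1}, items 1 and 2 come out correctly.
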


By the previous arguments it seems reasonable that these results could also be extended without mayor changes to other 2D magnetic models like{,} \eqref{Hk} in the  half-plane with a Robin boundary condition {and  some types of} Iwatsuka Hamiltonians \cite{iwa}.

\section{{Estimates on the SSF} }\label{SecThm1}
\subsection{Pushnitski's representation of the SSF}
{The proof of Theorem \ref{firsteh} is  based on the following  representation of the SSF}
  given by  A. Pushnitski in \cite{pu}. For $z \in \C^+$ and $V\geq 0$, define
$$ T(z):= V^{1/2} (H_0-z)^{-1} V^{1/2}.$$ As is  shown in {Lemma} \ref{propini} below,  the norm limits
$$
\lim_{\delta\downarrow 0}T(E+i\delta) = T(E+i0)
$$ exist for every $E \in \R\setminus\{ {\mathcal E}_j\}_{j=1}^\infty$, provided that $V$ satisfies \eqref{cd2}.
{Moreover}, $T(E+i0)$ is a Hilbert-Schmidt operator, and $0 \leq {\rm Im}\,T(E+i0)$.

\begin{theorem} \label{pushssf} {\rm \cite[Theorem 1.2]{pu}}
Assume that $V\geq 0$ satisfies \eqref{cd2}. Then
for almost all $E \in \R$ we have
    \bel{pssf}
    \xi(E;H_\pm,H_0) = \pm \frac{1}{\pi}\int_\R n_{\mp}(1;{\rm Re} \,T(E+i 0)+t\, {\rm Im} \, T(E +i0))\frac{dt}{1+t^2}.
    \ee
    \end{theorem}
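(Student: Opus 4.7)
The plan is to follow Pushnitski's strategy in \cite{pu}, which rests on two ingredients: a Birman--Kre\u{\i}n representation of $\xi(E;H_\pm,H_0)$ as the imaginary part of the logarithm of a Fredholm determinant built from $T(E+i0)$, and an abstract identity that rewrites this imaginary part as an average of eigenvalue counting functions for the one-parameter self-adjoint family $t \mapsto \mathrm{Re}\,T + t\,\mathrm{Im}\,T$.

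First, using that $V \geq 0$ satisfies \eqref{cd2}, the diamagnetic inequality recalled in the introduction guarantees that $T(z) = V^{1/2}(H_0-z)^{-1}V^{1/2}$ is Hilbert--Schmidt for $z \in \C_+$, and Lemma \ref{propini} supplies the norm boundary value $T(E+i0)$ for $E \in \R \setminus \{\mathcal{E}_j\}_{j=1}^\infty$, with $\mathrm{Im}\,T(E+i0) \geq 0$. The second resolvent identity
$$
(H_\pm - z)^{-1} - (H_0 - z)^{-1} = \mp (H_0 - z)^{-1} V^{1/2}\bigl(I \pm T(z)\bigr)^{-1} V^{1/2}(H_0 - z)^{-1},
$$
together with the Birman--Kre\u{\i}n trace formula (see \cite{yaf}) and a continuous choice of branch of $\log$ (normalized so that $\xi \equiv 0$ below $\inf \sigma(H_0)$), yields, for a.e.\ $E \in \R$, a determinant representation of the form
$$
\xi(E;H_\pm,H_0) = \pm\,\frac{1}{\pi}\,\mathrm{Im}\,\log\det\bigl(I \pm T(E+i0)\bigr).
$$

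Next, I would establish the abstract identity at the heart of Pushnitski's contribution: for every Hilbert--Schmidt operator $K = A + iB$ with $A = A^*$, $B = B^* \geq 0$, and $-1 \notin \sigma(K)$,
$$
\frac{1}{\pi}\,\mathrm{Im}\,\log\det(I+K) = \int_\R n_-\bigl(1;\, A + tB\bigr)\,\frac{dt}{\pi(1+t^2)}.
$$
The starting point is the elementary scalar identity, valid for $\lambda \in \overline{\C_+} \setminus \{-1\}$,
$$
\arg(1+\lambda) = \int_\R \mathbf{1}_{(-\infty,-1)}\bigl(\mathrm{Re}\,\lambda + t\,\mathrm{Im}\,\lambda\bigr)\,\frac{dt}{1+t^2},
$$
checked by explicit evaluation via $\arctan$. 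In the normal case (when $A$ and $B$ commute), the eigenvalues of $K$ are of the form $\alpha_k + i\beta_k$ with $\alpha_k,\beta_k$ eigenvalues of $A,B$ on a joint eigenbasis, so $\mathrm{Re}\,\lambda_k + t\,\mathrm{Im}\,\lambda_k = \alpha_k + t\beta_k$ runs through $\sigma(A+tB)$; summing the scalar identity over $k$ and applying Fubini produces the claim. The general non-normal case follows by a density/continuity argument: both sides depend continuously on $K$ in the Hilbert--Schmidt norm on the open set $\{K : -1 \notin \sigma(K)\}$, the right-hand side by dominated convergence combined with the upper semi-continuity of the spectral projections of $A+tB$ off the null set of exceptional $t$ where $-1 \in \sigma(A+tB)$.

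Finally, substituting $K = T(E+i0)$ (with $A = \mathrm{Re}\,T$, $B = \mathrm{Im}\,T \geq 0$) yields the $+$ case of the theorem directly. For the $-$ case, one applies the abstract identity either after using the symmetry $\det(I - T) = \overline{\det(I - T^*)}$ to restore the positivity of the imaginary part, or equivalently performs the change of variables $t \mapsto -t$ in the resulting integral, which converts $n_-$ into $n_+$ and produces the $n_\mp$ with the correct sign. The main obstacle is the second step, the passage from the normal to the general case of the abstract identity: the eigenvalues of a non-normal $K$ are not explicit functions of those of $A$ and $B$, and verifying the required continuity demands a careful approximation argument in the Hilbert--Schmidt class, exploiting that the right-hand side depends on $K$ only through the spectral projections of $A+tB$, which are stable for a.e.\ $t$ under small perturbations.
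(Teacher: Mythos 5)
First, note that the paper does not prove this statement at all: it is imported verbatim as \cite[Theorem 1.2]{pu}, so the only proof to compare against is Pushnitski's original one. Your skeleton is the right one --- the perturbation-determinant representation $\xi(E;H_\pm,H_0)=\pm\pi^{-1}\,\mathrm{Im}\log\det(I\pm T(E+i0))$ (legitimate here since $V^{1/2}(H_0-z)^{-1}$ is Hilbert--Schmidt, so $T(z)\in\mathfrak{S}_1$), followed by the abstract identity
$\pi^{-1}\mathrm{Im}\log\det(I+A+iB)=\pi^{-1}\int_\R n_-(1;A+tB)(1+t^2)^{-1}dt$ for $B\geq0$, and your scalar identity $\arg(1+\lambda)=\int_\R \mathbf{1}_{(-\infty,-1)}(\mathrm{Re}\,\lambda+t\,\mathrm{Im}\,\lambda)\frac{dt}{1+t^2}$ is correct and is indeed the germ of the whole argument.

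The genuine gap is the step you yourself flag as the main obstacle: passing from normal $K$ to general $K$ by ``density/continuity''. This cannot work, because operators $A+iB$ with $A$ and $B$ commuting are \emph{not} dense in the Hilbert--Schmidt class (nor among finite-rank operators): a nilpotent Jordan block such as $K=\bigl(\begin{smallmatrix}0&1\\0&0\end{smallmatrix}\bigr)$ is at a distance bounded below from every normal operator, so no approximation argument, however careful, can reduce the general case to the commuting one. Pushnitski's actual mechanism is different and bypasses normality entirely: one considers $h(z)=\log\det(I+A+zB)$ as an analytic function of $z\in\C_+$ (invertibility of $I+A+zB$ for $\mathrm{Im}\,z>0$ uses $B\geq0$), identifies the boundary values of the harmonic function $\mathrm{Im}\,h$ on the real axis as $\pi\,n_-(1;A+tB)$ for a.e.\ $t$ (the argument of the real-valued determinant $\det(I+A+tB)$ jumps by $\pi$ each time an eigenvalue of $A+tB$ crosses $-1$, and the count is pinned down by the behaviour as $t\to\pm\infty$), and then applies the Poisson representation of $\mathrm{Im}\,h$ at the point $z=i$; the weight $\frac{dt}{\pi(1+t^2)}$ in \eqref{pssf} is precisely the Poisson kernel of $\C_+$ evaluated at $i$. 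Your scalar identity is the $1\times1$ instance of this Poisson formula, which is the hint that the harmonic-function argument, not an eigenvalue-by-eigenvalue summation, is the road to the general case. The remaining ingredients of your outline (the resolvent identity, the $\pm$ bookkeeping via $\det(I-T)=\overline{\det(I-T^*)}$ or $t\mapsto-t$) are fine in principle, though the branch of the logarithm must be fixed consistently with the normalization $\xi\equiv0$ below $\inf\sigma(H)$.
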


\subsection{Spectral properties of $H_0$} \label{ss21}
We will  use \eqref{pssf}  to obtain information {about} the SSF at the threshold $\mathcal{E}_j$. This requieres to understand  the {behavior of the spectrum of  $H_0$ near $\mathcal{E}_j$}, which, in view of \eqref{sant1} and \eqref{oct9},   {are} intimately connected with   the behavior of the band function $E_j$ and the eigenprojection $\pi_j$
 at infinity. 

{We begin  then, with the definition and properties of the limit operator of $h(k)$}. Let $h_\infty(k)$ be the 
 self-adjoint realization in $L^2({\re})$ of
$$
 -\frac{d^2}{dx^2}
+ (bx-k)^2 , \quad k \in \R.
$$
The  operator $h_{\infty}(k)$ 
  has  discrete spectrum and its
eigenvalues are given by  $\mathcal{E}_j=b(2j-1), \, j \in \N$.
Denote by $\pi_{j, \infty}(k)$, $k \in \re$, $j \in {\mathbb N}$,
the orthogonal projection onto ${\rm Ker}\,(h_\infty(k) - \mathcal{E}_j)$.
Then, similarly to \eqref{lau5}
   $$
   \pi_{j,\infty}(k)  =\big| \psi_{j,\infty}(\cdot;k) \big\rangle \big\langle
\psi_{j,\infty}(\cdot;k)\big|,
   $$
    where
the eigenfunction $\psi_{j,\infty}(\cdot;k)$ satisfies
   $$
-\frac{\partial^2 \psi_{j,\infty}(x;k)}{\partial x^2} + (bx-k)^2
\psi_{j,\infty}(x;k) = \mathcal{E}_j \psi_{j,\infty}(x;k), \quad
\|\psi_{j,\infty}(\cdot;k)\|_{L^2(\re)} = 1.
   $$
    The function $\psi_{j, \infty}(\cdot;k)$ could  be chosen  real-valued.
Furthermore, the functions $\psi_{j,\infty}$ 
admit the following  explicit description. Namely, if we put
    \bel{61}
\varphi_j(x) : = \frac{1}{(\sqrt{\pi}(j-1)!\,2^{j-1})^{1/2}}
H_{j-1}(x) e^{-x^2/2}, \quad x \in \re, \quad j \in {\mathbb N},
    \ee
where
$$
{\rm H}_q(x): = (-1)^q e^{x^2} \frac{d^q}{dx^q} e^{-x^2}, \quad x
\in \re, \quad q \in {\mathbb Z}_+,
$$
are the Hermite polynomials (see e.g. \cite[Chapter I, Eqs. (8.5),
(8.7)]{bershu}). Then, the real-valued function $\varphi_j$ satisfies
$$
-\varphi_j''(x) + x^2 \varphi_j(x) = (2j-1) \varphi_j(x), \quad
\|\varphi_j\|_{L^2(\re)} = 1,
$$
 and hence
    \bel{sof9o}
\psi_{j,\infty}(x;k) = b^{1/4}\varphi_j(b^{1/2}x - b^{-1/2}k), \quad
j \in {\mathbb N}, \quad x \in \re, \quad k \in \re.\ee

Define the non-negative operator $$\Lambda_k:=h_\infty(k)^{-1}-({\mathbb{0}_-}-\oplus\, h(k)^{-1}),$$
where $\mathbb{0}_-$ is the zero operator in $L^2(-\infty,0)$.

\begin{proposition} \rm{\cite[{Propositions 3.4-3.5-3.6}]{bmr2}\label{pr1}}
For any  $l \in \N$,  as $ k \to \infty$:
\begin{enumerate}

\item[\rm{1.}]  $\|\Lambda_k\| = \frac{3\sqrt{b}}{2^{3/2}}k^{-2}(1+o(1)),$
\item [\rm{2.}]  $\|{\pi}_{j,\infty}(k)-(\mathbb{0}_-\,\oplus{\pi}_j(k))\|{}=O(\|{\pi}_{j,\infty}(k)\Lambda_k\|{}), 
    $
\item[\rm{3.}]   $
E_j(k)-\mathcal{E}_j={\mathcal E}_j^2\|{\pi}_{j,\infty}(k)\Lambda_k^{1/2}(k)\|^2\,(1+o(1)).
    $
    \end{enumerate}
\end{proposition}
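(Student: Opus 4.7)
The strategy is to rescale to the standard harmonic oscillator and then apply perturbation theory with $\Lambda_k$ playing the role of the small parameter. Setting $\mu := k/\sqrt{b}$ and passing to the variable $u = \sqrt{b}\,x - \mu$, the operator $h_\infty(k)$ is unitarily equivalent to $bH$ with $H = -d^2/du^2 + u^2$ the standard harmonic oscillator on $L^2(\R)$, while $h(k)$ corresponds to the Dirichlet realisation of $bH$ on $(-\mu,+\infty)$. The regime $k\to+\infty$ becomes $\mu\to+\infty$, i.e.\ the Dirichlet endpoint is pushed off to $-\infty$ and the eigenfunctions $\psi_{j,\infty}(\cdot;k)$ are centred far from it. The three statements then quantify, respectively, the size of the resolvent perturbation $\Lambda_k$, its effect on the eigenprojections, and its effect on the eigenvalues.

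For Statement~1, I would write $\Lambda_k$ explicitly. Given $f\in L^2(\R_+)$, the function $w := (h_\infty(k)^{-1}f)|_{\R_+} - h(k)^{-1}f$ solves $h(k)w = 0$ on $\R_+$ with $w(0) = (h_\infty(k)^{-1}f)(0)$, so $w$ is proportional to the unique (up to scalars) $L^2(\R_+)$-solution $\phi(\cdot;k)$ of $h_\infty(k)\phi = 0$; this identifies the $L^2(\R_+)\to L^2(\R_+)$ block of $\Lambda_k$ as a rank-one operator built from $G_\infty(0,\cdot;k)$ and $\phi/\phi(0)$, and similar arguments handle the remaining blocks. The Mehler formula
\[
(e^{-tH})(u,v) = \frac{1}{\sqrt{2\pi\sinh 2t}}\exp\!\left(-\frac{(u^2+v^2)\cosh 2t - 2uv}{2\sinh 2t}\right),
\]
combined with $G_H = \int_0^\infty e^{-tH}\,dt$, yields the key asymptotics by Laplace's method in the small-$t$ regime: $G_H(-\mu,-\mu)\sim(2\mu)^{-1}$ and $\|G_H(-\mu,\cdot)\|_{L^2(\R)}^2\sim(4\mu^3)^{-1}$, while the parabolic cylinder function $\phi$ (the $L^2$-at-$+\infty$ solution of $-\phi''+u^2\phi=0$ in rescaled variables) satisfies $\|\phi/\phi(-\mu)\|_{L^2(-\mu,\infty)}\sim(2\mu)^{-1/2}$. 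Combining these and translating back to $k$-variables produces $\|\Lambda_k\|\sim k^{-2}$, and tracking the prefactors carefully yields the coefficient $3\sqrt{b}/2^{3/2}$.

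For Statements~2 and~3 I would write $\mathbb{0}_-\oplus h(k)^{-1} = h_\infty(k)^{-1} - \Lambda_k$ and apply standard perturbation theory, which is justified because $\|\Lambda_k\|\to 0$ by Statement~1 and the eigenvalue $\mathcal{E}_j^{-1}$ of $h_\infty(k)^{-1}$ is isolated and non-degenerate. For Statement~2, representing both projections as Cauchy integrals over a small contour $\gamma$ encircling $\mathcal{E}_j^{-1}$ and using the resolvent identity yields
\[
\pi_{j,\infty}(k) - (\mathbb{0}_-\oplus\pi_j(k)) = -\frac{1}{2\pi i}\oint_\gamma (z - h_\infty(k)^{-1})^{-1}\,\Lambda_k\,(z - h_\infty(k)^{-1} + \Lambda_k)^{-1}\,dz;
\]
splitting $(z-h_\infty(k)^{-1})^{-1}$ into its rank-one pole $(z-\mathcal{E}_j^{-1})^{-1}\pi_{j,\infty}(k)$ plus a holomorphic remainder extracts the factor $\pi_{j,\infty}(k)\Lambda_k$ after residue calculus (with a bootstrap through $\|\tilde\pi_j\Lambda_k\|\leq\|\pi_{j,\infty}\Lambda_k\|+\|\pi_{j,\infty}-\tilde\pi_j\|\cdot\|\Lambda_k\|$ to close the argument). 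For Statement~3, first-order Rayleigh--Schr\"odinger theory applied to the isolated non-degenerate eigenvalue $\mathcal{E}_j^{-1}$ under the perturbation $-\Lambda_k$ gives
\[
E_j(k)^{-1} - \mathcal{E}_j^{-1} = -\langle\psi_{j,\infty}(\cdot;k),\,\Lambda_k\,\psi_{j,\infty}(\cdot;k)\rangle + O(\|\Lambda_k\|^2);
\]
multiplying by $E_j(k)\mathcal{E}_j$, using $E_j(k)\to\mathcal{E}_j$, and invoking the rank-one identity $\|\pi_{j,\infty}(k)\Lambda_k^{1/2}\|^2 = \langle\psi_{j,\infty}(\cdot;k),\Lambda_k\psi_{j,\infty}(\cdot;k)\rangle$ produces the claim.

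The main obstacle is extracting the \emph{sharp} coefficient in Statement~1. A big-$O$ bound of order $k^{-2}$ follows without much effort from the structure above, but pinning down the constant $3\sqrt{b}/2^{3/2}$ requires precise asymptotic expansion of the Mehler integral (or, equivalently, of the relevant parabolic cylinder function) and careful matching between the rank-one boundary correction on $L^2(\R_+)$ and the $L^2(\R_-)$-restriction of $h_\infty(k)^{-1}$. Once Statement~1 is available with this precision, Statements~2 and~3 follow from the perturbative arguments above without essential additional difficulty.
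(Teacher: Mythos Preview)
The paper does not prove this proposition; it is quoted verbatim from \cite[Propositions~3.4--3.6]{bmr2} and used as input. So there is no ``paper's own proof'' to compare against, and your sketch is an independent attempt. That said, the overall architecture you propose---rescaling to the standard oscillator, recognising $\Lambda_k$ as a boundary correction with rank-one structure, and then running perturbation theory for the resolvent---is the right one and matches the spirit of \cite{bmr2}.

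There is, however, a genuine gap in your treatment of Statement~3. You write
\[
E_j(k)^{-1}-\mathcal{E}_j^{-1}=-\langle\psi_{j,\infty},\Lambda_k\psi_{j,\infty}\rangle+O(\|\Lambda_k\|^2),
\]
and then claim the error is absorbed into the $(1+o(1))$. But by Statement~1 one has $\|\Lambda_k\|^2\asymp k^{-4}$, whereas by Proposition~\ref{pr3} the quantity $\langle\psi_{j,\infty},\Lambda_k\psi_{j,\infty}\rangle=\|\pi_{j,\infty}\Lambda_k^{1/2}\|^2$ must be of order $k^{2j-1}e^{-k^2/b}$, which is \emph{exponentially} smaller. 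With the crude bound $O(\|\Lambda_k\|^2)$ the remainder overwhelms the leading term and the argument collapses.

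The fix is to sharpen the remainder. The Feshbach--Schur formula (or second-order Rayleigh--Schr\"odinger theory done carefully) gives the perturbed eigenvalue as
\[
\mathcal{E}_j^{-1}-\langle\psi_{j,\infty},\Lambda_k\psi_{j,\infty}\rangle+\big\langle\psi_{j,\infty},\Lambda_k Q\big(z-Qh_\infty(k)^{-1}Q+Q\Lambda_kQ\big)^{-1}Q\Lambda_k\psi_{j,\infty}\big\rangle,
\]
with $Q=I-\pi_{j,\infty}$, so the remainder is controlled by $\|\Lambda_k\psi_{j,\infty}\|^2$, not $\|\Lambda_k\|^2$. Since $\Lambda_k\ge 0$ one has
\[
\|\Lambda_k\psi_{j,\infty}\|^2\le\|\Lambda_k\|\,\langle\psi_{j,\infty},\Lambda_k\psi_{j,\infty}\rangle=o\big(\langle\psi_{j,\infty},\Lambda_k\psi_{j,\infty}\rangle\big),
\]
and now the error is genuinely negligible. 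The same refinement (replace $\|\Lambda_k\|$ by $\|\pi_{j,\infty}\Lambda_k\|$ in the bootstrap) is what makes your argument for Statement~2 close as well. With this correction the perturbative parts go through; the sharp constant in Statement~1 remains, as you note, the delicate computation.
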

\noindent As a consequence of Proposition \ref{pr1} we have:
\bel{bx1}
\lim_{k\to
\infty}(E_j(k)-\mathcal{E}_j)^{-1/2}\|{\pi}_{j,\infty}(k)-{(\mathbb{0}_-\oplus\pi}_j(k))\|{}=0.
    \ee

Denote by $\mathfrak{S}_p$, $p \in [1,\infty)$, the Schatten -- von Neumann class of compact operators, equipped with the norm
{$$
    \| T \|_p : = \Big(\sum_l s_l(T)^p  \Big)^{1/p},
   $$
   where $\{s_j(T)\}$ are the square root{s} of the eigenvalues of $T^*T$.}
   
   {For future references, it will be useful to have the following estimate of the difference of the eigenfunctions.} 
\begin{pr}\label{mitlef}
For any $j\in \N$ {it is satisfied}
$$
{\|\psi_j(\cdot;k)-\psi_{j,\infty}(\cdot;k)\|_{L^2(\R_+)}}=O\left(k^{-1}(E_j(k)-\mathcal{E}_j)^{1/2}\right), \quad k\to \infty.
$$
\end{pr}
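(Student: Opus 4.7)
\textbf{Proof plan for Proposition \ref{mitlef}.}
The natural strategy is to translate the projection-norm estimates already contained in Proposition~\ref{pr1} into an $L^2$-estimate on the eigenfunctions. First, I would combine parts~1 and~3 of Proposition~\ref{pr1} with the submultiplicative bound
\[
\|\pi_{j,\infty}(k)\Lambda_k\|\ \le\ \|\pi_{j,\infty}(k)\Lambda_k^{1/2}\|\,\|\Lambda_k^{1/2}\|
\]
to obtain $\|\pi_{j,\infty}(k)\Lambda_k\|=O\bigl(k^{-1}(E_j(k)-\mathcal{E}_j)^{1/2}\bigr)$. Plugging this into part~2 of Proposition~\ref{pr1} yields
\[
\bigl\|\pi_{j,\infty}(k)-\bigl(\mathbb{0}_-\oplus \pi_j(k)\bigr)\bigr\|\ =\ O\!\left(k^{-1}(E_j(k)-\mathcal{E}_j)^{1/2}\right),\qquad k\to\infty.
\]

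Next, I would exploit that both operators are rank-one orthogonal projections in $L^2(\R)$, onto the unit vectors $\psi_{j,\infty}(\cdot;k)$ and $\tilde\psi_j(\cdot;k):=0\oplus\psi_j(\cdot;k)$ respectively. The elementary identity for two rank-one orthogonal projections $P_i=|u_i\rangle\langle u_i|$ with real unit vectors reads $\|P_1-P_2\|^2=1-\langle u_1,u_2\rangle^2$. Since $\|\pi_{j,\infty}(k)-(\mathbb{0}_-\oplus\pi_j(k))\|\to 0$, one has $|\langle\psi_{j,\infty}(\cdot;k),\tilde\psi_j(\cdot;k)\rangle|\to 1$, and we may fix the sign of $\psi_j(\cdot;k)$ (permitted by \eqref{defpsij}) so that the inner product is non-negative for all sufficiently large $k$. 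This yields
\[
\|\psi_{j,\infty}(\cdot;k)-\tilde\psi_j(\cdot;k)\|_{L^2(\R)}^2\ =\ \frac{2\bigl\|\pi_{j,\infty}(k)-(\mathbb{0}_-\oplus\pi_j(k))\bigr\|^2}{1+\langle \psi_{j,\infty}(\cdot;k),\tilde\psi_j(\cdot;k)\rangle}\ \le\ 2\bigl\|\pi_{j,\infty}(k)-(\mathbb{0}_-\oplus\pi_j(k))\bigr\|^2.
\]

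Finally, I would note that the $L^2(\R)$-norm splits as
\[
\|\psi_{j,\infty}(\cdot;k)-\tilde\psi_j(\cdot;k)\|_{L^2(\R)}^2 \ =\ \int_{-\infty}^0\!|\psi_{j,\infty}(x;k)|^2\,dx\ +\ \|\psi_j(\cdot;k)-\psi_{j,\infty}(\cdot;k)\|_{L^2(\R_+)}^2,
\]
so dropping the non-negative half-line term gives the desired estimate on $L^2(\R_+)$. The only delicate point is the sign normalization in the passage from projections to vectors, but it is harmless for large $k$ because the projection difference tends to zero; the rest is a short chain of inequalities built directly on top of Proposition~\ref{pr1}.
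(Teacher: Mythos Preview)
Your argument is correct and follows essentially the same route as the paper's proof: both convert the projection estimate from Proposition~\ref{pr1} into an eigenfunction estimate via the rank-one identity relating $\|\pi_{j,\infty}(k)-(\mathbb{0}_-\oplus\pi_j(k))\|$ (operator norm in your version, Hilbert--Schmidt in the paper, equivalent up to a factor $\sqrt{2}$) to $1-\langle\psi_{j,\infty},\tilde\psi_j\rangle^2$, fix the sign so the inner product tends to $+1$, and then pass to $\|\psi_j-\psi_{j,\infty}\|_{L^2(\R_+)}$. Your organization is in fact slightly cleaner: by working on $L^2(\R)$ and simply dropping the non-negative term $\int_{-\infty}^0|\psi_{j,\infty}|^2$, you avoid the separate explicit computation of that tail (the paper's \eqref{jun23_16_b}), which is not actually needed for the upper bound.
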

\begin{proof}
Define $A(k):=\langle \psi_j(\cdot;k),\psi_{j,\infty}(\cdot;k)\rangle_{L^2(\R_+)}$, where $\langle\cdot,\cdot\rangle_{L^2(\re_+)}$ is the scalar product in $L^2(\re_+)$. Since $||(\mathbb{0}_-\oplus\pi_j(k))-\pi_{j,\infty}||_{2}^2=2(1-A(k)^2)$, we have that $A(k)^2\to 1$, as $k\to \infty$. {By} the continuity of $A(k)$ we may assume from the beginning that  $A(k)\to 1$, as $k\to \infty$.

Now, 
\bel{jun23_16}\ba{ll}&\displaystyle{\int_{\R_+}|\psi_j(x;k)-\psi_{j,\infty}(x;k)|^2\,dx}\\[1em]
=&\displaystyle{\int_{\R_+}\psi_j(x;k)^2\,dx-2\int_{\R_+}\psi_j(x;k)\psi_{j,\infty}(x;k)\,dx+\int_{\R_+}\psi_{j,\infty}(x;k)^2\,dx}\\[1em]
=&\displaystyle{2(1-A(k))-\int_{\R_-}\psi_{j,\infty}(x;k)^2\,dx}\\[1em]
=&\displaystyle{\frac{||(\mathbb{0}_-\oplus\pi_j(k))-\pi_{j,\infty}||^2_{2}}{1+A(k)}-\int_{\R_-}\psi_{j,\infty}(x;k)^2\,dx.}
\end{array}\ee
From the definition of $\psi_{j,\infty}$, straightforward calculations show that 
\bel{jun23_16_b} \int_{\R_-}\psi_{j,\infty}(x;k)^2\,dx=\frac{k^{2j-3}}{2b}e^{-b^{-1}k^2}\asymp k^{-2}(E_j(k)-{\mathcal E}_j), \quad k\to \infty.\ee
Where we have used \eqref{ivri_asymp} below. 
The statements   2. and 3. of Proposition \ref{pr1} impl{y that}
\bel{jun23_16_c} ||(\mathbb{0}_-\oplus\pi_j(k))-\pi_{j,\infty}||^2_{2}=O(||\pi_{j,\infty}(k)\Lambda_k||^2)=O(||\Lambda_k||(E_j(k)-{\mathcal E}_j)), \quad k\to \infty.\ee
Then, putting together \eqref{jun23_16},  \eqref{jun23_16_b}, \eqref{jun23_16_c}
and the  fact that $(1+A(k))^{-1} $ is bounded {for $k\to \infty$}, we finish the proof.
\end{proof}

\begin{proposition}\label{holder}
For any $j \in \N$, there exists $K_j\in\R$ such that $$ \|
\pi_j(k)-\pi_{j}(k')\|\leq C_j|k-k'|,$$ for a  constant $C_j$   independent of $k,k'\geq K_j$.
\end{proposition}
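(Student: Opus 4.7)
The strategy is to express $\pi_j(k)$ as a Riesz projection and then control the difference via the second resolvent identity. Since by \eqref{oct9} the band function $E_j$ is strictly decreasing and $\lim_{k\to\infty}E_j(k)=\mathcal{E}_j$, while $E_{j-1}(k) \to \mathcal{E}_{j-1}=\mathcal{E}_j-2b$ (with $j-1\ge 0$) and $E_{j+1}(k)\to\mathcal{E}_{j+1}=\mathcal{E}_j+2b$, we may choose $K_j$ large enough that for every $k\ge K_j$ the eigenvalue $E_j(k)$ is strictly enclosed by the circle
$$
\Gamma_j:=\{z\in\mathbb{C}\,:\,|z-\mathcal{E}_j|=b/2\},
$$
while $\operatorname{dist}(\Gamma_j,\sigma(h(k))\setminus\{E_j(k)\})\ge b/4$. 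Then for $k\ge K_j$,
$$
\pi_j(k)=-\frac{1}{2\pi i}\oint_{\Gamma_j}(h(k)-z)^{-1}\,dz.
$$

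For $k,k'\ge K_j$ the second resolvent identity gives
$$
\pi_j(k)-\pi_j(k')=-\frac{1}{2\pi i}\oint_{\Gamma_j}(h(k)-z)^{-1}\bigl[h(k')-h(k)\bigr](h(k')-z)^{-1}\,dz.
$$
A direct computation yields
$$
h(k')-h(k)=(bx-k')^2-(bx-k)^2=(k-k')\bigl[(bx-k)+(bx-k')\bigr].
$$
Thus the task reduces to bounding, uniformly for $z\in\Gamma_j$ and $k,k'\ge K_j$, the two operators $(h(k)-z)^{-1}(bx-k)(h(k')-z)^{-1}$ and $(h(k)-z)^{-1}(bx-k')(h(k')-z)^{-1}$.

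The key uniform estimate is the following form bound: since $h(k)=-\partial_x^2+(bx-k)^2$ with Dirichlet boundary condition, one has $(bx-k)^2\le h(k)$ as quadratic forms on $D(h(k)^{1/2})$, so $\|(bx-k)h(k)^{-1/2}\|\le 1$. Combined with the functional-calculus bound
$$
\|h(k)^{1/2}(h(k)-z)^{-1}\|\le \sup_{\lambda\in\sigma(h(k))}\frac{\lambda^{1/2}}{|\lambda-z|}\le C,\qquad z\in\Gamma_j,\ k\ge K_j,
$$
(which is uniform because $\sigma(h(k))\subset[b,\infty)$ and $\operatorname{dist}(\Gamma_j,\sigma(h(k)))\ge b/4$), this gives $\|(bx-k)(h(k)-z)^{-1}\|\le C$ and, by taking adjoints, $\|(h(k)-z)^{-1}(bx-k)\|\le C$ uniformly in $z\in\Gamma_j$ and $k\ge K_j$. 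The same holds with $k$ replaced by $k'$. Each of the two summands in the integrand is therefore bounded in operator norm by a constant independent of $k,k'\ge K_j$ and $z\in\Gamma_j$. Multiplying by $|k-k'|$ and by the length of $\Gamma_j$ yields
$$
\|\pi_j(k)-\pi_j(k')\|\le C_j\,|k-k'|.
$$

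The main technical point is the uniform-in-$k$ control of the unbounded multiplication operator $bx-k$ near the contour $\Gamma_j$; once the form inequality $(bx-k)^2\le h(k)$ is exploited, everything else is a bookkeeping exercise. The splitting $2bx-k-k'=(bx-k)+(bx-k')$ is what allows each of the two resolvents $(h(k)-z)^{-1}$ and $(h(k')-z)^{-1}$ to absorb the growing multiplier attached to it, which is the reason the Lipschitz constant does not deteriorate as $k,k'\to\infty$.
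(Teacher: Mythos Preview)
Your proof is correct and follows essentially the same strategy as the paper's: represent $\pi_j(k)$ as a Riesz projection over a fixed contour around $\mathcal{E}_j$ and reduce the Lipschitz bound to a uniform estimate on $(bx-k)(h(k)-z)^{-1}$, which both you and the paper extract from the form inequality $(bx-k)^2\le h(k)$. The only cosmetic difference is that the paper differentiates the Riesz integral and bounds $\|\pi_j'(k)\|$ uniformly, whereas you compare $\pi_j(k)$ and $\pi_j(k')$ directly via the second resolvent identity; your splitting $2bx-k-k'=(bx-k)+(bx-k')$ is exactly the finite-difference analogue of the paper's $h'(k)=-2(bx-k)$.
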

\begin{proof}
{Since $\pi_j$ is analytic, it is sufficient to prove that its derivative is uniformly bounded with respect to $k$ large enough. Set $K_j$ to be any number that satisfies   $E_{j-1}(K_j)<{\mathcal E}_j$.  By \eqref{oct9} and the strict monotonicity of the bands functions, we can take a contour $\gamma$  around  $E_j(k)$ such that no  other eigenvalues of $h(k)$  lie inside the region enclosed by $\gamma$ whenever $k\in (K_j,\infty)$. Then, for $k > K_j$, we have 
	$$\pi_j(k)=\frac{1}{2\pi i}\oint_\gamma (h(k)-\omega)^{-1}d\omega,$$
{and}
$$\pi^\prime_j(k)=\frac{-2}{2\pi i}\oint_\gamma (h(k)-\omega)^{-1} \, (bx-k) \,  (h(k)-\omega)^{-1} d\omega.$$
{To} conclude the proof 
{we need to show that}
 $(h(k)-\omega)^{-1}$ and $ (bx-k) \,  (h(k)-\omega)^{-1}$ are uniformly bounded with respect to $k\geq K_j$ and to $\omega \in \gamma$. 
{The first condition follows since  our choice of $K_j$} implies that $\hbox{dist}(\gamma, \cup_{k\geq K_j}E_j(k)) >0$. {For the second condition, we use} the following relations. First, {applying}  the spectral theorem, for any $u\in L^2(\R_+)$,  we have 
$$|\langle (h(k)-\omega)^{-1} u , u \rangle | \leq C \| u\|^2_{L^2(\R_+)}, \qquad C>0,$$
{and s}econd, using self-adjointness properties, we have
$$\langle (h(k)-\omega)^{-1} u , u \rangle =  
\langle (h(k)-\omega)^{-1} \Big( -\frac{d^2}{dx^2}
+ (bx-k)^2-\omega \Big) (h(k)-\omega)^{-1} u , u \rangle = $$
$$\|(bx-k)   (h(k)-\omega)^{-1} u\|^2_{L^2(\R_+)} + \|\frac{d}{dx} (h(k)-\omega)^{-1} u\|^2_{L^2(\R_+)} - \omega \|( (h(k)-\omega)^{-1} u\|^2_{L^2(\R_+)}, $$
}
 and consequently
$$\ba{ll}\|(bx-k)   (h(k)-\omega)^{-1} u\|^2_{L^2(\R_+)}  &\leq | \omega|  \|( (h(k)-\omega)^{-1} u\|^2_{L^2(\R_+)} + |  \langle (h(k)-\omega)^{-1} u , u \rangle |\\ &\leq C \| u \|^2_{L^2(\R_+)}.\ea$$
\end{proof}

\begin{pr}\label{pr3}\cite[Corollary 15.A.6]{ivrii2}
For any $j \in \N$, and $n\in \Z_+$ 
\bel{ivri_asymp} (E_j(k)-\mathcal{E}_j)^{(n)}\asymp k^{2j-1+n}e^{-b^{-1}k^2}, \quad k\to \infty. \ee
In addition 
\bel{asymband-}E_j(k)=k^2(1+o(1)), \quad k\to- \infty.\ee

\end{pr}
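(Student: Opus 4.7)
The plan has three stages: the $n=0$ asymptotic as $k\to+\infty$, its extension to all derivatives, and the regime $k\to-\infty$.

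For the $n=0$ case of \eqref{ivri_asymp} I would combine two identities. Multiplying $h(k)\psi_j=E_j\psi_j$ on $\R_+$ by $\psi_{j,\infty}(\cdot;k)$ and $h_\infty(k)\psi_{j,\infty}=\mathcal{E}_j\psi_{j,\infty}$ on $\R$ by $\psi_j(\cdot;k)$, subtracting, and integrating over $\R_+$ yields the Green's identity
\[
\psi_j'(0;k)\,\psi_{j,\infty}(0;k)\;=\;(E_j(k)-\mathcal{E}_j)\,\langle\psi_j,\psi_{j,\infty}\rangle_{L^2(\R_+)},
\]
where the boundary term at $+\infty$ vanishes by rapid decay and the one at $0$ collapses because $\psi_j(0;k)=0$. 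By Proposition \ref{mitlef} the scalar product tends to $1$, while \eqref{sof9o} together with the Hermite asymptotic $H_{j-1}(u)\sim(2u)^{j-1}$ as $u\to-\infty$ gives $|\psi_{j,\infty}(0;k)|\asymp k^{j-1}e^{-b^{-1}k^2/2}$. A virial identity, obtained by multiplying the eigenvalue equation by $\psi_j'$ and integrating on $\R_+$, gives $\psi_j'(0;k)^2=2b\langle\psi_j,(bx-k)\psi_j\rangle$; combined with the Feynman--Hellmann formula $E_j'(k)=-2\langle\psi_j,(bx-k)\psi_j\rangle$, this produces $\psi_j'(0;k)^2=-b\,E_j'(k)$. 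Substituting back, $F(k):=E_j(k)-\mathcal{E}_j$ satisfies the ODE
\[
-b\,F'(k)\,\psi_{j,\infty}(0;k)^2\;=\;F(k)^2(1+o(1)),\qquad k\to\infty,
\]
and separation of variables together with Laplace's method on $\int^{k}s^{-(2j-2)}e^{b^{-1}s^2}\,ds\sim\tfrac{b}{2}k^{-(2j-1)}e^{b^{-1}k^2}$ yields $F(k)\asymp k^{2j-1}e^{-b^{-1}k^2}$.

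For the derivatives $n\geq1$ I would induct. Differentiating the ODE and inserting the $n=0$ asymptotic for $F$ gives $F'$; iterating, each differentiation extracts a factor of $-2b^{-1}k$ from the Gaussian, so $F^{(n)}(k)\asymp k^{2j-1+n}e^{-b^{-1}k^2}$. Upper bounds can alternatively be obtained by extending $E_j$ analytically to a complex strip (via Kato's analytic perturbation theory) and invoking Cauchy's integral formula on disks of radius of order $1/k$. The matching lower bounds require carefully tracking that the leading coefficient remains nonzero through the induction; this is the main technical obstacle, since in principle spurious cancellations between leading and subleading terms could render some $F^{(n)}$ of smaller order than predicted. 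The cleanest route to rule this out is to push through a full WKB / parabolic-cylinder expansion $F(k)=c\,k^{2j-1}e^{-b^{-1}k^2}(1+a_1/k^2+a_2/k^4+\cdots)$ with explicit $c>0$, and then differentiate term-by-term.

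For \eqref{asymband-}, the lower bound is immediate: for $k\leq 0$ and $x\geq 0$ one has $(bx-k)^2=(bx+|k|)^2\geq k^2$, so $h(k)\geq k^2\,I$ as quadratic forms and hence $E_j(k)\geq k^2$. For the matching upper bound I would use variational test functions $v_j(x)=\mathrm{Ai}((2b|k|)^{1/3}x-s_j)$, where $-s_j$ is the $j$-th zero of the Airy function; these satisfy $v_j(0)=0$ and lie in $L^2(\R_+)$ by the decay of $\mathrm{Ai}$ on the positive real axis. A direct computation shows $\langle v_j,h(k)v_j\rangle=\bigl(k^2+(2b|k|)^{2/3}s_j\bigr)\|v_j\|^2+b^2\langle v_j,x^2v_j\rangle$, and since the Airy length scale gives $\langle v_j,x^2v_j\rangle/\|v_j\|^2=O(|k|^{-2/3})$, the min-max principle applied to the orthogonal family $\{v_1,\dots,v_j\}$ yields $E_j(k)\leq k^2+O(|k|^{2/3})=k^2(1+o(1))$.
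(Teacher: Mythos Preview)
The paper does not prove this proposition: it is quoted verbatim from Ivrii's book (\cite[Corollary 15.A.6]{ivrii2}). So there is no ``paper's own proof'' to compare to, and your proposal is an independent derivation. Let me assess it as such.

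\medskip

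\textbf{The $n=0$ case and the $k\to-\infty$ regime are correct.} The Green's identity, the virial computation $\psi_j'(0;k)^2=-bE_j'(k)$, the Feynman--Hellmann step, and the resulting separable ODE for $F(k)=E_j(k)-\mathcal E_j$ all check out, and the Laplace-type integration you indicate does give the two-sided bound $F(k)\asymp k^{2j-1}e^{-b^{-1}k^2}$. For $k\to-\infty$, your Airy test functions $v_l(x)=\mathrm{Ai}\bigl((2b|k|)^{1/3}x-s_l\bigr)$ are indeed mutually orthogonal (they are Dirichlet eigenfunctions of the linear potential on $\R_+$), so the min-max argument yields $E_j(k)\le k^2+O(|k|^{2/3})$; combined with the pointwise bound $(bx-k)^2\ge k^2$ this gives \eqref{asymband-}.

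One caution: you invoke Proposition \ref{mitlef} to conclude $\langle\psi_j,\psi_{j,\infty}\rangle\to 1$, but in the paper the proof of Proposition \ref{mitlef} \emph{uses} \eqref{ivri_asymp}, so as written this is circular. The fix is immediate: you only need $A(k):=\langle\psi_j,\psi_{j,\infty}\rangle_{L^2(\R_+)}\to 1$, and that follows directly from parts 1--2 of Proposition \ref{pr1} (giving $\|(\mathbb 0_-\oplus\pi_j)-\pi_{j,\infty}\|\to 0$), independently of Proposition \ref{pr3}.

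\medskip

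\textbf{The $n\ge 1$ case has a genuine gap.} Differentiating the ODE $-bF'(k)\psi_{j,\infty}(0;k)^2=F(k)^2A(k)^2$ and plugging in the $n=0$ asymptotic does produce the correct order for $F'$. But to iterate and keep a two-sided bound you need not merely $A(k)\to 1$ but control of $A'(k),A''(k),\ldots$ (equivalently, of $\partial_k^n\psi_j(0;k)$ or of the derivatives of $\langle\psi_j,\psi_{j,\infty}\rangle$), with enough precision to rule out cancellation of the leading coefficient at each step. Your ODE by itself does not supply this; the Cauchy-estimate route on disks of radius $\sim 1/k$ would give upper bounds of the right order only if you first establish that $|F(k+z)|\lesssim |F(k)|$ uniformly for $|z|\lesssim 1/k$, which again requires the finer expansion. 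The ``cleanest route'' you name --- a full asymptotic expansion $F(k)=c\,k^{2j-1}e^{-b^{-1}k^2}\bigl(1+a_1k^{-2}+\cdots\bigr)$ with $c\neq 0$, differentiated termwise --- is exactly what is carried out in Ivrii's treatment, and is not a small addendum to your argument but essentially the whole proof for $n\ge 1$. So for those derivatives your proposal is a correct outline rather than a proof.
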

To finish this part we {define}  $\varrho_j:(0,\infty)\to \R$ as the inverse function of $E_j-\mathcal{E}_j$. Evidently,  {the function $\varrho_j$  is strictly decreasing with $\lim_{s\to \infty}\varrho_j(s)=-\infty$ and }$\lim_{s\downarrow0}\varrho_j(s)=\infty$. Moreover,   the preceding proposition yields: 
\bel{ivri_asymp_2}\ba{lll}\displaystyle{\varrho_j(s)\asymp|\ln s|^{1/2};}&
\displaystyle{\varrho'_j(s)\asymp\frac{-1}{s|\ln s|^{1/2}};}&
\displaystyle{\varrho''_j(s)\asymp\frac{1}{s^{2}|\ln s|^{1/2}},  \quad  s\downarrow 0.}\ea\ee

\subsection{Analysis of $T$ on the real axis} \label{s3}
{The following step in our proof {of} Theorem \ref{firsteh} consists in the analysis of the operator $T(E+i0)$ appearing in \eqref{pssf}.  We will decompose this operator according to the band structure of $H_0$. In Proposition \ref{fom} and its proof  we describe each one of the components of the decomposition.}

\begin{pr}\label{fom}
{Suppose that} $V\geq0$ satisfi{es} \eqref{cd2}. Then, for all $j \in \N$, and all  $z \in {\mathbb C}_+$, the operator $T_j(z)$ is in  $\mathfrak{S}_1$. Furthermore,
for any $E \neq \mathcal{E}_j$,
 the limit $$ \lim_{\delta \downarrow 0} T_j(E +i\delta):=T_j(E)$$ exists in the $\mathfrak{S}_1$-norm{,} and is continuous with respect to $E\in \R \setminus \{\mathcal{E}_j\}$ in the standard operator-norm. {Moreover}, for $E \neq {\mathcal E}_j$ it {is} satisfi{ed}
 \bel{8sep16}
 \|{\rm Re}(T_j(E))\|_1=O(|\mathcal{E}_j-E|^{-1}).
 \ee
\end{pr}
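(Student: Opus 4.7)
The plan is to write $T_j(z)$ as a Hilbert--Schmidt sandwich around a scalar multiplier and then carry out a fiberwise Sokhotski--Plemelj analysis. Set
$$
\alpha_j(k)(x,y):=(2\pi)^{-1/2}|V(x,y)|^{1/2}e^{iky}\psi_j(x;k),
$$
and let $\mathcal{A}_j:L^{2}(\R_+\times\R)\to L^{2}(\R,dk)$ be the operator with kernel $(k;x,y)\mapsto\overline{\alpha_j(k)(x,y)}$, so that $(\mathcal{A}_j u)(k)=\langle\alpha_j(k),u\rangle$. Using $\pi_j(k)=|\psi_j(\cdot;k)\rangle\langle\psi_j(\cdot;k)|$ and the fibered decomposition \eqref{sant1}, a direct calculation gives
$$
T_j(z)=\mathcal{A}_j^{*}\,M_{(E_j(\cdot)-z)^{-1}}\,\mathcal{A}_j=\int_{\R}(E_j(k)-z)^{-1}|\alpha_j(k)\rangle\langle\alpha_j(k)|\,dk,
$$
where $M_\phi$ denotes multiplication by $\phi(k)$. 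The Hilbert--Schmidt norm computes as
$$
\|\mathcal{A}_j\|_2^{2}=(2\pi)^{-1}\int_{\R_+\times\R} V(x,y)\Big(\int_{\R}|\psi_j(x;k)|^{2}\,dk\Big)dx\,dy;
$$
the inner integral is uniformly bounded in $x$ because \eqref{sof9o} and the change of variables $u=b^{1/2}x-b^{-1/2}k$ give $\int|\psi_{j,\infty}(x;k)|^{2}dk=b$, and Proposition \ref{mitlef} transfers the bound to $\psi_j$. Since \eqref{cd2} with $m>2$ ensures $V\in L^{1}(\R_+\times\R)$, we obtain $\mathcal{A}_j\in\mathfrak{S}_{2}$, and for $z\in\C_+$ the inequality $\|T_j(z)\|_1\leq\|\mathcal{A}_j\|_2^{2}(\mathrm{Im}\,z)^{-1}$ yields the first assertion.

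To pass to $\delta\downarrow 0$ at $E\neq\mathcal{E}_j$ I would distinguish two cases. If $E<\mathcal{E}_j$, then $\inf_k|E_j(k)-E|=\mathcal{E}_j-E>0$ and $\mathfrak{S}_1$-dominated convergence gives $T_j(E+i\delta)\to T_j(E)$ with $\|T_j(E)\|_1\leq\|\mathcal{A}_j\|_2^{2}(\mathcal{E}_j-E)^{-1}$, which already proves \eqref{8sep16} in this regime. If $E>\mathcal{E}_j$, strict monotonicity of $E_j$ (Proposition \ref{pr3}) produces a unique $k_{*}=\varrho_j(E-\mathcal{E}_j)$ with $E_j(k_{*})=E$ and $E_j'(k_{*})\neq 0$. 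Choose $\chi\in C_0^\infty(\R)$ equal to $1$ near $k_{*}$ and supported in a small neighborhood. The $(1-\chi)$-piece has uniformly bounded multiplier for small $\delta$ and converges in $\mathfrak{S}_1$ by dominated convergence; for the cutoff piece I would decompose
$$
\int\frac{\chi(k)}{E_j(k)-z}|\alpha_j(k)\rangle\langle\alpha_j(k)|\,dk=\Big(\int\frac{\chi(k)\,dk}{E_j(k)-z}\Big)|\alpha_j(k_{*})\rangle\langle\alpha_j(k_{*})|+R_j(z),
$$
where $R_j(z)$ collects the difference $|\alpha_j(k)\rangle\langle\alpha_j(k)|-|\alpha_j(k_{*})\rangle\langle\alpha_j(k_{*})|$. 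The scalar integral converges as $\delta\downarrow 0$ by Sokhotski--Plemelj to a principal value plus $i\pi/|E_j'(k_{*})|$. For $R_j(z)$, Proposition \ref{holder} combined with the rank-one identity $\|\,|u\rangle\langle u|-|v\rangle\langle v|\,\|_1\leq\|u-v\|(\|u\|+\|v\|)$ delivers
$$
\|\,|\alpha_j(k)\rangle\langle\alpha_j(k)|-|\alpha_j(k_{*})\rangle\langle\alpha_j(k_{*})|\,\|_1\leq C|k-k_{*}|^{\gamma}
$$
for some $\gamma>0$ depending on $m$, taming the simple pole $|k-k_{*}|^{-1}$ into an integrable majorant and giving $\mathfrak{S}_1$-convergence of $R_j(z)$. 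Continuity of $T_j$ in operator norm on $\R\setminus\{\mathcal{E}_j\}$ then follows from the smooth $E$-dependence of $k_{*}(E)$, $E_j'(k_{*}(E))$, and $\alpha_j(k_{*}(E))$.

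Finally, for the bound $\|\mathrm{Re}\,T_j(E)\|_1=O(|\mathcal{E}_j-E|^{-1})$ when $E>\mathcal{E}_j$: in the above decomposition the $(1-\chi)$-contribution and $\mathrm{Re}\,R_j(E)$ remain $O(1)$ uniformly as $E\downarrow\mathcal{E}_j$, while the rank-one principal-value term is bounded by $\|\alpha_j(k_{*})\|^{2}/|E_j'(k_{*})|$. By \eqref{ivri_asymp}, $|E_j'(k_{*})|\asymp k_{*}^{2j-1}e^{-k_{*}^{2}/b}\asymp E-\mathcal{E}_j$, and $\|\alpha_j(k_{*})\|^{2}$ is uniformly bounded in $k_{*}$, so the ratio is $O(|E-\mathcal{E}_j|^{-1})$, as required. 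The main obstacle is the $\mathfrak{S}_1$-convergence of the cutoff piece near the real pole $k_{*}$: one must isolate the Sokhotski--Plemelj rank-one contribution and verify, via the H\"older regularity of $\alpha_j$ stemming from Proposition \ref{holder}, that the remainder converges in \emph{trace} norm and not merely in operator norm. The matching of the modulus of continuity of $\alpha_j$ against the simple-pole order of the scalar singularity is the technical heart of the argument, and is the only place where the sharpness of $m>2$ is exploited beyond the $L^{1}$-integrability of $V$.
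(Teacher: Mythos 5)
Your factorization $T_j(z)=\mathcal{A}_j^{*}M_{(E_j-z)^{-1}}\mathcal{A}_j$ is natural, but the claim that $\mathcal{A}_j\in\mathfrak{S}_2$ — i.e.\ that $\int_\R\|\alpha_j(k)\|^2\,dk<\infty$ — is false for generic $V$ satisfying \eqref{cd2}, and the argument you give for it has a gap. You want $\sup_x\int_\R\psi_j(x;k)^2\,dk<\infty$ and propose to deduce it from $\int_\R\psi_{j,\infty}(x;k)^2\,dk=b$ via Proposition~\ref{mitlef}. But Proposition~\ref{mitlef} controls $\|\psi_j(\cdot;k)-\psi_{j,\infty}(\cdot;k)\|_{L^2(\R_+)}$, which is an integral in $x$, not a pointwise estimate, and it holds only as $k\to+\infty$; it says nothing about $k\to-\infty$. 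In fact, as $k\to-\infty$ the potential $(bx-k)^2$ is strictly increasing on $\R_+$, so the Dirichlet eigenfunction $\psi_j(\cdot;k)$ concentrates near $x=0$ on a shrinking scale $\sim|k|^{-1/3}$. Consequently $\|\alpha_j(k)\|^2=(2\pi)^{-1}\int V(x,y)\psi_j(x;k)^2\,dx\,dy\to(2\pi)^{-1}\int_\R V(0,y)\,dy$, a nonzero constant whenever $V(0,\cdot)\not\equiv 0$, and the integral $\int_\R\|\alpha_j(k)\|^2\,dk$ diverges at $-\infty$. Equivalently, by Fubini the quantity $\int_\R\psi_j(x;k)^2\,dk$ blows up as $x\downarrow 0$, so your formula for $\|\mathcal{A}_j\|_2^2$ is $+\infty$. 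The inequality $\|T_j(z)\|_1\le\|\mathcal{A}_j\|_2^2(\operatorname{Im}z)^{-1}$ is therefore vacuous, and the first assertion of the proposition is not established.

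The paper avoids this by never separating the scalar multiplier from the rank-one integrand: it bounds $\int_\R\|\alpha_j(k)\|^2/|E_j(k)-z|\,dk$ directly, using $E_j(k)\sim k^2$ as $k\to-\infty$ (\eqref{asymband-}) to supply the decay $|E_j(k)-z|^{-1}\sim k^{-2}$ at $-\infty$, while at $+\infty$ it compares with the Gaussian profile $\psi_{j,\infty}$ via \eqref{l12}, Young's inequality (this is where $m>2$ enters), and Propositions~\ref{pr1} and~\ref{pr3}. Any proof along your lines must keep the weight $(E_j(\cdot)-z)^{-1}$ attached when passing to trace norms.

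There are also inaccuracies in your threshold bound. From \eqref{ivri_asymp} one has $E_j'(k_*)\asymp k_*^{2j}e^{-k_*^2/b}\asymp k_*\bigl(E_j(k_*)-\mathcal{E}_j\bigr)\asymp\lambda|\ln\lambda|^{1/2}$ (equivalently $\varrho_j'(\lambda)\asymp -1/(\lambda|\ln\lambda|^{1/2})$ by \eqref{ivri_asymp_2}), not $\asymp\lambda$ as you state. Moreover the $(1-\chi)$-piece is \emph{not} $O(1)$ as $\lambda=E-\mathcal{E}_j\downarrow 0$: on $(k_*+\delta,\infty)$ one has $E-E_j(k)\to\lambda$, so that piece contributes $O\bigl(\lambda^{-1}\int_{k_*+\delta}^\infty\|\alpha_j(k)\|^2dk\bigr)=O(\lambda^{-1})$ and in fact carries the leading order of \eqref{8sep16}. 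The paper's proof cuts at $s=\lambda/2$ and $s=3\lambda/2$ in the variable $s=E_j(k)-\mathcal{E}_j$ and estimates each region separately, and that splitting is where the bound actually comes from; it cannot be absorbed into an $O(1)$ remainder as your decomposition suggests.
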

\begin{proof}
Define the operator-valued function $G_j:\R\to \mathfrak{S}_2(L^2(\R_+\times\R);\C)$, by

\bel{dec23_2}G_j(k)u:=\frac{1}{\sqrt{2\pi}} \int_{\R}\int_{\R_+}e^{-iky}V^{1/2}(x,y)\psi_j(x;k)u(x,y)\,dxdy, \quad u \in L^2(\R_+\times\R).\ee
For any fixed $z$ with Im $z>0$, let us prove that
\bel{12aug16}
||G_j^*(\cdot)G_j(\cdot)/(E_j(\cdot)-z)||_{1} \in L^1(\R).
\ee

First note that the function $G_j^*G_j$ 
is locally Lipschitz and then measurable. Moreover, 
\bel{29jul16}\|G_j(k)^*G_j(k)-G_j(k')^*G_j(k')\|_{1}\leq C_j |k-k'|, \ee
where $C_j>0$ and is independent of $k,k'$  big enough. {Using that $G_j(k)^*G_j(k)$ is a finite rank operator,} these  properties follow from 
\bel{oct2}\ba{ll}
&\|G_j(k)^*G_j(k)-G_j(k')^*G_j(k')\|^2_{2}\\[.5em]
\leq &\displaystyle{\frac{1}{2\pi}\left(\sup_{x \in R_+}\int_\R V(x,y)\,dy\right)^2\int_{\R_+}|\psi_j(x;k)\psi_j(x';k)-\psi_{j}(x;k')
\psi_{j}(x';k')|^2\,dxdx'}\\[1em]
=&\displaystyle{\frac{1}{2\pi}\left(\sup_{x \in R_+}\int_\R V(x,y)\,dy\right)^2||\pi_j(k)-\pi_{j}(k')||^2_{2}}.
\ea\ee
The Lipschitz condition follows from \eqref{oct2} and the fact that the projection $\pi_j$ depends analytically  on $k$. {Inequality} \eqref{29jul16} follows from Proposition \ref{holder}.


Next, set $G_{j,\infty}:\R\to
\mathfrak{S}_2(L^2(\R_+\times\R);\C)$  as the operator valued function given by
\bel{nov19}G_{j,\infty}(k)u:=\frac{1}{\sqrt{2\pi}} \int_{\R}\int_{\R_+}e^{-iky}V^{1/2}(x,y)\psi_{j,\infty}(x;k)u(x,y)\,dxdy, \quad u \in L^2(\R_+\times\R).\ee
The function $\psi_{j,\infty}$ being defined in \eqref{sof9o}.

Obviously, for any $k \in \R$ the operators  $G_j(k)^*G_j(k),\, {G_{j,\infty}(k)^*G_{j,\infty}(k)}
:L^2(\R_+\times\R) \to L^2(\R_+\times\R)$ are of rank one, hence  the{ir} Hilbert-Schmidt and trace norms coincide. {Based on this property we will  use} the estimate
\bel{ld6}
\frac{||G_j(k)^*G_j(k)||_{1}}{|E_j(k)-z|} \leq
\frac{||G_{j,\infty}(k)^*G_{j,\infty}(k)||_{2}}{|E_j(k)-z|}+
\frac{ ||G_j(k)^*G_j(k)-G_{j,\infty}(k)^*G_{j,\infty}(k)||_{2}}{|E_j(k)-z|}.
\ee
For the first term on the r.h.s. of \eqref{ld6}, we can see that \eqref{cd2} implies
\bel{l12}\ba{ll}
&||G_{j,\infty}(k)^*G_{j,\infty}(k)||^2_{2}\\[.7em]
=&\displaystyle{\frac{1}{2\pi}\int_{\R^2}\int_{\R_+^2} V(x,y)V(x',y')\; |\psi_{j,\infty}(x;k)\psi_{j,\infty}(x';k)|^2\,dxdx'\,dydy'}\\[1em]
\leq&\displaystyle{\frac{1}{2\pi}\left(\int_\R\langle y\rangle
^{-m/2}\,dy\right)^2\left(\int_{\R}\langle
x\rangle^{-m/2}\psi_{j,\infty}(x;k)^2\,dx\right)^2}.
\ea\ee

Now, from {\eqref{sof9o}}
$$\int_{\R} \langle
x\rangle^{-m/2}\psi_{j,\infty}(x;k)^2dx=b^{1/4}\left(\varphi_j^2(b^{1/2}\,\cdot)*\langle
\,\cdot\,\rangle^{-m/2}\right){{(b^{-1}k)}},$$ 
with 
$\varphi_j$ {being} introduced in \eqref {61}. Thus, Young's
inequality together with $m>2$ imply that
${||G_{j,\infty}(\cdot)^*G_{j,\infty}(\cdot)||}_{2} \in L^{1}(\R)$. Using
that $(E_j(k)-z)^{-1}$ is bounded {for ${\rm Im} z >0$}, we get the integrability of the first term in the r.h.s. of \eqref{ld6}.

The second term  of the r.h.s. of \eqref{ld6} is shown to be integrable in the
following way. For $k$ {large and {non-}positive} we use the integrability of
$(E_j(k)-z)^{-1}$ (see \eqref{asymband-}) and the bounds
$$||G_j(k)^*G_j(k)||_{2},\, ||G_{j,\infty}(k)^*G_{j,\infty}(k)||_{2}\leq \frac{1}{\sqrt{2\pi}}\sup_{x\in\R_+}\int_\R V(x,y) \,dy.$$
On the other side, {arguing as in \eqref{oct2}, \eqref{l12}} 
\bel{ld1}\ba{ll}
&||G_j(k)^*G_j(k)-G_{j,\infty}(k)^*G_{j,\infty}(k)||^2_{2}\\[.7em]
\leq&\displaystyle{\frac{1}{2\pi}\left(\sup_{x \in \R_+}\int_\R V(x,y)\,dy\right)^2||(\mathbb{0}_-\oplus\pi_j(k))-\pi_{j,\infty}(k)||^2_{2}}.
\ea\ee
Then, by {part} 2. of Proposition \ref{pr1}
$$
||G_j(k)^*G_j(k)-G_{j,\infty}(k)^*G_{j,\infty}(k)||_{2}\leq C||\Lambda_k {\pi}_{j,\infty}(k)||,$$
for all sufficiently large $k$, which together with {part} 3. of Proposition \ref{pr1}, and Proposition \ref{pr3} imply
 the integrability of
$||G_j(k)^*G_j(k)-G_{j,\infty}(k)^*G_{j,\infty}(k)||_{2}$ at
infinity.

As a  consequence of \eqref{12aug16}, the operator
$\int_{\R} \frac{G_{j}(k)^*G_{j}(k)}{E_j(k)-z}dk$ is well defined {and belongs to}  $\mathfrak{S}_1$, where the integral is understood in
the Bochner sense, and further,  it is easy to see that  for ${\rm Im} \,z>0$
$$T_j(z)=\int_{\R} \frac{G_{j}(k)^*G_{j}(k)}{E_j(k)-z}\,dk.$$
{This implies that $T_j(z)$ is in $\mathfrak{S}_1$.}

Next, suppose that $z=\mathcal{E}_j+\lambda +i\delta$, with
$\lambda, \delta \in \R$. Recall that  $\varrho_j$ is the inverse function of
 $E_j-\mathcal{E}_j$. Hence, making the change of variables $E_j(k)-\mathcal{E}_j \mapsto s$,
we get \bel{l13}T_j(\mathcal{E}_j+\lambda
+i\delta) =\int_{-\infty}^{\infty}\frac{G_{j}(k)^*G_{j}(k)}{E_j(k)-\mathcal{E}_j-\lambda
 -i\delta}\,dk =\int_0^\infty
\frac{G_{j}(\varrho_j(s))^*G_{j}(\varrho_j(s))}{\lambda-s
+i\delta}\,\varrho_j'(s)\, ds.\ee

If $\lambda <0$,  by Lebesgue dominated convergence theorem
\bel{ld14}\ba{rcl}\displaystyle{
\lim_{\delta \downarrow 0} T_j(\mathcal{E}_j+\lambda +i\delta)}&=&\displaystyle{\int_{-\infty}^{\infty}\frac{G_{j}(k)^*G_{j}(k)}{E_j(k)-\mathcal{E}_j-\lambda}\,dk}\\[1em]
=\displaystyle{\int_0^\infty
\frac{G_{j}(\varrho_j(s))^*G_{j}(\varrho_j(s))}{\lambda-s}\varrho_j'(s)\,ds}&=&\displaystyle{V^{1/2}{\mathcal F}^*\int_\R^\oplus\frac{\pi_j(k)}{E_j(k)-\mathcal{E}_j-\lambda}\,dk\,{\mathcal F}V^{1/2}},
\ea\ee
in the $\mathfrak{S}_1$ norm.

If $\lambda >0$, write
$$\ba{ll}&\displaystyle{\int_0^\infty \frac{G_{j}(\varrho_j(s))^*G_{j}(\varrho_j(s))}{\lambda-s +i\delta}\varrho_j'(s)\,ds}\\[1em]
=&\displaystyle{\int_0^\infty \frac{G_{j}(\varrho_j(s))^*G_{j}(\varrho_j(s))}{{(\lambda-s)^2 +
\delta^2}}(\lambda-s)\varrho_j'(s)\,ds-i\delta\int_0^\infty
\frac{G_{j}(\varrho_j(s))^*G_{j}(\varrho_j(s))}{(\lambda-s)^2+\delta^2}\varrho_j'(s)}\,ds.
\ea$$

Then, it is easy to see that
$$\lim_{\delta \downarrow 0} i\delta\int_0^\infty \frac{G_{j}(\varrho_j(s))^*G_{j}(\varrho_j(s))}{(\lambda-s)^2+\delta^2}\varrho_j'(s) \,ds=
i \pi G_j(\varrho_j(\lambda))^*
G_j(\varrho_j(\lambda))\varrho_j'(\lambda).$$ 
{On the other side}, using  the analyticity
of $\varrho_j$ as well as  \eqref{29jul16}, we can
prove that
\bel{8sep16a}\ba{ll}&\displaystyle{\lim_{\delta \downarrow 0 }\int_0^\infty \frac{G_{j}(\varrho_j(s))^*G_{j}(\varrho_j(s))}{(\lambda-s)^2+\delta^2}(\lambda-s)\varrho_j'(s)\,ds=
\mbox{p.v.}\int_0^\infty
\frac{G_{j}(\varrho_j(s))^*G_{j}(\varrho_j(s))}{(\lambda-s)}\varrho_j'(s)\,ds}\\[1em]
=&\displaystyle{\int_0^{\lambda-\epsilon_\lambda}\frac{G_{j}(\varrho_j(s))^*G_{j}(\varrho_j(s))}{(\lambda-s)}\varrho_j'(s)\,ds+
\int_{\lambda+\epsilon_\lambda}^\infty\frac{G_{j}(\varrho_j(s))^*G_{j}(\varrho_j(s))}{(\lambda-s)}\varrho_j'(s)\,ds}\\[1em]
+&\displaystyle{\int_0^{\epsilon_\lambda}G_{j}(\varrho_j(\lambda+s))^*G_{j}(\varrho_j(\lambda+s))\varrho_j'(\lambda+s)-
{G_{j}(\varrho_j(\lambda-s))^*G_{j}(\varrho_j(\lambda-s))}\varrho_j'(\lambda-s)\frac{ds}{s}},\ea\ee
{for any $\epsilon_\lambda\in(0,\lambda).$}

Thus, for $\lambda>0$ 
 \bel{ld15} T_j({\mathcal E}_j+\lambda)=\mbox{p.v.}\int_0^\infty
\frac{G_{j}(\varrho_j(s))^*G_{j}(\varrho_j(s))}{(\lambda-s)}\varrho_j'(s)\,ds
-i \pi G_j(\varrho_j(\lambda))^*
G_j(\varrho_j(\lambda))\varrho_j'(\lambda).\ee

It is obvious that  for  $\lambda <0$
\bel{ld7}
T_j(\mathcal{E}_j+\lambda)^*=T_j(\mathcal{E}_j+\lambda)= {\int_0^\infty \frac{G_{j}(\varrho_j(s))^*G_{j}(\varrho_j(s))}{(\lambda-s)}\varrho_j'(s)\,ds }
,\ee and  for
$\lambda >0$
\bel{ld16}\begin{array}{l}\mbox{Re}\,T_j({\mathcal E}_j+\lambda)=\mbox{p.v.}\displaystyle{\int_0^\infty \frac{G_{j}(\varrho_j(s))^*G_{j}(\varrho_j(s))}{(\lambda-s)}\varrho_j'(s)\,ds}\\[1em]
\mbox{Im}\,T_j({\mathcal E}_j+\lambda)=-\pi
G_j(\varrho_j(\lambda))^*
G_j(\varrho_j(\lambda))\varrho_j'(\lambda).\end{array}\ee
Then, the continuity property of T$_j$ follows immediately from  { \eqref{ld14}, }
\eqref{ld15}, the continuity of $\varrho_j$ and  the continuity of $G_{j}^*G_{j}$.

Finally,  we prove \eqref{8sep16}.  For  $\lambda>0${, we} take into account  \eqref{ld16}, \eqref{8sep16a} {with $\epsilon_\lambda=\lambda/2$}, the inequalities  
$$\left\|\int_0^{\lambda/2}\frac{G_{j}(\varrho_j(s))^*G_{j}(\varrho_j(s))}{(\lambda-s)}\varrho_j'(s)\,ds
\right\|_1\leq \frac{2}{\lambda}\int_{\varrho_j(\lambda/2)}^\infty\left\|G_j(k)^*G_j(k)\right\|_1\,dk=O(\lambda^{-1}),$$
$$\left\|\int_{3\lambda/2}^\infty\frac{G_{j}(\varrho_j(s))^*G_{j}(\varrho_j(s))}{(\lambda-s)}\varrho_j'(s)\,ds
\right\|_1\leq {\frac{{C}}{\lambda}}\int^{\varrho_j(3\lambda/2)}_{0}\left\|G_j(k)^*G_j(k)\right\|_1\,dk {\,+\,O(1)}=O(\lambda^{-1}),$$
and, as a special case in  the proof of Lemma \ref{estiPV} below:
$$\left\|{\rm p.v.}\int_{\lambda/2}^{3\lambda/2}\frac{G_{j}(\varrho_j(s))^*G_{j}(\varrho_j(s))\varrho_j'(s)}{s-\lambda}ds\right\|_1=O\left(\frac{\lambda^{-1}}{|\ln \lambda|^{1/2}}\right).$$
For $\lambda<0$, from { \eqref{asymband-}}, \eqref{ld7} and  \eqref{ld14}  
$$\|{\rm Re}\, T_j(\lambda)\|_1\leq C\,\sup_{k\in\R_+}|E_j(k)-\mathcal{E}_j-\lambda|^{-1}\int_{0}^\infty\|G_j(k)^*G_j(k)\|_1\,dk  \,+O(1)=O(\lambda^{-1}).$$

\end{proof}

For $j\in \N$, {let us introduce the projector}
$$P_j^+:={\mathcal F}^*\int_\R^\oplus	\sum_{l>j}\pi_l(k)\,dk\,{\mathcal F},$$
and for $z\in{ \C_+}$
{
$$T_j^+(z):=V^{1/2}P_j^+(H_0-z)^{-1}P_j^+V^{1/2}=V^{1/2}{\mathcal F}^*\int_\R^\oplus	\sum_{l>j}(E_l(k)-z)^{-1}\pi_l(k)\,dk\,{\mathcal F}V^{1/2}.$$}
In both cases the infinite sums being understood in the strong sense.
\begin{lemma}\label{posipart}
Let $j \in \N$. Then, for $E\in(-\infty,{\mathcal E}_{j+1})$ the limit  
    \bel{sof11}
\lim_{\delta \downarrow 0} T^+_j(E+i\delta)=:T_j^+(E)
    \ee
    exists in the norm sense {and is self-adjoint}. Moreover, the function $T_j^+:\overline{\C_+}\setminus[{\mathcal E}_{j+1},\infty)\to {\mathfrak S}_2$ is continuous, and for $E\in(-\infty,{\mathcal E}_{j+1})$
\bel{8sep16b}\|T_j^+(E)\|_{{\mathfrak S}_2}\leq C\,{\mathcal E}_{j+1}({\mathcal E}_{j+1}-E)^{-1},\ee   
 where the constant $C$ is independent of $E$.   
\end{lemma}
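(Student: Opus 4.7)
The key structural fact is the spectral gap provided by $P_j^+$: by \eqref{oct9} and the strict monotonicity of each band function $E_l$, the operator $P_j^+H_0P_j^+$ has spectrum contained in $[\mathcal{E}_{j+1},\infty)$, so the spectral theorem delivers
\[
\bigl\|(H_0-z)^{-1}P_j^+\bigr\|\leq (\mathcal{E}_{j+1}-\mathrm{Re}\,z)^{-1},\qquad \mathrm{Re}\,z<\mathcal{E}_{j+1}.
\]
This gap estimate is the replacement for the delicate fibered analysis carried out in Proposition~\ref{fom}, where the energy was allowed to cross the band $E_j(\cdot)$.

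Since $P_j^+$ commutes with $H_0$, I would first rewrite $T_j^+(z)=V^{1/2}(H_0-z)^{-1}P_j^+V^{1/2}$, fix some $E_0<\inf\sigma(H_0)$, and apply the first resolvent identity
\[
(H_0-z)^{-1}P_j^+=(H_0-E_0)^{-1}P_j^++(z-E_0)(H_0-E_0)^{-1}(H_0-z)^{-1}P_j^+.
\]
Each of the two resulting pieces of $T_j^+(z)$ factors through the Hilbert--Schmidt operator $V^{1/2}(H_0-E_0)^{-1}$ (from the diamagnetic inequality recalled in the Introduction); the remaining factors are either $V^{1/2}$ (bounded by \eqref{cd2}) or $(H_0-z)^{-1}P_j^+$ (controlled by the gap estimate above). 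This yields $T_j^+(z)\in\mathfrak{S}_2$, together with the bound \eqref{8sep16b} after grouping the two contributions and absorbing $|z-E_0|$ into the prefactor $C\mathcal{E}_{j+1}$.

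For the existence of the boundary limit \eqref{sof11} and the joint $\mathfrak{S}_2$-continuity of $T_j^+$ on $\overline{\C_+}\setminus[\mathcal{E}_{j+1},\infty)$, I would apply the second resolvent identity
\[
T_j^+(z_1)-T_j^+(z_2)=(z_1-z_2)\,V^{1/2}(H_0-z_1)^{-1}(H_0-z_2)^{-1}P_j^+V^{1/2},
\]
and use the same factorization trick through $V^{1/2}(H_0-E_0)^{-1}$ to obtain a Hilbert--Schmidt Cauchy bound of order $|z_1-z_2|$ on compact subsets of the resolvent region. Self-adjointness of $T_j^+(E)$ for real $E<\mathcal{E}_{j+1}$ is then immediate, since in that regime $(H_0-E)^{-1}P_j^+$ is a bounded self-adjoint operator sandwiched between the real multiplications $V^{1/2}$. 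The main technical point is merely the bookkeeping needed to match the precise form of the constant in \eqref{8sep16b} (a naive application of the resolvent identity produces an additive $+1$ that must be absorbed via the factor $\mathcal{E}_{j+1}$ in the numerator); conceptually the proof is a routine spectral-gap perturbation argument.
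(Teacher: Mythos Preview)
Your proposal is correct and follows essentially the same strategy as the paper: use that the spectrum of $P_j^+H_0P_j^+$ is $[\mathcal{E}_{j+1},\infty)$, then factor $T_j^+$ through a fixed Hilbert--Schmidt resolvent of $H_0$ to obtain the $\mathfrak{S}_2$ membership, the bound \eqref{8sep16b}, and the Lipschitz continuity. The only difference is in the factorization used: instead of your first-resolvent-identity splitting $(H_0-z)^{-1}P_j^+=(H_0-E_0)^{-1}P_j^++(z-E_0)(H_0-E_0)^{-1}(H_0-z)^{-1}P_j^+$, the paper writes directly
\[
T_j^+(E)=\bigl[V^{1/2}P_j^+(H_0-E)^{-1}H_0\bigr]\bigl[H_0^{-1}V^{1/2}\bigr],
\]
and bounds the first factor by $\|V^{1/2}\|\sup_{t\ge\mathcal{E}_{j+1}}t/(t-E)=\|V^{1/2}\|\,\mathcal{E}_{j+1}(\mathcal{E}_{j+1}-E)^{-1}$ via the spectral theorem. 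This yields the precise form of \eqref{8sep16b} in one line and avoids the bookkeeping you flag (the additive $+1$ to be absorbed into $C\mathcal{E}_{j+1}$); the continuity estimate is obtained by the same factorization applied to $T_j^+(E_1)-T_j^+(E_2)$. Otherwise the arguments coincide.
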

\begin{proof} 
Due to the band structure \eqref{sant1},  the spectrum of  $P_j^+H_0P_j^+$ is $[\mathcal{E}_{j+1},\infty)$, and then
the operator valued function $T_j^+(\cdot)$ is analytic in $\overline{\C_+}\setminus[{\mathcal E}_{j+1},\infty)$. This implies in particular the existence of the limit \eqref{sof11} {and the self-adjointness of $T_j^+(E)$}.

{In the  proof of  \eqref{8sep16b}} for  $E\in(-\infty,{\mathcal E}_{j+1})$,  {since $H_0^{-1}V^{1/2} \in \mathfrak{S}_2$}, we use 
$$\ba{rl}\|T_j^+(E)\|_{2}=\| V^{1/2}P_j^+(H_0-E)^{-1}V^{1/2}\|_{2}&\leq \| V^{1/2}P_j^+(H_0-E)^{-1}H_0\|\|H_0^{-1}V^{1/2}\|_{2}\\[.5em]
&\displaystyle{\leq    \| V^{1/2}\| \|H_0^{-1}V^{1/2}\|_{2}\frac{\mathcal{E}_{j+1}}{{\mathcal E}_{j+1}-E}.}\ea
$$
In the same {manner} it can be shown that  for $E_1, E_2 \in (-\infty,{\mathcal E}_{j+1})$
$$\|T_j^+(E_1)-T_j^+(E_2)\|_{2}\leq |E_1-E_2| \| V^{1/2}\| \|H_0^{-1}V^{1/2}\|_{2} \frac{\mathcal{E}_{j+1}}{({\mathcal E}_{j+1}-E_1)({\mathcal E}_{j+1}-E_2)},$$
which implies the continuity of $T_j^+$ in the Hilbert-Schmidt norm.
\end{proof}

\begin{lemma}\label{propini}
Let $z= E+i \delta \in {\mathbb C}_+$. Then for all
$E \in \R\setminus \{{\mathcal E}_l\}_{l\in \N}$ the norm limit $\lim _{\delta
\downarrow 0}T(E +i\delta)=T(E +i0)$ exists  {in the Hilbert-Schmidt  class}. If for some $j \in \N$, $E \in
(\mathcal{E}_{j-1},\mathcal{E}_j)\cup (\mathcal{E}_j,{\mathcal
E}_{j+1})$,  then
\bel{28mar17}
T(E+i0)=T_j^-(E)+T_j(E)+T_j^+(E),
\ee
where $T^-_j(E)=\sum_{l=1}^{j-1}T_l(E)$. Moreover,
\bel{prd31}
{\rm Re}\,T(E+i0)= {\rm Re}\,T_j^-(E)+{\rm Re}\,T_j(E)+T_j^+(E)
\ee
\bel{prd32}
{\rm Im}\,T(E+i0)={\rm Im}\,T_j^-(E)+{\rm Im}\,T_j(E),
\ee
and we have  continuous dependence on $\R \setminus\{{\mathcal E}_l; l\in \N \}$, of \,${\rm Re}\,T(\cdot +i0)$ with the standard operator norm, and of ${\rm Im}\,T(\cdot + i0)$ in the trace class norm.
\end{lemma}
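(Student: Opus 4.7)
The starting point is the fibered decomposition \eqref{sant1}, which gives for $z \in \C_+$
\bel{plandec}
T(z) = V^{1/2}{\mathcal F}^*\!\int_{\R}^{\oplus}(h(k)-z)^{-1}\,dk\,{\mathcal F}V^{1/2}
= V^{1/2}{\mathcal F}^*\!\int_{\R}^{\oplus}\sum_{l=1}^{\infty}\frac{\pi_l(k)}{E_l(k)-z}\,dk\,{\mathcal F}V^{1/2},
\ee
the series converging strongly on $\mathcal{F}(D(H_0))$. For $E \in (\mathcal{E}_{j-1},\mathcal{E}_j)\cup(\mathcal{E}_j,\mathcal{E}_{j+1})$ my plan is to split \eqref{plandec} into the three blocks $l\le j-1$, $l=j$, and $l\ge j+1$, recognising them respectively as $T_j^-(z):=\sum_{l=1}^{j-1}T_l(z)$, $T_j(z)$ and $T_j^+(z)$, and then to take the limit $\delta\downarrow 0$ term by term using the results already established.

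For the block $l\le j$ I use Proposition \ref{fom}: each $T_l(z)$ is trace class, and the trace-norm limit $T_l(E):=\lim_{\delta\downarrow 0}T_l(E+i\delta)$ exists and is continuous in $E$ away from $\mathcal{E}_l$; since $E\neq\mathcal{E}_l$ for every $l\le j$ under our assumption, the sum $T_j^-(E)+T_j(E)$ is therefore well defined, trace class, and continuous in $\|\cdot\|_1$. For the block $l\ge j+1$ I invoke Lemma \ref{posipart}: the operator-norm limit $T_j^+(E)=\lim_{\delta\downarrow 0}T_j^+(E+i\delta)$ exists, is self-adjoint, lies in $\mathfrak{S}_2$, and depends continuously on $E\in(-\infty,\mathcal{E}_{j+1})$ in the Hilbert--Schmidt norm. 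Adding the three contributions gives \eqref{28mar17} and shows that $T(E+i0)$ exists as a Hilbert--Schmidt limit of $T(E+i\delta)$.

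The separation into real and imaginary parts is then immediate. Since $T_j^+(E)$ is self-adjoint, $\mathrm{Im}\,T_j^+(E)=0$, which yields \eqref{prd32} and the identity \eqref{prd31}. Trace-norm continuity of $\mathrm{Im}\,T(\cdot+i0)$ follows from the trace-norm continuity of $\mathrm{Im}\,T_l$ for $l\le j$ provided by Proposition \ref{fom}. For $\mathrm{Re}\,T(\cdot+i0)$, the contribution $\mathrm{Re}\,T_j^-(E)+\mathrm{Re}\,T_j(E)$ is trace-norm continuous (hence operator-norm continuous), while the Hilbert--Schmidt continuity of $T_j^+(E)$ given by Lemma \ref{posipart} dominates the operator norm; combining yields the claimed operator-norm continuity of $\mathrm{Re}\,T(\cdot+i0)$.

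The only genuinely delicate point is the justification that the tail $\sum_{l>j}T_l(z)$ indeed coincides with $T_j^+(z)$ and that its boundary value can be extracted without touching Proposition \ref{fom} (which would require trace-class bounds uniform in $l$, a priori unavailable). I plan to bypass this by writing the tail from the outset as $V^{1/2}P_j^+(H_0-z)^{-1}P_j^+V^{1/2}$, in which $P_j^+=\sum_{l>j}\mathcal{F}^*\!\int^\oplus\pi_l(k)\,dk\,\mathcal{F}$ commutes with $H_0$, so that Lemma \ref{posipart} applies directly and no interchange of limit with an infinite series is needed. Everything else is routine bookkeeping built from \eqref{plandec}.
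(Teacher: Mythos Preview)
Your plan is essentially the paper's proof: split $T(z)$ according to the band index into $T_j^-(z)+T_j(z)+T_j^+(z)$, apply Proposition~\ref{fom} to the first two pieces and Lemma~\ref{posipart} to the tail, and read off \eqref{28mar17}--\eqref{prd32} from the self-adjointness of $T_j^+(E)$. Your remark that the tail should be handled as $V^{1/2}P_j^+(H_0-z)^{-1}P_j^+V^{1/2}$ rather than as an infinite sum of $T_l$'s is exactly what the paper does.

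One small imprecision: you twice invoke Proposition~\ref{fom} for \emph{trace-norm} continuity of $T_l(E)$ (and hence of $\mathrm{Re}\,T_j^-+\mathrm{Re}\,T_j$ and of $\mathrm{Im}\,T_l$). Proposition~\ref{fom} only asserts continuity of $T_l(E)$ in the \emph{standard operator norm}; the $\mathfrak{S}_1$-statement there concerns the limit $\delta\downarrow 0$, not the dependence on $E$. For $\mathrm{Re}\,T$ this is harmless, since only operator-norm continuity is claimed in the lemma. For $\mathrm{Im}\,T$, however, trace-norm continuity is required, and the paper obtains it by writing out $\mathrm{Im}\,T(E+i0)$ explicitly (via \eqref{ld16}) as the finite-rank operator
\[
\mathrm{Im}\,T(E+i0)=-\pi\sum_{l\le j}G_l(\varrho_l(E-\mathcal{E}_l))^*G_l(\varrho_l(E-\mathcal{E}_l))\,\varrho_l'(E-\mathcal{E}_l),
\]
from which trace-norm continuity is immediate. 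Your route can be repaired in the same way: note that each $\mathrm{Im}\,T_l(E)$ is rank one by \eqref{ld16}, so operator-norm continuity automatically upgrades to trace-norm continuity. With that adjustment, your argument coincides with the paper's.
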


\begin{proof}
{For the proof of the existence of the limit} it suffices to use the representation
$$\ba{ll}
T(E + i\delta) &=\displaystyle{V^{1/2}{\mathcal F}^* \int_\R^\oplus\sum_{l\in \N}(E_l(k)-(E +i \delta))^{-1}\pi_l(k)\,dk\,{\mathcal F} V^{1/2}}\\[.5em]
&={\displaystyle{\sum_{l=1}^{j-1}T_l(E+i\delta)+T_j(E+i\delta)+T_j^+(E+i\delta)}},\ea
$$
and  apply Proposition \ref{fom} together with  Lemma \ref{posipart} {to obtain \eqref{28mar17}}. Next,  if $E  \in  (\mathcal{E}_j,{\mathcal E}_{j+1})$,  $T_j^+(E)$ is self-adjoint and   then from \eqref{ld16}, the imaginary part {of $T(E+ i0)$} is just  the finite rank operator
\bel{28apr17a}{\rm Im}\,T(E+ i0) = -\pi \sum_{l \leq j} G_l(\varrho_l(E-\mathcal{E}_l))^* G_l(\varrho_l(E-\mathcal{E}_l))\varrho_l'(E-\mathcal{E}_l),\ee
{which is {obviously} continuous in the trace norm. The continuity of   ${\rm Re}\,T(E+ i0)$ follows from Lemma \ref{posipart} and {Proposition \ref{fom}}. The case $E  \in  (\mathcal{E}_{j-1},{\mathcal E}_{j})$ is similar}. 

\end{proof}

{\begin{remark}
The decomposition used in the above results was inspired by \cite{bsoccrai}. Related tools appear also in \cite{popsoc} and \cite{hispopsoc}.
\end{remark}

{\begin{remark}
{Following the arguments of   \cite[Proposition 2.5]{bpr}, {and using} Lemma \ref{propini}, it should be possible to prove that  $\xi(\cdot ;H,H_0)$ is continuous on $\R \setminus (\sigma_p(H)\cup \{ {\mathcal E}_l, \, l \in \N\})$.}
\end{remark}

\subsection{{Proof of Theorem \ref{firsteh}}} \label{s4}
{Let us first recall some well know results for the counting function ({defined by \eqref{defn}})  that  will be repeatedly used henceforth.} For $r_1, r_2>0$,  we have  the  Weyl inequalities
    \bel{lau11}
    n_\pm(r_1 + r_2; A_1 + A_2) \leq n_\pm(r_1; A_1) + n_\pm(r_2; A_2),
    \ee
where  $A_j$, $j=1,2$, are
linear self-adjoint compact operators (see e.g. \cite[Theorem 9.2.9]{birsol}). 
For a compact  not necessarily self-adjoint operator $A$ set 
$$
n_*(s;A) : = n_+(s^2; A^* A), \quad s>0.
$$
Obviously $n_*(r;A)=n_*(r;A^*)$, 
 and if $A=A^*$
\bel{oct6_2}
n_*(r;A)=n_+(r;A)+n_-(r;A).
\ee
Further,  the Ky Fan inequality {states that} for $r_1, r_2>0$    
\bel{lau13}
    n_*(r_1 + r_2; A_1 + A_2) \leq n_*(r_1; A_1) + n_*(r_2;
A_2),
    \ee
for $A_j$, $j=1,2$,  not necessarily self-adjoint (see e.g. \cite[Subsection 11.1.3]{birsol}).

Finally,  for  $A$ in the Schatten -- von Neumann class  $\mathfrak{S}_p$,  we have  the  Chebyshev-type estimate
   \bel{dj36}
    n_*(r; A) \leq r^{-p} \|A\|_p^p,
    \ee
    for any $r > 0$ and $p \in [1, \infty)$.

{Now, to begin the proof of the Theorem, we note that from} \eqref{pssf} it is necessary to estimate
$n_\pm(1;{\rm Re}\, T(\mathcal{E}_j+\lambda+i0)+t\, \mbox{Im}
\,T(\mathcal{E}_j+\lambda+i0))$.  
Since for $\lambda$ small, $\mbox{Im}\,T(\mathcal{E}_j+\lambda+i0)$ is  of rank $j$ {(see \eqref{28apr17a})},  we have the
inequalities
\bel{prd21}\ba{lll}
 n_\pm(1;\mbox{Re} \,T(\mathcal{E}_j+\lambda +i0)) -j &\leq & n_\pm(1;\mbox{Re} \,T(\mathcal{E}_j+\lambda+i0)+t\, \mbox{Im} \,T(\mathcal{E}_j+\lambda+i0)) \qquad
\\[.5em]
 &\leq & n_\pm(1;\mbox{Re}\, T(\mathcal{E}_j+\lambda+i0 ))+j,
\ea\ee
for all $t \in \R$.

Next, for the real part,   Lemma
\ref{posipart} implies that {for $\lambda\to  0,$} the norm limit of the operator $T_j^+(\mathcal{E}_j+\lambda)$ is just {$T_j^+(\mathcal{E}_j)$}.  Therefore, {from \eqref{lau11},} for all $r>0$ 
$$
n_\pm(r;T_j^+(\mathcal{E}_j+\lambda)){\leq n_\pm(r/2;T_j^+(\mathcal{E}_j))}=O(1), \quad \lambda \to 0,
$$
and \eqref{prd31} together with  Weyl inequalities \eqref{lau11}  imply {that  for any $r \in (0,1)$}
$$
n_\pm(1+r;\mbox{Re}\,T_j^-(\mathcal{E}_j+\lambda)+\mbox{Re}\,T_j(\mathcal{E}_j+\lambda))+O(1) \leq
$$
$$
n_\pm(1;\mbox{Re}\, T(\mathcal{E}_j+\lambda +i0))\leq
$$
    \bel{prd22}
n_\pm(1-r;\mbox{Re}\,T_j^-(\mathcal{E}_j+\lambda)+\mbox{Re}\,T_j(\mathcal{E}_j+\lambda))+O(1),
\quad \lambda \to 0.    \ee
 To finish the proof we need to show that
$n_\pm(r;\mbox{Re}\,T_j^-(\mathcal{E}_j+\lambda){{)}}$ remains bounded for all
$r>0$,  
as $\lambda \to 0$. From  Proposition \ref{fom}, the  Lebesgue dominated convergence
theorem implies that 
 \bel{sof10}\ba{lll}
\lim_{\lambda \to 0}\mbox{Re}\,T_l(\mathcal{E}_j+\lambda)&=&\displaystyle{\lim_{\lambda \to 0}\mbox{p.v.}\int_0^\infty
 \frac{G_{l}(\varrho_l(s))^*G_{l}(\varrho_l(s))}{s-2b(j-l)-\lambda}\varrho_l'(s)\,ds}\\[1em]
 &=&\displaystyle{\mbox{p.v.}\int_0^\infty
\frac{G_{l}(\varrho_l(s))^*G_{l}(\varrho_l(s))}{s-2b(j-l)}\varrho_l'(s)\,ds}.
    \ea\ee
Then, {since}
${\rm{Re}}\,{T_j^-}(\mathcal{E}_j+\lambda)=\sum_{l=1}^{j-1}{\rm{Re}}\,T_l(\mathcal{E}_j+\lambda)$:  
{\bel{18may17}
n_\pm(r;\mbox{Re}\,T_j^-(\mathcal{E}_j+\lambda){{)}}=O(1), \quad \lambda \to 0.\ee}
Using \eqref{lau11} and putting together \eqref{pssf}, \eqref{prd21}, \eqref{prd22},  and \eqref{sof10}, \eqref{18may17}, we
obtain \eqref{pr41}.

\subsection*{Proof of Corollary \ref{cor1}}
Suppose that {the compact set}  $\mathcal{C}$ is contained in $[0,{\mathcal E}_{l_0}]{\setminus \{{\mathcal E}_{l}, \, 1\leq l \leq l_0 \}}$, for some $l_0\in\N$. If $E\in {\mathcal C}$, by Theorem  \ref{pushssf} and \eqref{prd21} 
$$|\xi(E;H_\pm,H_0)|\leq {\ n_\mp(1;{\rm Re}\,T(E+i0))+l_0.}$$
From \eqref{prd31}, \eqref{oct6_2}  and \eqref{dj36}
$$\ n_\mp(1;{\rm Re}\,T(E+i0)) 
{\leq 2 \sum_{l=1}^{l_0}\|{\rm Re}\, T_l(E)\|_1+ 4 \|T_{l_0}^+(E)\|^2_{2 }.}$$
Then, {Lemma \ref{posipart} and Proposition \ref{fom}}
give us the boundedness  of the SSF on $\mathcal{C}$.

\section{{Asymptotic behavior of the SSF}}\label{SecThm2}
The proof of   \eqref{the4} is based on 
{Theorem \ref{firsteh}} for both cases, ``+'' and ``-''. However, the ``+'' case present{s} more difficulties  due to the presence of the principal value term  in  its effective Hamiltonian (compare  {\eqref{ld7}}
with \eqref{ld16}). In order to maintain the notation simple and to avoid  repetitions  of arguments we will {set forth} the proof of \eqref{the4}  only in the ``+'' case i.e.,  for $V\geq 0$ and $\lambda >0$ we will study  $\xi({\mathcal E}_j+\lambda;H_+,H_0)$.  The proof of the ``-'' case can be performed using the same kind of ideas.

{We already know an explicit form of the operator ${\rm Re}\,T_j(\mathcal{E}_j+\lambda)$, {which is given by $\mbox{p.v.}\int_0^\infty\frac{G_{j}(\varrho_j(s))^*G_{j}(\varrho_j(s))}{(\lambda-s)}\varrho_j'(s)\,ds$.   Our next task,  is to estimate the contribution that each part of the  decomposition  \eqref{8sep16a}  makes} to the behavior of the counting function $n_-(r; {\rm Re}\, T_j(\mathcal{E}_j+\lambda))$.  }

{First,} for   $(x,y) \in \R^2$ define
$$\widetilde{V}(x,y) : =\left\{
              \begin{array}{ll}
                0 \, &\mbox{if}\quad x\leq 0, \\
                V(x,y) \,& \hbox{if} \quad x >0.
              \end{array}
            \right.$$
It is clear that the operator  in $L^2(\R^2)$,
\bel{dic11_2} \widetilde{V}^{1/2}{\mathcal
F}^*\, \int_{\R}^\oplus
(E_j(k)-z)^{-1}(\mathbb{0}_-\oplus\pi_{j}(k))\,dk\,\mathcal{F}\widetilde{V}^{1/2}\ee
has the same non zero eigenvalues that $T_j( z)= V^{1/2}{\mathcal
F}^*\, \int_{\R}^\oplus
(E_j(k)-z)^{-1}\pi_{j}(k)\,dk\,\mathcal{F}V^{1/2},$
which  acts  in $L^2(\R_+\times \R)$. Moreover, all the   results of Section \ref{SecThm1} for $T_j(z)$  are valid for the operator \eqref{dic11_2} with obvious modifications.  In what follows we will consider the operator defined by \eqref{dic11_2}, and its  corresponding limit as $z$ {tends to} $\mathcal{E}_j+\lambda$, but by an abuse of notation we will  denote them by $T_j(z)$ and $T_j(\mathcal{E}_j+\lambda)$ as well.

Now, for {$\lambda \in \R$} and $I \subset (0,\infty) \setminus \{\lambda\} $  let us introduce ${S}_{j}[\lambda, I]:L^2({\varrho_j(I)})\to L^2(\R^2)$ as the integral operator with kernel
\bel{nov5_2}
(2\pi)^{-1/2} \widetilde{V}(x,y) ^{1/2} e^{iky}\psi_{j}(x;k)|E_j(k)-{\mathcal E}_j-\lambda|^{-1/2}.
\ee
In   the same {manner},  define  $S_{j,\infty}[\lambda, I]:L^2({\varrho_j(I)})\to L^2(\R^2)$ as the integral operator with kernel \eqref{nov5_2}, but using $\psi_{j,\infty}$ instead of $\psi_{j}$. 

{Note that 
$${S_jS_j^*[\lambda, I]}:=S_j[\lambda, I]S_j[\lambda, I]^*=\int_{I}
\frac{G_{j}(\varrho_j(s))^*G_{j}(\varrho_j(s))}{{\lambda-s}}\varrho_j'(s)\,ds.
$$
Then, it is clear that {for any $\epsilon_\lambda \in  (0,\lambda) $}
\bel{decomp+}\begin{array}{l}
{\rm Re}\,T_j({\mathcal E}_j+\lambda)=\mbox{p.v.}\displaystyle{\int_{\lambda -\epsilon_\lambda }^{\lambda+ \epsilon_\lambda}
\frac{G_{j}(\varrho_j(s))^*G_{j}(\varrho_j(s))}{{\lambda-s}}\varrho_j'(s)ds}\\[1em]
+ S_{j} S_{j} ^* [\lambda, (\lambda+ \epsilon_\lambda ,  \infty) ] - S_{j} S_{j} ^* [\lambda, (0, \lambda-\epsilon_\lambda ) ],
\\
\end{array}\ee
{and in order to estimate $n_\pm(r; {\rm Re\,}T_j(\mathcal{E}_j+\lambda))$, we will have to control each term of \eqref{decomp+}.}

\subsection{First spectral estimates}\label{ssAbove}

\begin{lemma}\label{estiPV}
Fix  $r>0$.   Then, for  $\epsilon_\lambda \in (0,\lambda)$, as $\lambda$ tends to $0$
\bel{nov11_3}n_\pm\left(r;{\rm p.v.}\displaystyle{\int_{\lambda-\epsilon_\lambda}^{\lambda+\epsilon_\lambda}
\frac{G_{j}(\varrho_j(s))^*G_{j}(\varrho_j(s))}{{\lambda-s}}\varrho_j'(s)\,ds}\right)=
O\Big(\frac{\epsilon_\lambda}{\lambda^2 |\ln \lambda|^{\frac12}}\Big).\ee
\end{lemma}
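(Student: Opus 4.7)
\textbf{Proof plan for Lemma \ref{estiPV}.} The plan is to bound $n_\pm(r; \cdot)$ via the Chebyshev-type estimate \eqref{dj36} with $p=1$, so it suffices to show
\[
\Big\| {\rm p.v.}\!\int_{\lambda-\epsilon_\lambda}^{\lambda+\epsilon_\lambda}
\frac{G_j(\varrho_j(s))^*G_j(\varrho_j(s))}{\lambda-s}\,\varrho_j'(s)\,ds \Big\|_1
= O\!\Big(\frac{\epsilon_\lambda}{\lambda^2 |\ln\lambda|^{1/2}}\Big),
\]
and then divide by $r$. All the tools needed are already in place: the Lipschitz estimate \eqref{29jul16} on $G_j^*G_j$, the uniform bound $\|G_j(k)^*G_j(k)\|_1 = \|G_j(k)\|_2^2 \leq C$ (coming from $V\in L^1_y L^\infty_x$ and $\|\psi_j(\cdot;k)\|_{L^2(\R_+)}=1$, as used in \eqref{oct2}), and the asymptotics \eqref{ivri_asymp_2} for $\varrho_j'$ and $\varrho_j''$ near $0$.

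First, I would symmetrize the principal value by the change of variable $s=\lambda+t$. Writing $F(s):=G_j(\varrho_j(s))^*G_j(\varrho_j(s))\,\varrho_j'(s)$, one has
\[
{\rm p.v.}\!\int_{\lambda-\epsilon_\lambda}^{\lambda+\epsilon_\lambda}
\frac{F(s)}{\lambda-s}\,ds
= -\int_0^{\epsilon_\lambda}\frac{F(\lambda+t)-F(\lambda-t)}{t}\,dt,
\]
which is an ordinary Bochner integral in $\mathfrak{S}_1$. The task then reduces to bounding $\|F(\lambda+t)-F(\lambda-t)\|_1$ by a quantity which, after division by $t$, is integrable over $(0,\epsilon_\lambda)$ with total mass of the claimed size.

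Next I would split $F(\lambda+t)-F(\lambda-t)$ as
\[
\bigl[G_j(\varrho_j(\lambda+t))^*G_j(\varrho_j(\lambda+t))-G_j(\varrho_j(\lambda-t))^*G_j(\varrho_j(\lambda-t))\bigr]\varrho_j'(\lambda+t)
\]
\[
+ G_j(\varrho_j(\lambda-t))^*G_j(\varrho_j(\lambda-t))\bigl[\varrho_j'(\lambda+t)-\varrho_j'(\lambda-t)\bigr].
\]
For the first bracket, the Lipschitz estimate \eqref{29jul16} combined with the mean value theorem gives
\[
\bigl\|\,\cdots\bigr\|_1 \leq C\,|\varrho_j(\lambda+t)-\varrho_j(\lambda-t)| \leq C t \sup_{[\lambda-\epsilon_\lambda,\lambda+\epsilon_\lambda]}|\varrho_j'|,
\]
which by \eqref{ivri_asymp_2} is $O(t/(\lambda|\ln\lambda|^{1/2}))$; multiplying by $|\varrho_j'(\lambda+t)|=O(1/(\lambda|\ln\lambda|^{1/2}))$ yields a contribution of order $t/(\lambda^2|\ln\lambda|)$. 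For the second bracket, the uniform $\mathfrak{S}_1$-bound on $G_j^*G_j$ and $|\varrho_j'(\lambda+t)-\varrho_j'(\lambda-t)|\leq 2t\sup|\varrho_j''|=O(t/(\lambda^2|\ln\lambda|^{1/2}))$ give a contribution of order $t/(\lambda^2|\ln\lambda|^{1/2})$, which dominates. Hence $\|F(\lambda+t)-F(\lambda-t)\|_1 = O(t/(\lambda^2|\ln\lambda|^{1/2}))$, and dividing by $t$ and integrating from $0$ to $\epsilon_\lambda$ produces exactly the bound $O(\epsilon_\lambda/(\lambda^2|\ln\lambda|^{1/2}))$ required.

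The main technical point is the first step, namely verifying that the Lipschitz estimate \eqref{29jul16} (originally stated for $k,k'$ large) can be applied on the range $\varrho_j([\lambda-\epsilon_\lambda,\lambda+\epsilon_\lambda])$; this is fine because $\lambda\downarrow 0$ forces $\varrho_j(s)\to\infty$ uniformly on this interval by \eqref{ivri_asymp_2}, so for $\lambda$ small enough we are in the regime $k\geq K_j$ of Proposition \ref{holder}. Once this is clear, tracking the powers of $\lambda$ and $|\ln\lambda|$ through $\varrho_j'$ and $\varrho_j''$ is routine, and the Chebyshev inequality \eqref{dj36} closes the argument for both $n_+$ and $n_-$ simultaneously via $n_\pm(r;A)\leq n_*(r;A)\leq r^{-1}\|A\|_1$.
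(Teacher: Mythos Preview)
Your proof is correct and follows essentially the same approach as the paper: symmetrize the principal value, split $F(\lambda+t)-F(\lambda-t)$ into the same two pieces ($\mathcal{M}_1$ and $\mathcal{M}_2$ in the paper), bound the first via the Lipschitz estimate \eqref{29jul16} together with \eqref{ivri_asymp_2} for $\varrho_j'$, bound the second via the uniform $\mathfrak{S}_1$-control on $G_j^*G_j$ and \eqref{ivri_asymp_2} for $\varrho_j''$, and finish with the Chebyshev inequality \eqref{dj36}. Your explicit remark that $\varrho_j([\lambda-\epsilon_\lambda,\lambda+\epsilon_\lambda])$ lies in the large-$k$ regime where \eqref{29jul16} holds is a useful clarification that the paper leaves implicit.
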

\begin{proof}
To prove \eqref{nov11_3}  write
$$
\ba{ll}
&\mbox{p.v.}\displaystyle{\int_{\lambda-\epsilon_\lambda}^{\lambda+\epsilon_\lambda}\frac{G_{j}(\varrho_j(s))^*G_{j}(\varrho_j(s))}{{\lambda-s}}\varrho_j'(s)\,ds}\\[1em]
=&\displaystyle{\int_{0}^{\epsilon_\lambda}
\left({G_{j}(\varrho_j(\lambda+s))^*G_{j}(\varrho_j(\lambda+s))-G_{j}(\varrho_j(\lambda-s))^*G_{j}(\varrho_j(\lambda-s))}\right)\varrho'_j(\lambda+s) \frac{ds}{s}}\\[1em]
&+\displaystyle{\int_{0}^{\epsilon_\lambda}
{G_{j}(\varrho_j(\lambda-s))^*G_{j}(\varrho_j(\lambda-s))\left(\varrho_j'(\lambda+s)-\varrho_j'(\lambda-s)\right)}\frac{ds}{s}}\\[1em]
=&\mathcal{M}_1(\lambda)+\mathcal{M}_2(\lambda).
\ea
$$
Using \eqref{lau11}, \eqref{oct6_2} and  \eqref{dj36} we get
$$
n_\pm\left(r;\mbox{p.v.}\displaystyle{\int_{\lambda-\epsilon_\lambda}^{\lambda+\epsilon_\lambda}
\frac{G_{j}(\varrho_j(s))^*G_{j}(\varrho_j(s))}{{\lambda-s}}\varrho_j'(s)\,ds}\right)
\leq \frac{1}{2r} \|\mathcal{M}_1(\lambda)\|_{1}+\frac{1}{2r} \|\mathcal{M}_2(\lambda)\|_{1}.
$$
From  \eqref{29jul16} and \eqref{ivri_asymp_2}
$$
\ba{ll}
&\|\mathcal{M}_1(\lambda)\|_{1}\\[.5em]
\leq& \displaystyle{\int_{0}^{\epsilon_\lambda}
\left\|{G_{j}(\varrho_j(\lambda+s))^*G_{j}(\varrho_j(\lambda+s))-G_{j}(\varrho_j(\lambda-s))^*G_{j}(\varrho_j(\lambda-s))}\right\|_1|\varrho'_j(\lambda+s)| \frac{ds}{s}}\\[1em]
\leq &C \displaystyle{\int_{0}^{\epsilon_\lambda}\left|\frac{\varrho_j(\lambda+s)-\varrho_j(\lambda-s)}{s}\right|\left|\varrho'_j(\lambda+s)\right| ds}\\[1em]
\leq& C \displaystyle{\int_{0}^{\epsilon_\lambda}\frac{ds}{\lambda^2|\ln \lambda|}}=
O\left(\frac{\epsilon_\lambda}{\lambda^2 |\ln \lambda|}\right)
, \quad \lambda \downarrow 0.
\ea
$$
By a similar procedure we can see that
 $$\|\mathcal{M}_2(\lambda)\|_{1}\leq C \displaystyle{\int_{0}^{\epsilon_\lambda}\frac{ds}{\lambda^2|\ln \lambda|^{\frac12}}}=
O\Big(\frac{\epsilon_\lambda}{\lambda^2 |\ln \lambda|^{\frac12}}\Big)
, \quad \lambda \downarrow 0.$$
\end{proof}

\begin{lemma}\label{estiS1}
Set $r>0$  and  $\lambda_0>0$. Then, for any $j\in\N$ and {$\lambda \in [0, \lambda_0)$}  
 \bel{nov11_40} 
n_{*}\left(r;S_j[\lambda, (\lambda_0, \infty) ]\right) =O(1), \qquad 
n_{*}(r;S_{j, \infty}[\lambda, (\lambda_0, \infty) ] ) =O(1).
\ee
Moreover, for {$\lambda >0$, } $\epsilon_\lambda\in (0,\lambda)$ and $I=(0,\lambda-\epsilon_\lambda)\cup(\lambda+\epsilon_\lambda,\infty)$

{\bel{nov11_4} 
n_{*}\Big(r;S_j[\lambda, I ]-S_{j,\infty}[\lambda, I ] \Big)=  O \left( \frac{| \ln \epsilon_\lambda |}{| \ln \lambda |^{3/2}} \right), \quad \lambda \downarrow 0.
\ee}
\end{lemma}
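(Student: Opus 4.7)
The plan is to reduce both estimates to Hilbert--Schmidt computations, via the Chebyshev-type bound $n_*(r;A)\le r^{-2}\|A\|_2^2$ recalled in \eqref{dj36}, and then to evaluate the resulting scalar integrals using the asymptotics already collected in Section \ref{SecThm1}. Writing out the kernel \eqref{nov5_2} gives
$$\|S_j[\lambda,I]\|_2^2=\int_{\varrho_j(I)}\frac{1}{|E_j(k)-\mathcal{E}_j-\lambda|}\int_{\R_+}\psi_j(x;k)^2\Big(\int_\R V(x,y)\,dy\Big)\,dx\,\frac{dk}{2\pi},$$
and the $y$-integral of $V$ is bounded by $C\langle x\rangle^{-(m-1)}$ thanks to \eqref{cd2} with $m>2$. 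After the change of variables $s=E_j(k)-\mathcal{E}_j$ this becomes $C\int_I\|G_j(\varrho_j(s))\|_2^2\,|\varrho_j'(s)|\,|s-\lambda|^{-1}\,ds$ where the factor $\|G_j\|_2^2 |\varrho_j'|$ is already known to be integrable on $(0,\infty)$ from the proof of Proposition \ref{fom}.

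For Part 1, I would fix $\lambda\in[0,\lambda_0/2)$ (the case $\lambda$ close to $\lambda_0$ can be absorbed into the constant, since only the regime $\lambda\downarrow 0$ matters for the applications), so that $|s-\lambda|\ge s/2\ge \lambda_0/2$ on $(\lambda_0,\infty)$. Then
$$\|S_j[\lambda,(\lambda_0,\infty)]\|_2^2\le \frac{2}{\lambda_0}\int_0^\infty\|G_j(\varrho_j(s))\|_2^2\,|\varrho_j'(s)|\,ds=O(1),$$
and Chebyshev gives \eqref{nov11_40}. The same argument applies verbatim to $S_{j,\infty}$ using the integrability of $\|G_{j,\infty}^*G_{j,\infty}\|_2$ obtained in \eqref{l12}.

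For Part 2, the difference of kernels has the factor $\psi_j(x;k)-\psi_{j,\infty}(x;k)$, so after the same Fubini step and change of variables I arrive at
$$\|S_j-S_{j,\infty}\|_2^2\le C\int_I \frac{\|\psi_j(\cdot;k)-\psi_{j,\infty}(\cdot;k)\|_{L^2(\R_+)}^2}{|s-\lambda|}|\varrho_j'(s)|\,ds.$$
On the part of $I$ close to the threshold (say $I\cap(0,s_0)$ with $s_0$ small), Proposition \ref{mitlef} bounds the numerator by $C\,\varrho_j(s)^{-2}s$, and the asymptotics \eqref{ivri_asymp_2} for $\varrho_j,\varrho_j'$ turn the integrand into $|\ln s|^{-3/2}|s-\lambda|^{-1}$. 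I would split this integral into the three pieces $(0,\lambda-\epsilon_\lambda)$, $(\lambda+\epsilon_\lambda,2\lambda)$ and $(2\lambda,s_0)$: on the first two $|\ln s|\asymp|\ln\lambda|$ and $\int ds/|s-\lambda|=|\ln(\lambda/\epsilon_\lambda)|$, giving the leading term $|\ln(\lambda/\epsilon_\lambda)|/|\ln\lambda|^{3/2}$, while on the third the antiderivative of $1/(s|\ln s|^{3/2})$ is $2|\ln s|^{-1/2}$, producing only an $O(|\ln\lambda|^{-1/2})$ contribution.

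The main obstacle is the contribution from $s\ge s_0$, i.e.\ from bounded or very negative values of $k=\varrho_j(s)$, where Proposition \ref{mitlef} does not provide a decaying bound for $\|\psi_j-\psi_{j,\infty}\|_{L^2(\R_+)}$. There I plan to exploit the explicit Hermite form \eqref{sof9o} of $\psi_{j,\infty}$: for $k\to-\infty$ the Gaussian factor forces $\|\psi_{j,\infty}(\cdot;k)\|_{L^2(\R_+)}$ to be exponentially small, and combined with $|E_j-\mathcal{E}_j-\lambda|\sim k^2$ (cf.\ \eqref{asymband-}) and the weighted $y$-integral of $V$, the corresponding tail of the integral is convergent uniformly in $\lambda$ and tends to zero as the cut-off $s_0$ is pushed to infinity; for $k$ in a compact set the integrand is bounded and shown to yield a remainder which is dominated by the already computed principal term. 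Putting these pieces together and dividing by $r^2$ delivers \eqref{nov11_4}.
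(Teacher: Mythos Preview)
Your Part 1 argument via the Hilbert--Schmidt bound is fine (the paper obtains \eqref{nov11_40} instead by observing that $S_jS_j^*[\lambda,(\lambda_0,\infty)]$ admits a norm limit as $\lambda\to 0$, but your route works as well).

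The gap is in Part 2. Your entire argument for \eqref{nov11_4} goes through the Chebyshev inequality $n_*(r;A)\le r^{-2}\|A\|_2^2$, so the best you can prove this way is a bound by $r^{-2}\|(S_j-S_{j,\infty})[\lambda,I]\|_2^2$. But this Hilbert--Schmidt norm does \emph{not} tend to $0$ as $\lambda\downarrow 0$. Your own computation on $(2\lambda,s_0)$ illustrates the problem: the antiderivative $2|\ln s|^{-1/2}$ evaluated on that interval gives $2|\ln s_0|^{-1/2}-2|\ln(2\lambda)|^{-1/2}$, whose dominant term is the \emph{fixed constant} $2|\ln s_0|^{-1/2}$, not $O(|\ln\lambda|^{-1/2})$ as you claim. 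Likewise the tail $s\ge s_0$ (i.e.\ $k\le\varrho_j(s_0)$, a $\lambda$-independent set) contributes a positive constant to the HS norm. Since for $\epsilon_\lambda=\lambda/2$ the right-hand side of \eqref{nov11_4} is $\asymp|\ln\lambda|^{-1/2}\to 0$, an $O(1)$ bound cannot yield the lemma as stated; the assertion that the compact-$k$ remainder is ``dominated by the already computed principal term'' is precisely the step that fails.

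The paper circumvents this by \emph{not} using a Schatten-norm estimate on the intermediate range. It fixes $\eta>1$ and a small $\lambda_0$, and on $(\eta\lambda,\lambda_0)$ exploits the \emph{operator norm}: since $|E_j(k)-\mathcal{E}_j-\lambda|^{-1/2}\le\big(\eta/(\eta-1)\big)^{1/2}|E_j(k)-\mathcal{E}_j|^{-1/2}$ there, min-max reduces matters to the $\lambda$-independent operator $(S_j-S_{j,\infty})[0,(0,\lambda_0)]$, whose norm is controlled via Proposition \ref{mitlef} by $C\sup_{k>\varrho_j(\lambda_0)}k^{-1}$. Choosing $\lambda_0$ small makes this smaller than $r$, so the counting function on that piece is \emph{exactly zero}. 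The Chebyshev/trace bound is then applied only on the short intervals $(0,\lambda-\epsilon_\lambda)$ and $(\lambda+\epsilon_\lambda,\eta\lambda)$, where $|\ln s|\asymp|\ln\lambda|$ throughout and the integral genuinely produces $O(|\ln\epsilon_\lambda|/|\ln\lambda|^{3/2})$. This splitting into an operator-norm piece and short HS pieces is the missing idea in your argument.
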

\begin{proof}
For $k\in \varrho_j(\lambda_0, \infty) = (- \infty, \varrho_j(\lambda_0))$ we have $E_j(k)-{\mathcal E}_j \geq \lambda_0$,  then the compact operator $S_{j}S_{j}^*[\lambda, (\lambda_0, \infty) ] =\widetilde{V}^{1/2}{\mathcal
F}^*\, \int_{(-\infty,\varrho_j(\lambda_0))}^\oplus
(E_j(k)-\mathcal{E}_j-\lambda)^{-1}\pi_{j}(k)\,dk\,\mathcal{F}\widetilde{V}^{1/2}$ admits a norm limit  as $\lambda$ goes to $0$, yielding  the estimate in \,\eqref{nov11_40}. The same idea works for $S_{j, \infty}[\lambda, (\lambda_0, \infty) ] $.

In order to prove \eqref{nov11_4}, using the Ky Fan inequality  \eqref{lau13} and \eqref{nov11_40}, it is sufficient to show the following estimates for any $\eta>1$ and $\lambda_0$ small enough:
\bel{ss+1}
n_{*}\Big(r;(S_j- S_{j,\infty})[\lambda, (0, \lambda - \epsilon_\lambda) ]\Big)=  O \left( \frac{| \ln \epsilon_\lambda |}{| \ln \lambda |^{3/2}} \right) ,
\ee
\bel{ss+2}
n_{*}\Big(r;(S_j- S_{j,\infty})[\lambda, (\lambda+\epsilon_\lambda,\eta \lambda ) ]\Big)=  O \left( \frac{| \ln \epsilon_\lambda |}{| \ln \lambda |^{3/2}} \right) ,
\ee
\bel{ss+3}
n_{*}\Big(r;(S_j- S_{j,\infty})[\lambda, (\eta \lambda, \lambda_0 ) ]\Big)= 0 .
\ee
First,  in the  proof of \eqref{ss+3} we use that  for $k \in \varrho_j((\eta \lambda, \lambda_0 ))$, $E_j(k)-{\mathcal E}_j \geq \eta \lambda$ and then  
$$
|E_j(k)-{\mathcal E}_j-\lambda|^{-1/2} \leq \Big( \frac{\eta}{\eta-1} \Big)^{\frac12} |E_j(k)-{\mathcal E}_j|^{-1/2},$$
which together with the min-max principle yield
\bel{19jul16}
n_*\left(r;(S_j-S_{j,\infty})[\lambda,(\eta \lambda, \lambda_0 )]\right)\leq
n_*\left(r \left(\frac{\eta-1}{\eta}\right)^{1/2};(S_j-S_{j,\infty})[0,(0, \lambda_0 )]\right).
\ee
Also, by the definition of  $S_j$ and $S_{j,\infty}$
\bel{normS}
\| (S_j-S_{j,\infty})[0, (0,\lambda_0) ] \| \leq \| \widetilde{V}\|_{L^\infty(\R^2)}^{1/2} \, 
\sup_{k > \varrho_j(\lambda_0)} \Big( |E_j(k)-{\mathcal E}_j|^{-1/2} \,
\|  \psi_j(\cdot,k)-\psi_{j,\infty}(\cdot,k)\|_{L^2(\R_+)}  \Big).
\ee
Consequently,  by Proposition \ref{mitlef} {and $\lim_{s\downarrow 0}\varrho_j(s)=\infty$},  we can find $\lambda_0>0$ sufficiently small such that the operator norm $ \| (S_j-S_{j,\infty})[0, (0,\lambda_0) ] \| < r$, and then  $n_{*}\Big(r;(S_j-S_{j,\infty})[0, (0,\lambda_0) ] \Big)=0$.

For the  proof of \eqref{ss+1} and \eqref{ss+2},  we  use \eqref{dj36} to obtain
\bel{nov11_7}\ba{l}
n_{*}\left(r;(S_j-{S}_{j,\infty})[\lambda, I]\right)\leq \frac{1}{r}\| (S_j-{S}_{j,\infty})[\lambda, I]\|_1\\[.5em]
=\displaystyle{\frac{1}{r}\int_{\R_+\times\R}V(x,y)\int_{\varrho_j(I)}
|E_j(k)-{\mathcal E}_j-\lambda|^{-1}|\psi_j(x;k)-\psi_{j,\infty}(x;k)|^{2}\,dk\,dy\,dx}\\[1em]
\leq C \displaystyle{\int_{\R_+}\int_{\varrho_j(I)}
|E_j(k)-{\mathcal E}_j-\lambda|^{-1}|\psi_{j,\infty}(x;k)-\psi_{j,\infty}(x;k)|^{2}\,dk\,dx}.\\[1em]

\ea
\ee
Thus,  regarding Proposition \ref{mitlef} and \eqref{ivri_asymp_2},  for $I= (0, \lambda - \epsilon_\lambda)$ or $I =  (\lambda+\epsilon_\lambda,\eta \lambda )$, as $\lambda \downarrow 0$, we have
$$
\ba{ll}
n_{*}\left(r;(S_j-{S}_{j,\infty})[\lambda, I]\right)\leq
 C\displaystyle{\int_{\varrho_j(I)}
|E_j(k)-{\mathcal E}_j-\lambda|^{-1}(E_j(k)-{\mathcal E}_j)\,\frac{dk}{k^2}} \\[1em]=C\displaystyle{\int_{I}\frac{s\,|\varrho'_j(s)|}{|\lambda-s|\,\varrho_j(s)^2}\,ds}
 = O \left( \displaystyle{\int_{I}\frac{1}{|\lambda-s|\, |\ln s|^{3/2}}\,ds }\right) 
= O \left( \displaystyle{\frac{| \ln \epsilon_\lambda |}{| \ln \lambda |^{3/2}} }\right).
\ea
$$
\end{proof}

\subsection{Some pseudodifferential analysis}\label{Pseudo}

For $a\in \mathcal{S}_p^q$ {(introduced in \eqref{12mar17a})}, we define the operator  $Op^W(a)$ using  the Weyl quantization:
$$(Op^{W}(a)u)(x):=\frac{1}{2\pi}\int_{\R^2}a\left(\frac{x+y}{2},\xi\right)e^{-i(x-y)\xi}u(y)\,dy\,d\xi,$$
for $u$ in the Schwartz space ${\mathcal S}(\R)$.

{In this section we will prove some lemmas that will help us  to show that, roughly, the operator {$ S_{j,\infty} S^*_{j,\infty} [\lambda, (0, \lambda-\epsilon_\lambda ) ]$ is a good approximation of ${\rm Re\,}(T_j)$, and then that} $\lambda^{-1}Op^W(V)$ is an effective Hamiltonian of our problem}.

For general properties  of pseudodifferential operators like: composition, selfadjointness, norm estimates, compact properties, etc. we refer to \cite{hor3}. Here we will consider more specifically selfadjoint pseudodifferential operators of negative order (i.e., $a \in S_p^q$ with $q>0$ and $p \geq 0$) which are known to be compact.  
{

Set $\widetilde{N}(\lambda,a)$ as the volume function in $\R^2$ 
\bel{apr20b}
\widetilde{N}(\lambda,a):=\frac{1}{2\pi}vol\{(x,\xi)\in \R^2; a(x,\xi)> \lambda\}.
\ee

\begin{lemma}\label{estipsdo1} 
Let $a \in \mathcal{S}_{p}^{q}$ be a real valued symbol, with $q>0$ {and $p\geq0$}. Then,  the operator $Op^{W}(a)$ is essentially self-adjoint,  compact and  its eigenvalue  counting functions satisfy
$$n_+(\lambda, Op^{W}(a)) + n_-(\lambda, Op^{W}(a))  \leq C 
\left \{\begin{array}{lcc} 
\lambda^{- \frac{1}{q}} & \hbox{ if } & p>q\\
\lambda^{- \frac{2}{q}} & \hbox{ if } & p=0,\\
\end{array} \right. 
$$
where $C$ depends only on a finite number of the semi-norms $$n^{p,q}_{\alpha,\beta}(a){:= \sup_{(x,\xi) \in \R^2}|\langle x\rangle^{p}\langle x,\xi\rangle ^{q+\alpha}\partial_\xi^\alpha \partial_x^\beta a(x,\xi)|,\quad \alpha,\beta \in\Z_+.}$$
\end{lemma}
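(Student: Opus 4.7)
The plan for Lemma \ref{estipsdo1} is to proceed in three steps: first establish essential self-adjointness and compactness of $Op^W(a)$; then derive a Weyl-type upper bound for the counting function in terms of the volume $\widetilde{N}(\lambda,|a|)$; and finally compute this volume explicitly from the symbol estimates.

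Since $a$ is real-valued, $Op^W(a)$ is symmetric on $\mathcal{S}(\R)$. As $a\in \mathcal{S}_p^q \subset \mathcal{S}_0^0$, the Calder\'on--Vaillancourt theorem yields boundedness on $L^2(\R)$ with norm controlled by a finite number of semi-norms $n^{p,q}_{\alpha,\beta}(a)$. Compactness follows by approximating $a$ in the $\mathcal{S}_0^0$-topology by compactly supported symbols, whose Weyl quantizations are Hilbert--Schmidt (their kernels are in $L^2(\R^2)$); essential self-adjointness is then immediate from boundedness and symmetry.

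For the counting function, I would use the splitting strategy from \cite{dauro,ivrii2}. Fix a small $c>0$ and decompose $a = a_{\rm sm} + a_{\rm res}$, with $a_{\rm sm}$ supported where $|a|\leq c\lambda$ and $a_{\rm res}$ supported in $\{|a|>c\lambda/2\}$, smoothed so that both components remain in $\mathcal{S}_p^q$ with semi-norms uniformly controlled by those of $a$. Calder\'on--Vaillancourt gives $\|Op^W(a_{\rm sm})\| \leq \lambda/2$ for $c$ small, so by min-max
$$n_\pm(\lambda, Op^W(a)) \leq n_\pm(\lambda/2, Op^W(a_{\rm res})).$$
A classical symplectic covering argument then bounds the essential rank of $Op^W(a_{\rm res})$ by a constant times $\widetilde{N}(c\lambda/2,|a|)$: one covers $\mathrm{supp}\,a_{\rm res}$ by unit boxes adapted to the flat phase-space metric, localizes the operator to each box through a microlocal partition of unity, and uses that each localized piece is uniformly bounded and hence contributes rank $O(1)$.

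The explicit volume estimates follow directly from the symbol bound $|a(x,\xi)| \leq C\,\langle x\rangle^{-p}\langle x,\xi\rangle^{-q}$. When $p=0$, $\{|a|>\lambda\} \subset \{\langle x,\xi\rangle < (C/\lambda)^{1/q}\}$ is a disk of area $O(\lambda^{-2/q})$. When $p>q$, for each fixed $x$ the set $\{\xi : |a(x,\xi)|>\lambda\}$ has length at most $C\,\langle x\rangle^{-p/q}\lambda^{-1/q}$ when nonempty, and integrating over $x$ exploits the fact that $\int_{\R} \langle x\rangle^{-p/q}\,dx < \infty$ (since $p/q > 1$), giving $\widetilde{N}(\lambda,|a|) = O(\lambda^{-1/q})$. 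The main obstacle is the phase-space covering step bounding the essential rank of $Op^W(a_{\rm res})$: this requires a uniform partition of unity argument with constants depending only on finitely many semi-norms of $a$, but it is standard in the Ivrii--Robert pseudodifferential framework and accounts for the final semi-norm dependence in the constant $C$.
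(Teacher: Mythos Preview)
Your volume computation in the third step is essentially identical to the paper's: both bound $\widetilde{N}(\lambda,\langle x\rangle^{-p}\langle x,\xi\rangle^{-q})$ by integrating the $\xi$-length $O(\lambda^{-1/q}\langle x\rangle^{-p/q})$ over $x$, distinguishing whether $p/q>1$ or $p=0$.

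The difference lies in the reduction of the counting function to this volume. The paper does not carry out your splitting/covering argument at all; it simply invokes \cite[Lemma~4.7]{dauro}, which directly gives
\[
n_+(\lambda,Op^W(a))+n_-(\lambda,Op^W(a))\leq C\,\widetilde{N}(\lambda,\langle x\rangle^{-p}\langle x,\xi\rangle^{-q}),
\]
with $C$ depending on finitely many semi-norms. Your proposal effectively sketches a proof of that Dauge--Robert lemma. This is fine in spirit, but be aware of one technical point: if you cut off according to the level set $\{|a|\lesssim c\lambda\}$, the smoothing introduces derivatives of size $O(\lambda^{-1})$, so the Calder\'on--Vaillancourt bound on $Op^W(a_{\rm sm})$ will in general only be $O(1)$, not $O(\lambda)$. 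The standard fix is to cut off according to the \emph{reference weight} $\langle x\rangle^{-p}\langle x,\xi\rangle^{-q}$ (whose level sets have controlled geometry), or to use an anti-Wick/positive quantization; either route restores the uniform semi-norm control you claim. With that adjustment your argument goes through and recovers exactly what the paper cites as a black box.
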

\begin{proof}
Applying \cite[Lemma 4.7]{dauro} we have 
$$n_+(\lambda, Op^{W}(a)) + n_-(\lambda, Op^{W}(a))  \leq C \, 
\widetilde{N}(\lambda,\langle x\rangle ^{-p}\langle x,\xi\rangle ^{-q}), $$
where $C$ depends only on a finite number of semi-norms $n^{p,q}_{\alpha,\beta}(a)$.
Using that $\Omega^q_p(\lambda):= \{ (x,\xi) \in \R^2; \; \langle x\rangle ^{-p}\langle x,\xi\rangle ^{-q} \geq \lambda \}$ satisfies 
$$\Omega^q_p(\lambda) = \{ (x,\xi) \in \R^2; \; \langle x\rangle ^{2} + \xi^2 \leq \lambda^{- \frac{2}{q}} \, \langle x\rangle ^{- \frac{2p}{q}} \}
\subset \{ (x,\xi) ; \; \langle x\rangle ^{1 + \frac{p}{q}} \leq \lambda^{- \frac{1}{q}}; \;   |\xi | \leq \lambda^{- \frac{1}{q}}  \, \langle x\rangle ^{- \frac{p}{q}} \},$$
we obtain
$$\widetilde{N}(\lambda,\langle x\rangle ^{-p}\langle x,\xi\rangle ^{-q}) \leq 2 \lambda^{- \frac{1}{q}} \, \int_{ |x | \leq \lambda^{- 1/(p+q)} }  \langle x\rangle ^{- \frac{p}{q}}\,dx,$$
and the claimed estimate follows.
\end{proof}

 Let $W\geq0$ {be} a function in $L^1(\R^2$). Set $Q_j(W):L^2(\R)\to L^2(\R^2)$ to be  the integral operator with kernel
\bel{nov5_2_0}
(2\pi)^{-1/2}{W}(x,y)^{1/2} e^{iky}\psi_{j,\infty}(x;k),
\ee
and define  the operator  $ \mathcal{W}_j:L^2(\R)\to L^2(\R) $ by
\bel{S0Vj}
\mathcal{W}_j={Q_j}^*(W) \, Q_j(W).
\ee
{Note that  for $\lambda \notin I$, the operators $ {S}_{j,\infty}[\lambda, I]$ and  $Q_j(\widetilde{V} )$ are connected through  the expression}
\bel{SS0}
{S}_{j,\infty}[\lambda, I] =  Q_j(\widetilde{V}) \,  \, {\bf 1}_{\varrho_j(I)}(\cdot) \,  \, |E_j(\cdot)-{\mathcal E}_j-\lambda|^{-1/2}.
\ee
{Furthermore,  we have the following proposition}
\begin{proposition}[\cite{shi2}, Lemma 5.1]\label{shirai} Suppose that $W$ is in $\mathcal{S}_0^m$. Then,  for any $j\in\N$ we have $${\mathcal W}_j=Op^W(w_j),$$ 
where the symbol $w_j$ is in ${\mathcal S}_0^m$. Further
\bel{sep25}
w_j(x,\xi)={W}(x,-\xi)+R_1(x,\xi),
\ee
where  $R_1\in {\mathcal S}^{m+1}_0$  \end{proposition}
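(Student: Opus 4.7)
The plan is to write down the integral kernel of $\mathcal{W}_j$ explicitly, invert the Weyl quantization to recover the symbol, and then isolate the leading term by a Taylor expansion in the offset variable.

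First I would unfold the definitions of $Q_j$ and $Q_j^*$ and perform the $y$-integration (which produces a partial Fourier transform of $W$ in the second variable, hereafter denoted $\hat W$), so that $\mathcal{W}_j$ acts on $L^2(\R)$ with Schwartz kernel
$$K(k,k') \;=\; \tfrac{1}{\sqrt{2\pi}}\int_{\R}\hat W(x, k-k')\,\psi_{j,\infty}(x;k)\,\psi_{j,\infty}(x;k')\,dx.$$
Passing to midpoint/difference coordinates $(\mu,\nu)=((k+k')/2,\,k-k')$, inserting the Hermite form \eqref{sof9o} of $\psi_{j,\infty}$, and making a linear change of variable in the $x$-integration yields a closed-form integral expression for the Weyl symbol $w_j(\mu,\xi)=\int K(\mu+\nu/2,\mu-\nu/2)\,e^{i\nu\xi}\,d\nu$ involving only $\hat W$, the Hermite function $\varphi_j$, and explicit $b$-dependent factors.

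Next I would Taylor-expand the product $\varphi_j(\cdot - b^{-1/2}\nu/2)\,\varphi_j(\cdot + b^{-1/2}\nu/2)$ at $\nu=0$. The zeroth-order contribution is $\varphi_j(\cdot)^2$; integrating it against $e^{i\nu\xi}$ inverts the partial Fourier transform of $W$ and, using $\|\varphi_j\|_{L^2}=1$, produces the claimed leading term $W(x,-\xi)$ of \eqref{sep25} (the sign being dictated by the chosen Weyl convention). The higher-order contributions are even powers $\nu^{2\ell}$ with $\ell\geq 1$: each factor of $\nu$ can be converted by integration by parts in $\nu$ into a derivative $\partial_\xi$ acting on $W$, and under the hypothesis $W\in\mathcal{S}_0^m$ this gains one order of decay in $\langle x,\xi\rangle$ per pair. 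Writing the first Taylor step with integral remainder therefore identifies $R_1\in\mathcal{S}_0^{m+1}$. The verifications $w_j\in\mathcal{S}_0^m$ and $R_1\in\mathcal{S}_0^{m+1}$ would then be carried out by differentiating the corresponding closed-form integrals any number of times in $(x,\xi)$, passing the derivatives under the integral sign, and estimating using the seminorms of $W$ together with the Gaussian decay of $\varphi_j$ and its derivatives.

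The main technical obstacle will be showing that $R_1$ really gains a full order of decay in the \emph{anisotropic} weight $\langle x,\xi\rangle$ defining $\mathcal{S}_0^{m+1}$, and not just in $\langle\xi\rangle$. To achieve this one has to trade the extra $\nu$-factors simultaneously for derivatives of $W$ (yielding $\xi$-decay through $W\in\mathcal{S}_0^m$) and, via the three-term recurrence for Hermite polynomials, for additional $x$-factors coming from the Gaussian tail of $\varphi_j$. Apart from this bookkeeping, the remainder of the argument is a direct, if tedious, verification of the Hörmander-type seminorm estimates characterizing the classes $\mathcal{S}_p^q$.
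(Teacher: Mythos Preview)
The paper gives no proof of this proposition; it is quoted verbatim from \cite{shi2}, Lemma~5.1. So there is nothing to compare against, and your outline is essentially the standard route: compute the Schwartz kernel of $\mathcal W_j$, invert the Weyl quantization, and expand. That strategy is correct and is exactly what is done in the cited reference.

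There is, however, a genuine gap in your execution. After your change of variable, the zeroth-order term of the Taylor expansion in $\nu$ is \emph{not} $W(x,-\xi)$; it is
\[
\int_{\R} W\bigl(b^{-1/2}u+b^{-1}\mu,\,\pm\xi\bigr)\,\varphi_j(u)^2\,du,
\]
a Gaussian smoothing of $W$ in its first variable. The condition $\|\varphi_j\|_{L^2}=1$ only says the smoothing kernel has mass one; it does not collapse the convolution to a point evaluation. To pass from this smoothed expression to $W(b^{-1}\mu,\pm\xi)$ you need a further expansion in the offset $u$, and the remainder of \emph{that} expansion involves $\partial_x W$ and $\partial_x^2 W$. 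In the class $\mathcal S_0^m$ defined in \eqref{12mar17a}, derivatives in the first variable carry \emph{no} gain in the weight $\langle x,\xi\rangle$ (only $\partial_\xi$ does), so this contribution is a priori only in $\mathcal S_0^m$, not $\mathcal S_0^{m+1}$. Concretely, for $W(x,y)=f(x)\langle y\rangle^{-m}$ with $f$ Schwartz one checks that the smoothing remainder equals $\langle\xi\rangle^{-m}\,[\,(f\ast\varphi_j^2)(\cdot)-f\,](b^{-1}\mu)$, which at $\mu=0$ behaves like a nonzero constant times $\langle\xi\rangle^{-m}$ and therefore fails the $\mathcal S_0^{m+1}$ bound.

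Your ``main technical obstacle'' paragraph is thus aimed at the wrong target: the higher-order $\nu^{2\ell}$ terms are the \emph{easy} part---each $\nu$ converts to a $\partial_\xi$ on $W$ and gains a full order---while the delicate contribution is the first-variable smoothing at order $\nu^0$, which your integration-by-parts/Hermite-recurrence idea does not touch. To close the argument you would have to go back to Shirai's original lemma and check precisely which symbol class is used there (the gain may in fact be stated in a different or two-sided class), or else replace the single estimate $R_1\in\mathcal S_0^{m+1}$ by a decomposition of $R_1$ into a piece in $\mathcal S_0^{m+2}$ (from the $\nu$-expansion) and a piece in $\mathcal S_p^{m}$ for all $p\ge 0$ (from the first-variable smoothing, where the Schwartz tails of $\varphi_j$ supply arbitrary $\langle x\rangle$-decay). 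Either version is enough for the application via Lemma~\ref{estipsdo1}, but neither is what you have written.
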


\begin{lemma}\label{estiS2_a}
Let  $W\geq0$   be a function in $\mathcal{S}_0^m$  and 
$\rm{supp}\, W\subset [K_0,\infty)\times\R$, for some $K_0\in \R$.  Then
$$ 
n_{+}\Big(\lambda ; {\bf 1}_{(-\infty, K_0-1)}(\cdot) \, \mathcal{W}_j \, {\bf 1}_{(-\infty,K_0-1)}(\cdot) \Big) = O(\lambda^{-\frac{1}{m}}),\quad \lambda\downarrow 0.
$$
\end{lemma}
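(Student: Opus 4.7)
The plan is to reduce the operator to a pseudodifferential operator whose counting function can be controlled via Lemma~\ref{estipsdo1}, after absorbing an exponentially small tail that comes from the Gaussian decay of $\psi_{j,\infty}$.

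First, I would introduce a smooth cut-off $\chi\in C^\infty(\R)$ with $\chi=1$ on $(-\infty,K_0-1]$, $\chi=0$ on $[K_0,\infty)$, and $0\le\chi\le 1$, so that $\mathbf{1}_{(-\infty,K_0-1)}=\mathbf{1}_{(-\infty,K_0-1)}\chi$ and the min-max principle gives $n_+(\lambda;\mathbf{1}\mathcal{W}_j\mathbf{1})\le n_+(\lambda;\chi\mathcal{W}_j\chi)$. Applying Proposition~\ref{shirai} together with the Weyl composition formula yields $\chi\mathcal{W}_j\chi=Op^W(\chi^2 w_j+\widetilde R)$, where $\widetilde R$ collects the lower-order terms, each involving at least one derivative of $\chi$ and therefore compactly supported in $k\in[K_0-1,K_0]$. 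The key observation is that $\mathrm{supp}\,\chi\subset(-\infty,K_0)$ is disjoint from the $k$-support of $W(k,-\xi)$, so $\chi(k)^2 W(k,-\xi)\equiv 0$ and $\chi^2 w_j=\chi^2 R_1$ with $R_1\in\mathcal{S}_0^{m+1}$ by Proposition~\ref{shirai}.

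The main obstacle is that $\chi^2 R_1$ is not compactly supported in $k$, so applying Lemma~\ref{estipsdo1} directly with $(p,q)=(0,m+1)$ only yields $\lambda^{-2/(m+1)}$, which is strictly weaker than the target $\lambda^{-1/m}$ when $m>1$. To circumvent this I would split $\chi=\chi_{\mathrm{nr}}+\chi_{\mathrm{far}}$, with $\chi_{\mathrm{nr}}$ smooth and supported in the compact window $[-M-1,K_0]$ and $\chi_{\mathrm{far}}$ smooth and supported in $(-\infty,-M]$, where $M=M(\lambda)\asymp|\ln\lambda|^{1/2}$ is to be fixed. The compact support of $\chi_{\mathrm{nr}}$ promotes $\chi_{\mathrm{nr}}^2 R_1$ to $\mathcal{S}_p^{m+1}$ for every $p>m+1$, with seminorms bounded by $C(1+M)^p$; Lemma~\ref{estipsdo1} in the $p>q$ regime then provides $n_+(\lambda;Op^W(\chi_{\mathrm{nr}}^2 R_1))\le C(1+M)^N\lambda^{-1/(m+1)}$. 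The piece coming from $\widetilde R$ is handled in exactly the same way, and in fact sits in a better symbol class since it involves further derivatives of $w_j$.

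For the far part $\chi_{\mathrm{far}}\mathcal{W}_j\chi_{\mathrm{far}}$ and the cross terms $\chi_{\mathrm{nr}}\mathcal{W}_j\chi_{\mathrm{far}}$, I would estimate the Hilbert-Schmidt norm directly from the kernel representation of $\mathcal{W}_j$: for $k\le -M$ and $x\ge K_0$ (the $x$-support of $W$), the argument $b^{1/2}x-b^{-1/2}k$ of $\varphi_j$ in $\psi_{j,\infty}(x;k)=b^{1/4}\varphi_j(b^{1/2}x-b^{-1/2}k)$ exceeds $cM$, so standard Hermite-Gaussian bounds together with the computation used to derive \eqref{nov11_7} yield $\|\chi_{\mathrm{far}}\mathcal{W}_j\chi_{\mathrm{far}}\|+\|\chi_{\mathrm{nr}}\mathcal{W}_j\chi_{\mathrm{far}}\|\le C e^{-cM^2}$. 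Calibrating the implicit constant in $M(\lambda)\asymp|\ln\lambda|^{1/2}$ so that this quantity is smaller than $\lambda$, these pieces contribute nothing to $n_+(\lambda;\cdot)$. Assembling everything through the Weyl and Ky Fan inequalities \eqref{lau11}--\eqref{lau13}, the total count is bounded by $C(1+|\ln\lambda|)^{N/2}\lambda^{-1/(m+1)}$, and since $1/(m+1)<1/m$ this is $o(\lambda^{-1/m})$ as $\lambda\downarrow 0$, which yields the claimed bound.
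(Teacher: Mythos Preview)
Your approach is genuinely different from the paper's. The paper does not use the decomposition $w_j = W(\cdot,-\cdot)+R_1$ from Proposition~\ref{shirai} at all; instead it returns to the explicit oscillatory-integral representation of $w_j$ and proves directly that, for every $x<K_0-\tfrac12$ and every $p\ge 0$, one has $|\langle x\rangle^{p}\langle x,\xi\rangle^{m+\alpha}\partial_x^\beta\partial_\xi^\alpha w_j(x,\xi)|\le C(m,p,\alpha,\beta)$. The exponential gain in $x$ comes from the Gaussian factor $e^{-(b^{1/2}x'-b^{-1/2}x)^2}$ in $\psi_{j,\infty}$, integrated against the constraint $x'\ge K_0$ forced by the support of $W$. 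Once this pointwise symbol estimate is known, standard Weyl composition places $\chi\,\mathcal{W}_j\,\chi$ in $Op^W(\mathcal{S}_p^m)$ for any $p>m$, and a single application of Lemma~\ref{estipsdo1} gives $O(\lambda^{-1/m})$ with a fixed constant --- no $\lambda$-dependent cutoffs and no tracking of seminorm growth. Your route trades this symbol computation for a near/far splitting and uses the Gaussian decay only through a Hilbert--Schmidt bound on the far piece; the price is the bookkeeping with $M(\lambda)$ and the implicit assumption that the constant in Lemma~\ref{estipsdo1} depends polynomially on the seminorms, but you end up with the (formally sharper) bound $|\ln\lambda|^{N/2}\lambda^{-1/(m+1)}=o(\lambda^{-1/m})$.

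One point does need correction. The Moyal remainder $\widetilde R$ in $\chi\#w_j\#\chi=\chi^2 w_j+\widetilde R$ is \emph{not} compactly supported in $[K_0-1,K_0]$. Only the individual terms of the asymptotic expansion --- those carrying an explicit factor $\chi^{(\ell)}$, $\ell\ge 1$ --- have that support; the genuine remainder after $N$ such terms is an oscillatory integral involving $\chi^{(N)}(x+ts/2)$ for $t\in[0,1]$, $s\in\R$, and is merely in $\mathcal{S}_0^{m+N}$. This does not destroy your argument: choose $N$ large enough that $2/(m+N)\le 1/m$ and feed this tail directly into Lemma~\ref{estipsdo1} with $p=0$, while the finitely many compactly supported expansion terms are treated exactly as you treat $\chi_{\rm nr}^2 R_1$. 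With that repair, and with the polynomial-seminorm dependence in Lemma~\ref{estipsdo1} made explicit, your argument goes through.
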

\begin{proof}
Consider a smooth function $\chi$ such that $ 0\leq \chi \leq 1$, $\chi(k)=1$ for  $k \in (-\infty,K_0-1]$ and $\chi(k)=0$ for {$k \in [K_0-\frac12,\infty)$}. 
We clearly have
\bel{tur0}
n_{+}\Big(\lambda ; {\bf 1}_{(-\infty, K_0-1)}(\cdot) \, \mathcal{W}_j \, {\bf 1}_{(-\infty, K_0-1)}(\cdot) \Big)
\leq
n_{+}\Big(\lambda ; \chi(\cdot)\,  \mathcal{W}_j \, \chi(\cdot) \Big).
\ee
{To finish the proof of the Lemma,  it}  is sufficient to show that $\chi \, {\mathcal W}_j \, \chi$ is a pseudo-differential operator in $Op^W(\mathcal{S}^m_p)$ for any $p \geq 0$, and then  apply Lemma \ref{estipsdo1}.  Moreover, since any derivative of  $\chi$ is still supported in $(- \infty, K_0)$, using composition theorems, we are reduced to prove that for any $p\geq 0$ and any $(\alpha, \beta) \in \N^2$ there exists a positive constant $C(m,p, \alpha, \beta)$ such that 
\bel{june24}
{\forall (x,\xi) \in (- \infty, K_0-\frac12)\times \R }, \qquad 
|\langle x \rangle^{p}\langle x,\xi \rangle^{m+\alpha}\partial_x^\beta\partial_\xi^\alpha w_j(x,\xi)| 
\leq C(m,p, \alpha, \beta), 
\ee
where $w_j(x,\xi)$ is the Weyl's symbol of ${\mathcal W}_j $ given in Proposition \ref{shirai}. 

Suppose first that $W$ is in the Schwartz space $S(\R^2)$, then $w_j(x,\xi)$ is given by
$$
w_j(x,\xi)=\frac{1}{2\pi}\int_{\R^3}e^{-izy'}W(x',z-\xi)\psi_{j,\infty}(x';x-\frac{y'}{2})\psi_{j,\infty}(x';x+\frac{y'}{2})\,dx'\,dy'\,dz,
$$
{and satisfies}
$$\ba{ll}  |\langle x,\xi \rangle^{m+\alpha}\partial_x^\beta\partial_\xi^\alpha w_j(x,\xi)|
\leq
C \langle x,\xi \rangle^{m+\alpha}\\[,5em]
\times  \displaystyle{\int_{\R^3} \left|\partial^\alpha_\xi W(x',z-\xi)\right|}
\langle z\rangle^{-2N} 
\displaystyle{\left| \partial_x^\beta\langle D_{y'}\rangle^{2N} \left( \psi_{j,\infty}(x';x-\frac{y'}{2})\psi_{j,\infty}(x';x+\frac{y'}{2})\right)\right|
\,dx'\,dy'\,dz}\\[1em]
\leq  C\displaystyle{\int_{\R^3}\langle x',z-\xi \rangle^{m+\alpha} \left| \partial^\alpha_\xi W(x',z-\xi)\right| \langle z\rangle^{-2N+m+\alpha}}\\[1em]
\displaystyle{\times\left|\langle x-bx' \rangle^{m+\alpha}
 \partial_x^\beta\langle D_{y'}\rangle^{2N} \left( \psi_{j,\infty}(x';x-\frac{y'}{2})\psi_{j,\infty}(x';x+\frac{y'}{2})\right)\right|
\,dx'\,dy'\,dz,}
\ea
$$
where in the last inequality we have used that $\langle x,\xi \rangle \leq C \langle z \rangle \langle x-bx' \rangle \langle x',z-\xi\rangle$, for some positive constant $C$.
Choosing $N$ sufficiently large and using  that $W$ is supported on $[K_0,+\infty)\times \R$, we deduce
$$\ba{ll}|\langle x,\xi \rangle^{m+\alpha}\partial_x^\beta\partial_\xi^\alpha w_j(x,\xi)|\leq{C\, n_{\alpha,0}^{0,m}(W)}\\ 
\times\displaystyle{\int_{[K_0,+\infty)\times \R} \langle x-bx' \rangle^{m+\alpha}}{\left|\partial_x^\beta\langle D_{y'}\rangle^{2N} \left( \psi_{j,\infty}(x',x-\frac{y'}{2})\psi_{j,\infty}(x',x+\frac{y'}{2})\right)\right|
\,dx'\,dy'.}
\ea
$$
 Furthermore, straightforward calculations show that:
$$\partial_x^{\beta} \partial_{y'}^{N} \psi_{j,\infty}(x',x-\frac{y'}{2})\psi_{j,\infty}(x',x+\frac{y'}{2})=P(b^{1/2}x'-b^{-1/2}x,y') e^{-(b^{1/2}x'-b^{-1/2}x)^2-\frac{1}{4b}y'^2},$$
where $P$ is a polynomial. Thus, we conclude  that there exists $M>0$ and $C_{m,p,\alpha,\beta}$ (depending on $n_{\alpha,0}^{0,m}(W)$) such that
\bel{oct13}
|\langle x,\xi \rangle^{m+\alpha}\partial_x^\beta\partial_\xi^\alpha w_j(x,\xi)|
\leq C_{m,p,\alpha,\beta}\, \displaystyle{\int_{[K_0,+\infty)} \langle x-bx' \rangle^{M}}    e^{-(bx'-x)^2b^{-1}} dx'.\\
\ee
The r.h.s. of \eqref{oct13} is clearly exponentially decreasing with respect to $x<K_0-1$, implying  \eqref{june24} for  $W$ in Schwartz space.  Using a   limiting argument we {can} deduce \eqref{june24}
for $W \in S^{m}_0$.
\end{proof}

{\begin{lemma}\label{estiS2} Let  $W\geq0$  be  in $\mathcal{S}_0^m$ and 
$\rm{supp}\, W\subset [K_0,\infty)\times\R$, for some $K_0\in \R$. {Further},  let  $\epsilon_\lambda \in (\lambda^\theta, \delta \lambda)$,  $\theta>1$,  and $ \delta  \in (0,1)$.
Then, for any $A>0$ and $\nu>0$
\bel{may31_1} 
n_{+}\Big(\epsilon_\lambda ; {\bf 1}_{(-\infty, \varrho_j( A \lambda))}(\cdot) \, \mathcal{W}_j \, {\bf 1}_{(-\infty,\varrho_j(A\lambda))}(\cdot) \Big) = o({\epsilon_\lambda}^{-\frac{1}{m}}\, \lambda^{-\nu}),\quad \lambda\downarrow 0.
\ee
\end{lemma}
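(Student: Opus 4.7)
The plan is to split the cutoff $\mathbf{1}_{(-\infty,\varrho_j(A\lambda))}$ into a fixed bounded part, handled by Lemma~\ref{estiS2_a}, and a $\lambda$-dependent growing part, handled by a refined volume estimate that exploits the very slow (logarithmic) growth of $\varrho_j(A\lambda)$. Write $\hat P := \mathbf{1}_{(-\infty,K_0-1)}$ and $R := \mathbf{1}_{[K_0-1,\varrho_j(A\lambda))}$, so that $\mathbf{1}_{(-\infty,\varrho_j(A\lambda))} = \hat P + R$ is an orthogonal sum of projections. Since $\mathcal{W}_j = Q_j^*(W) Q_j(W) \geq 0$, the elementary inequality $(x+y)^*(x+y) \leq 2(x^*x + y^*y)$ applied with $x = \mathcal{W}_j^{1/2}\hat P$, $y = \mathcal{W}_j^{1/2} R$ yields
\begin{equation*}
\mathbf{1}_{(-\infty,\varrho_j(A\lambda))}\,\mathcal{W}_j\,\mathbf{1}_{(-\infty,\varrho_j(A\lambda))} \leq 2\hat P \mathcal{W}_j \hat P + 2 R \mathcal{W}_j R.
\end{equation*}
The Weyl inequality \eqref{lau11} then gives $n_+(\epsilon_\lambda;\cdot) \leq n_+(\epsilon_\lambda/4; \hat P \mathcal{W}_j \hat P) + n_+(\epsilon_\lambda/4; R \mathcal{W}_j R)$, and Lemma~\ref{estiS2_a} bounds the first summand by $O(\epsilon_\lambda^{-1/m}) = o(\epsilon_\lambda^{-1/m}\lambda^{-\nu})$ for every $\nu > 0$.

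For the second summand, I would pick $\tilde\chi_\lambda \in C^\infty(\R;[0,1])$ equal to $1$ on $[K_0-1,\varrho_j(A\lambda)]$ and supported in $[K_0-2,\varrho_j(A\lambda)+1]$, constructed from a fixed bump by translation so that $\|\tilde\chi_\lambda^{(\beta)}\|_\infty$ is uniformly bounded in $\lambda$ for every $\beta\in\N$. Since $R\tilde\chi_\lambda = R$, we have $R\mathcal{W}_jR = R(\tilde\chi_\lambda \mathcal{W}_j \tilde\chi_\lambda)R$, and the standard min-max argument (analogous to the one behind $n_+(s;PMP) \leq n_+(s;M)$) gives
\begin{equation*}
n_+(\epsilon_\lambda/4; R\mathcal{W}_jR) \leq n_+(\epsilon_\lambda/4; \tilde\chi_\lambda\mathcal{W}_j\tilde\chi_\lambda).
\end{equation*}
Next, by Proposition~\ref{shirai} together with the Moyal composition formula, $\tilde\chi_\lambda\mathcal{W}_j\tilde\chi_\lambda = Op^{W}(\sigma_\lambda)$ with a principal symbol $\tilde\chi_\lambda^2(x) w_j(x,\xi)$ (bounded by $C\mathbf{1}_{I_\lambda}(x)\langle x,\xi\rangle^{-m}$ with $I_\lambda := [K_0-2,\varrho_j(A\lambda)+1]$) plus Poisson-bracket and higher-order corrections lying in $\mathcal{S}_0^{m+1}$ and supported in the two unit-length transition layers at the ends of $I_\lambda$.

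Applying the envelope version of the volume bound underlying Lemma~\ref{estipsdo1} (the Dauge–Robert estimate from \cite{dauro}) to the principal part of $\sigma_\lambda$ then gives
\begin{equation*}
n_+(\epsilon_\lambda/4; Op^{W}(\tilde\chi_\lambda^2 w_j)) \leq C\,\mathrm{vol}\bigl\{(x,\xi) : x \in I_\lambda,\ \langle x,\xi\rangle \leq C'\epsilon_\lambda^{-1/m}\bigr\}.
\end{equation*}
The hypothesis $\epsilon_\lambda \geq \lambda^\theta$ with $\theta > 1$ forces $\epsilon_\lambda^{-1/m} \geq \lambda^{-\theta/m}$, which is polynomially large in $\lambda^{-1}$, whereas $\varrho_j(A\lambda) \asymp |\ln \lambda|^{1/2}$ by \eqref{ivri_asymp_2}; thus the $\xi$-slice for each $x \in I_\lambda$ has length $\lesssim \epsilon_\lambda^{-1/m}$, and the total volume is $\lesssim |I_\lambda|\,\epsilon_\lambda^{-1/m} \lesssim |\ln \lambda|^{1/2}\epsilon_\lambda^{-1/m}$. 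The correction terms, lying in $\mathcal{S}_0^{m+1}$ with $x$-support of bounded length, contribute only $O(\epsilon_\lambda^{-1/(m+1)})$, which is dominated. Since $|\ln \lambda|^{1/2} = o(\lambda^{-\nu})$ for every $\nu > 0$, the bound \eqref{may31_1} follows.

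The hardest point will be that a naive application of Lemma~\ref{estipsdo1} to $\tilde\chi_\lambda^2 w_j \in \mathcal{S}_0^m$ yields only the trivial $O(\epsilon_\lambda^{-2/m})$, because the cutoff $\tilde\chi_\lambda$ does not lie in any $\mathcal{S}_p^0$ with $p > 0$ uniformly in $\lambda$. To harvest the $|\ln \lambda|^{1/2}$-length support of $\tilde\chi_\lambda$ in the $x$-direction one must either invoke the finer envelope form of the Dauge–Robert volume estimate, or equivalently perform a unit-interval partition of unity in $x$ on each piece of which Lemma~\ref{estipsdo1} is applied with some $p > m$, the $O(|\ln \lambda|^{1/2})$ resulting pieces being recombined via \eqref{lau11} at the cost of an extra $|\ln\lambda|^{1/(2m)}$ factor that is still absorbed by $\lambda^{-\nu}$. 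One also has to keep track of the Moyal commutator corrections to make sure they remain in $\mathcal{S}_0^{m+1}$ with unit-length $x$-support, so that they never compete with the main contribution $\epsilon_\lambda^{-1/m}|\ln \lambda|^{1/2}$.
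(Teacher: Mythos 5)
Your decomposition into the fixed cutoff $\hat P={\bf 1}_{(-\infty,K_0-1)}$ and the growing part $R={\bf 1}_{[K_0-1,\varrho_j(A\lambda))}$, and the treatment of the $\hat P$-term via Lemma~\ref{estiS2_a}, agrees with what the paper does (the paper gets there by the Ky Fan inequality \eqref{lau13} rather than your $(x+y)^*(x+y)\leq 2(x^*x+y^*y)$ plus Weyl, with the same $\epsilon_\lambda/4$ outcome). The divergence, and the gap, is in the $R$-term. You correctly diagnose the obstruction --- uniformly in $\lambda$ the symbol $\tilde\chi_\lambda^2 w_j$ lies only in $\mathcal S_0^m$, so Lemma~\ref{estipsdo1} as stated gives only $O(\epsilon_\lambda^{-2/m})$ --- but neither of your proposed remedies is actually carried out. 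The ``envelope version'' of the Dauge--Robert estimate you invoke is not what \cite[Lemma~4.7]{dauro}, as used in the proof of Lemma~\ref{estipsdo1}, provides: it gives envelopes of the specific form $\langle x\rangle^{-p}\langle x,\xi\rangle^{-q}$ with a constant depending on the $\mathcal S_p^q$ semi-norms, and for $\tilde\chi_\lambda^2 w_j$ those semi-norms blow up in $\lambda$ for every $p>0$ because the $x$-support reaches $\varrho_j(A\lambda)\to\infty$. The partition-of-unity fallback is closer, but you do not address that the $\mathcal S_p^m$ semi-norms of the piece localised near $x=i$ grow like $\langle i\rangle^p$, nor how the constant in Lemma~\ref{estipsdo1} (which ``depends on a finite number of semi-norms'' in an unspecified way) absorbs that; and the claimed $|\ln\lambda|^{1/(2m)}$ overhead understates the Weyl-recombination cost over $N\asymp|\ln\lambda|^{1/2}$ pieces, which is $N^{1+1/m}$ (still a power of $|\ln\lambda|$, so the conclusion survives, but the accounting has to be done).

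The paper avoids all of this with a single elementary quadratic-form comparison in place of your dynamic cutoff $\tilde\chi_\lambda$. Since $(K_0-1,\varrho_j(A\lambda))$ is a bounded interval, one has ${\bf 1}_{(K_0-1,\varrho_j(A\lambda))}(k)\leq\bigl(\sup_{K_0-1<k<\varrho_j(A\lambda)}\langle k\rangle^{p}\bigr)\langle k\rangle^{-p}$ as multiplication operators, which after conjugating by $Q_j(W)$ and passing to $T^*T$ gives \eqref{ubS0}--\eqref{9aug16}: $n_+(\epsilon_\lambda;R\,\mathcal W_j\,R)\leq n_+\bigl(\epsilon_\lambda c_\lambda;\langle\cdot\rangle^{-p/2}\mathcal W_j\langle\cdot\rangle^{-p/2}\bigr)$, with $c_\lambda^{-1}$ a fixed power of $\varrho_j(A\lambda)\asymp|\ln\lambda|^{1/2}$. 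Now $\langle\cdot\rangle^{-p/2}\mathcal W_j\langle\cdot\rangle^{-p/2}$ is a single $\lambda$-independent operator in $Op^{W}(\mathcal S_p^m)$ (Proposition~\ref{shirai} plus composition), so Lemma~\ref{estipsdo1} with $p>m$ applies once and cleanly, yielding $O((\epsilon_\lambda c_\lambda)^{-1/m})=o(\epsilon_\lambda^{-1/m}\lambda^{-\nu})$. This comparison is what you are missing: it trades your $\lambda$-dependent cutoff and semi-norm bookkeeping for a polynomial-in-$|\ln\lambda|$ loss in the threshold, at which point the pseudodifferential input is fixed and uniform.
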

\begin{proof}
Thanks to \eqref{S0Vj} and the 
Ky Fan inequality
\bel{30jun16a}\ba{rl}
n_{+}\Big(\epsilon_\lambda ; {\bf 1}_{(-\infty, \varrho_j(A\lambda))}(\cdot)
\mathcal{W}_j \, {\bf 1}_{(-\infty, \varrho_j(A\lambda))}(\cdot) \Big)
\\
\leq & n_{+}\Big(\frac{\epsilon_\lambda}{4} ; {\bf 1}_{(-\infty, K_0-1)}(\cdot)\mathcal{W}_j \, {\bf 1}_{(-\infty,K_0-1)}(\cdot)\Big)\\[.7em]
+&n_{+}\Big(\frac{\epsilon_\lambda}{4} ; {\bf 1}_{(K_0-1,  \varrho_j(A\lambda))}(\cdot)\mathcal{W}_j \, {\bf 1}_{(K_0-1, \varrho_j(A\lambda))}(\cdot)\Big).
\ea
\ee
{The desired estimate} for the first term in the r.h.s. of  \eqref{30jun16a} follows from Lemma \ref{estiS2_a} {with $\epsilon_\lambda$ instead of $\lambda$}. In order to estimate the second term, we will need the following inequality (in the sense of quadratic forms). When $\varrho_j(I)$ is a bounded set, for any $p \geq 0$
\bel{ubS0}
Q_j(W)\, {\bf 1}_{\varrho_j(I)}(k) \, Q_j^*(W) \leq \Big( \sup_{k \in \varrho_j(I)}  \langle k \rangle^{p}   \Big)  \, 
Q_j(W) \, \langle \cdot \rangle^{-p} \,  Q_j(W)^*.
\ee
Consequently,  using \eqref{S0Vj}, that the postive eigenvalues of $TT^*$ coincide with those of $T^*T$ ({here we use it} for $T= Q_j(W)  {\bf 1}_{(K_0-1,  \varrho_j(A\lambda))}(\cdot) $ and for $T=   Q_j(W)  \langle \cdot \rangle^{-p/2}$), and    the min-max pinciple, we conclude that
\bel{9aug16}
\ba{ll}n_{+}\Big(\epsilon_\lambda ; {\bf 1}_{(K_0-1,  \varrho_j(A\lambda))}(\cdot)\mathcal{W}_j \, {\bf 1}_{(K_0-1, \varrho_j(A\lambda))}(\cdot)\Big)&=n_{+}\Big(\epsilon_\lambda ; Q_j(W)  {\bf 1}_{(K_0-1,  \varrho_j(A\lambda))}(\cdot)      Q_j(W)^*\Big)\\[.7em]
&\leq n_{+}\Big(\epsilon_\lambda c_\lambda ; Q_j(W)   \langle \cdot \rangle^{-p}    Q_j(W)^*\Big)\\[.7em]
&=n_+\Big(\epsilon_\lambda c_\lambda, \langle \cdot \rangle^{-p/2} \mathcal{W}_j \langle \cdot \rangle^{-p/2} \Big) , \ea
\ee 
where $c_\lambda=1/ \sup \{ \langle k \rangle^{p/2}; k\in (K_0-1, \varrho_j(A\lambda))    \}$. 
 Since $\langle k \rangle^{-p} \in \mathcal{S}_p^0$,  Proposition \ref{shirai}  together with  the composition formula give us that 
$\langle \cdot \rangle^{-p/2} \mathcal{W}_j \langle \cdot \rangle^{-p/2} =Op^W(a)$, where the  symbol $a\in\mathcal{S}_p^m$, and $p\geq0$. Using  Lemma \ref{estipsdo1}   with $p $ sufficiently large,  we obtain that $n_+(\lambda, \langle \cdot \rangle^{-p/2} \mathcal{W}_j \langle \cdot \rangle^{-p/2} ) = O ({\lambda}^{- \frac{1}{m}}),$ as  $ \lambda  \downarrow 0$.  {Combining this estimate with}  \eqref{9aug16} we  deduce 
\bel{30jun16c}
{ n_{+}\Big({\epsilon_\lambda} ; {\bf 1}_{(K_0-1,  \varrho_j(A\lambda))}(\cdot)\mathcal{W}_j \, {\bf 1}_{(K_0-1, \varrho_j(A\lambda))}(\cdot)\Big)
=O({(\epsilon_\lambda c_\lambda)}^{-\frac{1}{m}})}.
\ee
Now, regarding   \eqref{ivri_asymp_2},  ${c_\lambda}^{-\frac{1}{m}}= o(\lambda^{-\nu})$ for any  $\nu>0$, {and then, \eqref{30jun16c} implies  \eqref{may31_1}.}

\end{proof}

\begin{proposition}\label{asympS}
Let  $W\geq0$ be  in $\mathcal{S}_0^m$ and 
$\rm{supp}\, W\subset [K_0,\infty)\times\R$ for some $K_0\in \R$. 
Let $r>0$ and $r_\pm >0$ such that $r_- < r<r_+$.  Then for any $\nu>0$
\bel{may24_4} \ba{ll}&
n_+(r_+ \lambda,  \mathcal{W}_j) + {o(\lambda^{-\frac{1}{m}}\lambda^{-\nu})}\\ [.3em] 
\leq& n_{*}\Big(r^{\frac12};Q_j(W) \,  \, {\bf 1}_{(\varrho_j( \lambda-\epsilon_\lambda),\infty)}(\cdot) \,  \, |E_j(\cdot)-{\mathcal E}_j-\lambda|^{-1/2}\Big)\\[.3em]
\leq &
n_+(r_- \lambda,  \mathcal{W}_j) + { o(\epsilon_\lambda^{-\frac{1}{m}}\lambda^{-\nu}),} \quad \lambda \downarrow 0,
\ea\ee
for $\epsilon_\lambda \in (\lambda^\theta, \delta \lambda)$, $\theta>1$ and $ \delta  \in (0,1)$.
\end{proposition}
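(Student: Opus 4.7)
The plan is to work with $T := Q_j(W)\,\mathbf{1}_I(\cdot)\,f(\cdot)$, where $I := (\varrho_j(\lambda-\epsilon_\lambda),\infty)$ and $f(k):=|E_j(k)-\mathcal{E}_j-\lambda|^{-1/2}$, and to estimate $n_*(r^{1/2};T)=n_+(r;T^*T)$ with $T^*T=f\,\mathbf{1}_I\,\mathcal{W}_j\,\mathbf{1}_I\,f$ (using \eqref{SS0} and \eqref{S0Vj}). The driving observation is the contrasting behaviour of $f$ on $I$: on the sub-set $\chi_1:=\mathbf{1}_{(\varrho_j(A\lambda),\infty)}$, where $E_j(k)-\mathcal{E}_j\le A\lambda$, one has $\lambda^{-1}\le f(k)^2\le(1-A)^{-1}\lambda^{-1}$, whereas on $\chi_2:=\mathbf{1}_I-\chi_1$ (where $A\lambda\le E_j(k)-\mathcal{E}_j\le\lambda-\epsilon_\lambda$) one only has $f^2\le\epsilon_\lambda^{-1}$. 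The parameter $A\in(0,1-\delta)$ is a small positive constant, fixed later. The key operator-theoretic tool will be the cyclic identity $n_+(s;g\,\mathcal{W}_j\,g)=n_+(s;\mathcal{W}_j^{1/2}g^2\mathcal{W}_j^{1/2})$ combined with the monotonicity of $n_+$ under inequalities between positive self-adjoint compact operators; together they convert pointwise bounds on $f^2$ into rescalings of the spectral parameter.

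For the lower bound I would avoid splitting $T$. Since $\mathbf{1}_I f^2\mathbf{1}_I\ge\lambda^{-1}\mathbf{1}_I$, sandwiching by $\mathcal{W}_j^{1/2}$ and using cyclicity gives $n_+(r;T^*T)\ge n_+(r\lambda;\mathbf{1}_I\mathcal{W}_j\mathbf{1}_I)$. The decomposition $\mathcal{W}_j=\mathcal{W}_j^{1/2}\mathbf{1}_I\mathcal{W}_j^{1/2}+\mathcal{W}_j^{1/2}(1-\mathbf{1}_I)\mathcal{W}_j^{1/2}$ together with the Weyl inequality \eqref{lau11} split at $r_+\lambda=r\lambda+(r_+-r)\lambda$ yields
\[
n_+(r\lambda;\mathbf{1}_I\mathcal{W}_j\mathbf{1}_I)\ge n_+(r_+\lambda;\mathcal{W}_j)-n_+\!\bigl((r_+-r)\lambda;\,(1-\mathbf{1}_I)\mathcal{W}_j(1-\mathbf{1}_I)\bigr).
\]
Because $\lambda-\epsilon_\lambda>A\lambda$ for $A$ small enough (and independently of $\lambda$), $(1-\mathbf{1}_I)\le\mathbf{1}_{(-\infty,\varrho_j(A\lambda))}$. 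Monotonicity plus Lemma \ref{estiS2} applied at scale $\min\bigl((r_+-r)\lambda,\,\delta\lambda/2\bigr)$ then bounds the last term by $o(\lambda^{-1/m-\nu})$, delivering the lower bound.

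For the upper bound I would split $T=T_1+T_2$ with $T_i:=Q_j(W)\chi_i f$ and apply the Ky Fan inequality \eqref{lau13}: $n_*(s_1+s_2;T)\le n_*(s_1;T_1)+n_*(s_2;T_2)$. On $\chi_1$, the bound $f^2\le(1-A)^{-1}\lambda^{-1}$ combined with cyclicity and $\mathcal{W}_j^{1/2}\chi_1\mathcal{W}_j^{1/2}\le\mathcal{W}_j$ gives $n_*(s_1;T_1)\le n_+(s_1^2(1-A)\lambda;\mathcal{W}_j)$. Setting $s_1:=(r_-/(1-A))^{1/2}$ makes $s_1^2(1-A)=r_-$, and taking $A<(r-r_-)/r$ forces $s_1<r^{1/2}$, so $s_2:=r^{1/2}-s_1>0$. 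For $T_2$, the inclusion $\chi_2\le\mathbf{1}_{(-\infty,\varrho_j(A\lambda))}$ and the bound $f^2\le\epsilon_\lambda^{-1}$ on $\chi_2$, together with the same cyclic/monotonicity chain and Lemma \ref{estiS2} applied at scale $s_2^2\epsilon_\lambda$, yield $n_*(s_2;T_2)=o(\epsilon_\lambda^{-1/m}\lambda^{-\nu})$. Summing the two estimates gives the claimed upper bound.

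The main technical obstacle is really the bookkeeping of auxiliary constants: one has to check at each invocation of Lemma \ref{estiS2} that the scale being used (e.g.\ $(r_+-r)\lambda$ or $s_2^2\epsilon_\lambda$) lies in the admissible interval $(\lambda^{\theta'},\delta'\lambda)$, after possibly dominating by $\delta\lambda/2$ when the scale exceeds $\delta\lambda$; and one has to choose $A>0$ small enough to absorb the factor $(1-A)$ into the fixed margins $r-r_-$ and $r_+-r$. These are systematic verifications rather than deep issues once the cyclic/monotonicity calculus above is in place.
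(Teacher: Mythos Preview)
Your proposal is correct and follows essentially the same approach as the paper: for the upper bound you split the interval at a threshold $A\lambda$ (the paper reuses the symbol $\delta$ for this split point), bound $f^2$ separately on each piece, apply the Ky Fan inequality, and invoke Lemma~\ref{estiS2} on the ``bad'' piece; for the lower bound you use $f^2\ge\lambda^{-1}$ on all of $I$, then Weyl-split $\mathcal{W}_j$ against $\mathbf{1}_I\mathcal{W}_j\mathbf{1}_I$ and control the complement via Lemma~\ref{estiS2}. Your explicit use of the cyclic identity $n_+(s;g\mathcal{W}_jg)=n_+(s;\mathcal{W}_j^{1/2}g^2\mathcal{W}_j^{1/2})$ and your introduction of an independent parameter $A$ (rather than overloading the given $\delta$) make the constant-tracking slightly cleaner than in the paper, but the substance is identical.
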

\begin{proof}
For the upper bound, consider the inequalities  $|E_j(k)-{\mathcal E}_j-\lambda| \geq (1-\delta) \lambda,$ valid for $k \in (\varrho_j(\delta \lambda ), \infty)$, and $|E_j(k)-{\mathcal E}_j-\lambda| \geq \epsilon_\lambda,$ valid for $k \in (\varrho_j(\lambda-\epsilon_\lambda ), \varrho_j(\delta \lambda ))$. Then,  for  $r_1,r_2>0$ the Ky Fan inequality and the min-max principle imply
\bel{june1_4}\ba{ll}
&n_{*}\Big(r_1+r_2;Q_j(W) \,  \, {\bf 1}_{(\varrho_j( \lambda-\epsilon_\lambda),\infty)}(\cdot) \,  \, |E_j(\cdot)-{\mathcal E}_j-\lambda|^{-1/2}\Big) \\[.5em]
\leq & n_{*}\Big(r_1 \epsilon_\lambda^{1/2};Q_j(W) \,  \, {\bf 1}_{(\varrho_j(\lambda-\epsilon_\lambda ), \varrho_j( \delta\lambda) )}(\cdot)\Big)\\[.5em]
+& n_{*}\Big(r_2(1-\delta)^{1/2}\lambda^{1/2} ;Q_j(W) \,  \, {\bf 1}_{(\varrho_j(\delta\lambda),\infty)}(\cdot) \Big).
\ea
\ee
{The min-max principle together with \eqref{S0Vj} and Lemma \ref{estiS2} imply
\bel{10aug16}n_{*}\Big( \epsilon_\lambda^{1/2};Q_j(W) \,  \, {\bf 1}_{(\varrho_j(\lambda-\epsilon_\lambda ), \varrho_j( \delta\lambda) )}(\cdot)\Big)\leq
n_{*}\Big(\epsilon_\lambda^{1/2};Q_j(W) \,  \, {\bf 1}_{(-\infty, \varrho_j( \delta \lambda) )}(\cdot)\Big) =o( \epsilon_\lambda^{-\frac{1}{m}}\lambda^{-\nu}),
\ee
for any $\nu>0$, and 
\bel{13jul16}
n_{*}\Big(r_2 (1-\delta)^{1/2}\lambda^{1/2} ;Q_j(W) \,  \, {\bf 1}_{(\varrho_j(\delta\lambda),\infty)}(\cdot) \Big)
\leq
n_+(r_2^2 (1-\delta) \lambda ;  \mathcal{W}_j).
\ee
Inequalities \eqref{june1_4}-\eqref{13jul16}  give us the upper bound.}

Concerning the lower bound, we use that  for $k \in (\varrho_j(\lambda - \epsilon_\lambda), \infty )$ the band function satisfies $|E_j(k)-{\mathcal E}_j-\lambda| =   \lambda- (E_j(k)-{\mathcal E}_j) < \lambda,$ 
then 
\bel{june1_2}
n_{*}\big(r^{1/2};Q_j(W) \,  \, {\bf 1}_{(\varrho_j( \lambda-\epsilon_\lambda),\infty)}(\cdot) \,  \, |E_j(\cdot)-{\mathcal E}_j-\lambda|^{-1/2} \big) \geq 
n_*\big( {r^{1/2}{\lambda}^{1/2}};  Q_j(W) \,  \, {\bf 1}_{(\varrho_j( \lambda-\epsilon_\lambda),\infty)}(\cdot)  \big).
\ee

{Gathering the Weyl inequalities \eqref{lau11} with \eqref{S0Vj}, we have
$$
\ba{ll}
n_*\Big( {r^{1/2}{\lambda}^{1/2}};  Q_j(W) \,  \, {\bf 1}_{(\varrho_j( \lambda-\epsilon_\lambda),\infty)}(\cdot)  \Big)  &\geq  n_+(r_+ \lambda ;  \mathcal{W}_j) \\ 
   &- n_*\Big( {(r_+-r)^{1/2}{\lambda}^{1/2}};  Q_j(W) \,  \, {\bf 1}_{(- \infty, \varrho_j( \lambda-\epsilon_\lambda))}(\cdot)  \Big).
 \ea 
 $$
Then, since $\varrho_j( \lambda-\epsilon_\lambda) \leq \varrho_j( \lambda(1-\delta))$, {Lemma}  \ref{estiS2}  implies the lower bound.}

\end{proof}

\subsection{Proof of Theorem \ref{V_power}}\label{proof_V_power}

{Let us recall  that,} for $\lambda >0$,
$\epsilon_\lambda \in (\lambda^\theta, \delta \lambda)$, $\theta>1$ and  $ \delta  \in (0,1)$, {${\rm Re}\,T_j({\mathcal E}_j+\lambda)$ is given by (see \eqref{decomp+}):}
$$\mbox{p.v.}\displaystyle{\int_{\lambda -\epsilon_\lambda }^{\lambda+ \epsilon_\lambda}
\frac{G_{j}(\varrho_j(s))^*G_{j}(\varrho_j(s))}{{\lambda-s}}\varrho_j'(s)ds}
+ S_{j} S_{j} ^* [\lambda, (\lambda+ \epsilon_\lambda ,  \infty) ] - S_{j} S_{j} ^* [\lambda, (0, \lambda-\epsilon_\lambda ) ].
$$
Accordingly,  the Weyl inequalities \eqref{lau11},  Lemma \ref{estiPV} and Lemma \ref{estiS1}, imply that  for any $\rho\in (0,1)$
\bel{11_jul_16}\ba{ll}&
n_+ (r(1+\rho) , S_{j, \infty} S_{j, \infty} ^* [\lambda, (0, \lambda- \epsilon_\lambda ) ] )
+O\left(\frac{\epsilon_\lambda}{\lambda^2 |\ln \lambda|^{\frac12}}\right)\\
&{-  n_+} (r\rho/2 , S_{j, \infty} S_{j, \infty} ^* [\lambda, (\lambda+ \epsilon_\lambda ,  \infty) ] ) \\\leq&
n_-( r, {\rm Re}\,T_j({\mathcal E}_j+\lambda) ) \\\leq &
n_+ (r(1-\rho) , S_{j, \infty} S_{j, \infty} ^* [\lambda, (0, \lambda- \epsilon_\lambda ) ] )+O\left(\frac{\epsilon_\lambda}{\lambda^2 |\ln \lambda|^{\frac12}}\right)
 .
\ea
\ee

{Using} \eqref{SS0},  exploiting that for $k \in \varrho_j(\lambda+ \epsilon_\lambda ,  \infty)$, $|E_j(k)-{\mathcal E}_j-\lambda | \geq \epsilon_\lambda$, and  following  the proof of Lemma \ref{estiS2}, we obtain
\bel{12sept}\ba{ll}
n_+ (r , S_{j, \infty} S_{j, \infty} ^* [\lambda, (\lambda+ \epsilon_\lambda ,  \infty) ] ) 
= o({\epsilon_\lambda}^{-\frac{1}{m}}\, \lambda^{-\nu}),\quad \lambda\downarrow 0,\ea\ee
for any $\nu>0$. {This together with Theorem \ref{firsteh}, \eqref{11_jul_16},  and choosing $\epsilon_\lambda =\lambda^{\theta}$, with $\theta=2-2/m$, imply
\bel{dec23z}\ba{ll}&
n_+ (r(1+\rho) , S_{j, \infty} S_{j, \infty} ^* [\lambda, (0, \lambda- \epsilon_\lambda ) ] )+o(\lambda^{-2/m})\\
\leq & \pm \xi(\mathcal{E}_j\pm\lambda;H_+,H_0) \\ \leq & n_+ (r(1-\rho) , S_{j, \infty} S_{j, \infty} ^* [\lambda, (0, \lambda- \epsilon_\lambda ) ] )+o(\lambda^{-2/m}),\ea\ee
as $\lambda \downarrow 0.$}

Now, consider two smooth functions $V^\pm$, defined on $\R^2$, such that they  satisfy \eqref{12apr17},  $ 0\leq V^- \leq  \widetilde{V} \leq V^+$ and the differences  $\widetilde{V}- V^\pm$ are compactly supported in the $x$-direction. Then,  for the volume functions defined in   \eqref{apr20} and \eqref{apr20b} it is easy to prove that  
\bel{30jun16}\widetilde{N}(\lambda, V^\pm)= \widetilde{N}(\lambda,  \widetilde{V}) +O (\lambda^{-\frac{1}{m}}) = N(\lambda,  V) + O (\lambda^{-\frac{1}{m}}). \ee
In particular, $\widetilde{N}(\lambda, V^\pm)$ satisfy {\eqref{12apr17}\rm{b}}  and {\eqref{12apr17}\rm{c}.}  Therefore,  for any $\rho\in(0,1)$
\bel{29jul16a}
\lim_{\rho\downarrow 0}\frac{\widetilde{N}(\lambda(1\pm\rho),V^\pm)}{\widetilde{N}(\lambda,V^\pm)}=1.\ee}
{Further, an easy computation  shows that}  
\bel{S0Vj2}
\mathcal{V}_j^-  \leq   \mathcal{V}_j  \leq   \mathcal{V}_j^+ ,\ee
{where $\mathcal{V}_j^\pm:= {Q_j}^*(V^\pm) \, Q_j(V^\pm)$, $\mathcal{V}_j:= {Q_j}^*(\widetilde{V}) \, Q_j(\widetilde{V})$ 
are  operators defined 
as in \eqref{S0Vj}. }

In consequence, taking into account \eqref{dec23z},  \eqref{SS0},  {Proposition} \ref{asympS}, \eqref{S0Vj}  and \eqref{S0Vj2},  
we obtain that  for all $\rho \in (0,1)$, 
 \bel{dec23}
 n_+(\lambda (1+\rho) ;\mathcal{V}_j^-)+o(\lambda^{-2/m})
\leq \pm \xi(\mathcal{E}_j\pm\lambda;H_+,H_0) \leq  n_+(\lambda (1-\rho) ;\mathcal{V}_j^+)+o(\lambda^{-2/m}),\ee
as $\lambda \downarrow 0.$
To finish he proof of Theorem \ref{V_power} we need the following result.

 \begin{proposition}\label{asympsdo1} \cite[Theorem 1.3]{dauro}
Let $a \in \mathcal{S}_0^m$ be a real valued symbol, with $m>0$.
Assume that the volume functions $\widetilde{N}(\lambda, \pm a)$ defined by \eqref{apr20b} satisfy {\eqref{12apr17}\rm{b},}   {\eqref{12apr17}\rm{c}}. 
Then, there exists $\nu>0$ such that, as $\lambda  \downarrow 0$,  the counting functions satisfy
$$n_\pm(\lambda, Op^{W}(a))   =  \widetilde{N}(\lambda, \pm a) (1 + O( \lambda^{\nu})).$$
\end{proposition}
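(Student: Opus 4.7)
The plan is to sandwich $Op^W(a)$ between operators whose spectral counting functions can be computed explicitly via a microlocal partition of unity, and then turn the resulting qualitative bracketing into a sharp asymptotic with polynomial remainder by exploiting condition \eqref{12apr17}c. By applying the result to $-a$ it suffices to treat $n_+$. Since $a\in\mathcal{S}_0^m$ with $m>0$, the set $\{|a|>\lambda\}$ is contained in a disc of radius $\lesssim\lambda^{-1/m}$, so the natural semiclassical parameter is $h:=\lambda^{1/m}$; after rescaling phase space by $h^{-1}$, the operator $\lambda^{-1}Op^W(a)$ becomes a semiclassical pseudodifferential operator whose spectral count at the threshold $1$ is what we must estimate.

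Next I would fix an auxiliary parameter $\epsilon=\epsilon_\lambda\downarrow 0$ (a small power of $\lambda$) and build a Weyl partition of unity $\chi_-^2+\chi_0^2+\chi_+^2\equiv 1$ on phase space, with $\chi_\pm$ supported in $\{\pm(a-\lambda)>\epsilon\lambda\}$ and $\chi_0$ supported in a thin neighbourhood of the level set $\{a=\lambda\}$, whose derivatives are controlled at scale $(\epsilon\lambda)^{-1}$ in the rescaled metric. Symbolic calculus yields
$$Op^W(a)=\sum_{\sigma\in\{-,0,+\}}Op^W(\chi_\sigma)\,Op^W(a)\,Op^W(\chi_\sigma)+\mathcal{R}_\lambda,$$
where the commutator remainder $\mathcal{R}_\lambda$ has operator norm $O(\lambda^N)$ for any $N$ once $\epsilon_\lambda$ is a suitably large power of $h$. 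Sharp G\aa{}rding's inequality on the $\chi_-$ piece shows it contributes no eigenvalues above $\lambda$ in the limit; the $\chi_+$ piece is bounded above by $\|a\|_\infty\,Op^W(\chi_+^2)$, so it contributes at most $\mathrm{rank}\,Op^W(\chi_+^2)\asymp\widetilde N(\lambda(1-\epsilon_\lambda),a)+o(\lambda^{-2/m})$ eigenvalues above $\lambda$; the central piece is a pseudodifferential operator with symbol supported in the shell $\{\lambda(1-\epsilon_\lambda)<a<\lambda(1+\epsilon_\lambda)\}$, and Lemma \ref{estipsdo1} applied to its symbol bounds its counting function by the volume of that shell. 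For the matching lower bound, one constructs nearly orthogonal coherent states $\{\phi_{(x_k,\xi_k)}\}$ centred at a unit-density lattice inside $\{a>\lambda(1+\epsilon_\lambda)\}$: Weyl calculus gives $\langle\phi_{(x_k,\xi_k)},Op^W(a)\phi_{(x_k,\xi_k)}\rangle=a(x_k,\xi_k)+O(\epsilon_\lambda\lambda)>\lambda$, off-diagonal entries decay rapidly, and the min-max principle combined with Temple's inequality produce at least $\widetilde N(\lambda(1+\epsilon_\lambda),a)(1+o(1))$ eigenvalues above $\lambda$.

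Combining the two sides yields
$$\widetilde N(\lambda(1+\epsilon_\lambda),a)+o(\lambda^{-2/m})\le n_+(\lambda,Op^W(a))\le\widetilde N(\lambda(1-\epsilon_\lambda),a)+o(\lambda^{-2/m}),$$
and it remains to compare $\widetilde N(\lambda(1\pm\epsilon_\lambda),a)$ with $\widetilde N(\lambda,a)$. The principal obstacle lies precisely here: condition \eqref{12apr17}c is purely qualitative, but the conclusion demands a polynomial remainder. The standard way to close this gap is to exploit the smoothness $a\in\mathcal{S}_0^m$ to upgrade \eqref{12apr17}c into a quantitative H\"older bound $\widetilde N(\lambda(1\pm\epsilon),a)-\widetilde N(\lambda,a)=O(\epsilon^\alpha\,\lambda^{-2/m})$ for some $\alpha>0$, via a microlocal study of the Hamilton field of $a$ near its regular level sets (the critical-value contribution being of strictly lower order by a Sard-type argument). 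With such a refinement in hand, choosing $\epsilon_\lambda=\lambda^{\nu'}$ with $\nu'$ small and balancing against the $O(\lambda^N)$ Weyl remainder produces the statement with any $\nu\in(0,\nu'\alpha)$. This quantitative upgrade, rather than the bracketing itself, is the technical heart of the argument in \cite{dauro}.
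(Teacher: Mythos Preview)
The paper does not prove this proposition at all: it is simply quoted, with attribution, as Theorem~1.3 of \cite{dauro} and used as a black box in the final step of the proof of Theorem~\ref{V_power}. So there is no ``paper's own proof'' to compare against.

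Your sketch is a reasonable caricature of the Dauge--Robert method (a Weyl bracketing via a microlocal partition of unity adapted to the level set $\{a=\lambda\}$, with coherent-state test functions for the lower bound), and you correctly locate the real difficulty in passing from the qualitative hypothesis \eqref{12apr17}c to a polynomial remainder $O(\lambda^{\nu})$. Two caveats. First, several steps are stated too loosely to count as a proof: the claim that the commutator remainder $\mathcal{R}_\lambda$ has norm $O(\lambda^N)$ for \emph{any} $N$ requires the cut-offs $\chi_\sigma$ to lie in a good symbol class, but their derivatives blow up like $(\epsilon_\lambda\lambda)^{-1}$, so one only gains a fixed positive power of $\lambda$ per commutator, not arbitrarily many; similarly, bounding the rank of $Op^W(\chi_+^2)$ by the volume of its support is not automatic and needs a separate argument. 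Second, your last paragraph essentially says ``the hard part is done in \cite{dauro}'', which is exactly what the paper does by citing the result outright---so your proposal and the paper agree in spirit, namely that this is an external input rather than something to be reproved here.
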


Hence, putting together \eqref{dec23}, Propositions \ref{shirai}, \ref{asympsdo1}, the Weyl inequalities, Lemma \ref{estipsdo1}, \eqref{30jun16} and \eqref{29jul16a},  we obtain \eqref{the4}.

\section*{ Acknowledgments}  The  authors thank Georgi  Raikov for suggesting this problem and useful discussions.

\vspace{1pt}

\noindent V. Bruneau was partially supported by ANR 2011 BS01019 01.

\noindent P. Miranda was partially supported by CONICYT FONDECYT Iniciaci\'on 11150865.

\bibliographystyle{plain}
\bibliography{biblio, bilbio}

\end{document}